\documentclass[11pt]{amsart}
\usepackage{amssymb,amsmath,mathtools}
\usepackage{enumerate}
\usepackage{xcolor}
\usepackage{caption}
\usepackage[centering]{geometry}
\usepackage{url}
\usepackage{hyperref}

\usepackage{graphicx}
\usepackage{color}
\usepackage{amsmath}
\usepackage{amsfonts}
\usepackage{amssymb}
\usepackage{amscd}
\usepackage{framed}
\usepackage{bbm}
\usepackage[most]{tcolorbox}

\usepackage{amsthm}

\newcommand{\R}{\mathbb{R}}
\newcommand{\inr}[1]{\left\langle #1 \right\rangle}

\newcommand{\W}{\mathcal{W}}

\newtheorem{theorem}{Theorem}[section]

\newtheorem{lemma}[theorem]{Lemma}
\newtheorem{corollary}[theorem]{Corollary}
\theoremstyle{definition}
\newtheorem{definition}[theorem]{Definition}
\newtheorem{assumption}[theorem]{Assumption}

\newtheorem{remark}[theorem]{Remark}

\newcommand{\E}{\mathbb{E}}
\renewcommand{\P}{\mathbb{P}}
\newcommand{\C}{\mathrm{\mathbb{C}ov}}

\renewcommand{\tilde}{\widetilde}

\numberwithin{equation}{section}

\begin{document}

\title{On the structure of marginals in high dimensions}

\author{Daniel Bartl}
\address{Department of Mathematics, Department of Statistics and Data Science, National University of Singapore}
\email{bartld@nus.edu.sg}
\author{Shahar Mendelson}
\address{Department of Mathematics,  Texas A\&M University}
\email{shahar.mendelson@gmail.com}
\date{\today}

\begin{abstract}
Let $G, G_1,\dots,G_N$ be independent copies of a standard gaussian random vector in $\mathbb{R}^d$ and denote by $\Gamma = \sum_{i=1}^N  \langle G_i,\cdot\rangle e_i$ the standard gaussian ensemble. We show that, for any set $A\subset S^{d-1}$, with exponentially high probability,
\[
\sup_{x\in A} \frac{1}{N}\sum_{i=1}^N \big| (\Gamma x)^\sharp_i - q_i\big|
\le  c \frac{ \mathbb{E} \sup_{x\in A} \langle G,x\rangle  +  \log^2N }{\sqrt N }.
\]
Here each $q_i$ is  the $\frac{i}{N+1}$--quantile of the standard normal distribution and $(\Gamma x)^\sharp $ denotes the monotone increasing rearrangement of the vector $\Gamma x$.
The estimate is sharp up to a possible logarithmic factor and significantly extends previously known bounds. Moreover, we show that similar estimates hold in much greater generality: after replacing the gaussian quantiles by the appropriate ones, the same phenomenon persists for a broad class of random vectors.
\end{abstract}

\maketitle
\setcounter{equation}{0}
\setcounter{tocdepth}{1}

\section{Introduction}

The problem we explore here revolves around \emph{structure preservation}: how much structure is preserved by i.i.d.\ sampling. 
More accurately, let $\mathcal{F} \subset L_2(\mu)$ be a class of mean-zero functions and set $\sigma=(X_1,...,X_N)$ to be distributed according to $\mu^{\otimes N}$. 
Given the sample $\sigma$, each $f \in \mathcal{F}$ is associated with the random vector $P_\sigma f = \left(f(X_1),...,f(X_N)\right) \in \R^N$, and the question is whether  $P_\sigma f$ captures key features of $f$ --- uniformly in the class $\mathcal{F}$. 

To give a flavour of why this question is of interest, let $A \subset \R^d$, identify $x \in A$ with the linear functional $f_x=\inr{\cdot,x}$ and set $\mathcal{F}_A=\{\inr{\cdot,x} : x \in A\}$. 
Consider a centred random vector $X$ in $\R^d$ and let $\Gamma = \sum_{i=1}^N \inr{X_i,\cdot}e_i$ be the random matrix whose rows are $X_1,...,X_N$ --- independent copies of $X$. 
In this case,  $P_\sigma f_x = \Gamma x$, and the question is whether the elements of $\Gamma A =\{\Gamma x : x \in A\}$ capture the essence of the points in $A$; when that happens, the random image $\Gamma A$ inherits much of $A$'s geometric structure. 

Let us stress that even in this simplified setup, and in the best of scenarios --- when $X$ is the standard gaussian random vector in $\R^d$---, the interplay between $\Gamma A$ and $A$ is not fully understood. Accurate information is known only for notions of structure preservation that are `coarse'. 
For example, the ideal result for what is arguably the most natural notion of structure preservation---the $L_2$ sense---implies that with high probability with respect to $\mu^{\otimes N}$, for every $f \in \mathcal{F}$,
\begin{equation} \label{eq:L-2-iso-intro}
\left|\frac{1}{N}\sum_{i=1}^N f^2(X_i)  - \E f^2 \right| 
\leq c \sup_{f \in \mathcal{F}} \|f\|_{L_2} \cdot \frac{{\rm Comp}(\mathcal{F})}{\sqrt{N}}.
\end{equation}
Here, $c$ is an absolute constant and ${\rm Comp}(\mathcal{F})$ captures the complexity of the class $\mathcal{F}$ in some appropriate sense.
Taking into account the behaviour of the supremum as $N \to \infty$, one would hope that 
${\rm Comp}(F) $ is the expectation of the supremum of the canonical gaussian process indexed by $\mathcal{F}$; but for finite $N$ such a gaussian bound is often false.

The reason that \eqref{eq:L-2-iso-intro} is coarse is because it does not say much about the location of each $\frac{1}{\sqrt N} P_\sigma f$---only that for a typical sample it `lives' in an annulus of radii $\|f\|_{L_2} \pm  \sup_{f \in \mathcal{F}} \|f\|_{L_2}\frac{{\rm Comp}(\mathcal{F})}{\sqrt{N}}$, nothing more.

\begin{remark}
One should keep in mind that `coarse' does not mean `easy' or `pointless'. Establishing \eqref{eq:L-2-iso-intro} (and doing so with the right notion of ${\rm Comp}(\mathcal{F})$) is a notoriously difficult problem with a wide variety of important applications---see, for example, \cite{koltchinskii2017concentration,mendelson2010empirical,mendelson2007reconstruction}.
In fact, satisfactory versions of \eqref{eq:L-2-iso-intro} exist either when the class $\mathcal{F}$ consists of light-tailed random variables---e.g., exhibiting a subgaussian tail decay, or when $\mathcal{F}$ has plenty of symmetries. The most natural example of the latter is $\mathcal{F}_{S^{d-1}} = \{ \inr{\cdot,x} : x \in S^{d-1}\}$, and then \eqref{eq:L-2-iso-intro} captures the extremal singular values of the random matrix $\frac{1}{\sqrt N}\Gamma= \frac{1}{\sqrt{N}}\sum_{i=1}^{N}\inr{X_i,\cdot}e_{i}$;  see \cite{adamczak2010quantitative,bai1993limit,guedon2017interval,mendelson2014singular,tikhomirov2018sample} and the  references therein for some relatively recent results. 
\end{remark}

The notion of `structure preservation' we are interested in here is distributional: that the distribution of the coordinates of each vector $P_\sigma f$ resembles the true distribution of $f$, uniformly in $\mathcal{F}$. And since the joint distribution of the sample $(X_1,\ldots,X_N)$ is invariant under coordinate permutations, it stands to reason that any meaningful notion of `distributional structure preservation' should be formulated up to permutations. Thus, the shape of the vectors $P_\sigma f$ should be studied after rearranging their coordinates, rather than following a specific ordering.

With that in mind, for a vector $v=(v_i)_{i=1}^N\in\R^N$ denote by $v^\sharp$ its monotone nondecreasing rearrangement; in particular, $v^\sharp_1=\min_i v_i$ and $v^\sharp_N=\max_i v_i$. 
With minor abuse of notation, put $f^\sharp(X_i) = v^\sharp_i$ for $v=(f(X_i))_{i=1}^N$.
Setting $(Q_f(u))_{u\in(0,1)}$ to be the quantiles of $f(X)$, the question is whether each reordered vector $(f^\sharp(X_i))_{i=1}^N$ captures $(Q_f(u))_{u\in(0,1)}$, uniformly in $\mathcal{F}$.  

The asymptotic picture for a single random variable is well understood when the distribution of $f(X)$ is sufficiently regular---for example, when $f(X)$ is gaussian. In such cases it is standard to verify (see, e.g. \cite{walker1968note}) that almost surely, as $N\to\infty$,
\[
f^\sharp(X_{\lfloor uN\rfloor}) \to Q_f(u) \qquad \text{for every } u\in(0,1).
\]

\vskip0.3cm
Unfortunately, even if this behaviour passes to a uniform estimate in $\mathcal{F}$ (as is the case under minimal assumptions), this is an asymptotic estimate and provides little information on how well the finite-dimensional geometry of the vectors $\left\{(f^\sharp(X_i))_{i=1}^N :  f \in \mathcal{F}\right\}$ approximate the profiles of the true quantiles. 

As we explain in what follows, existing non-asymptotic results are less than satisfactory. There are known quantitative estimates only for highly structured classes; while in more general scenarios---even when $X$ is a standard gaussian vector in $\R^d$ and $\mathcal{F}_A$ consists of linear functionals indexed by a set $A\subset S^{d-1}$---, what is known is far from sharp.

Our main result is an optimal non-asymptotic bound that holds under minimal assumptions on the class and the underlying measure. Rather than introducing the result in full generality, it is instructive to see what happens in the gaussian case, and by how much the new estimate improves the current state of the art. 

To formulate the gaussian version of our main result, let $F_g(t)=\int_{-\infty}^t \frac{1}{\sqrt{2\pi}} e^{-s^2/2}\,ds$ be the standard gaussian distribution function, put $Q_g=F_{g}^{-1}$, and set $q\in\R^N$ by
\[
q_i = Q_g\!\left(\frac{i}{N+1}\right), \qquad i=1,\ldots,N.
\]
Consider $G$, the standard gaussian vector in $\R^d$, let $X_1,\ldots,X_N$ be independent copies of $G$, and put $\Gamma$ to be the random matrix whose rows are $X_1,\ldots,X_N$.

\begin{tcolorbox}
\begin{theorem}
\label{thm:main.intro}
There exist absolute constants $c_1,c_2>0$ such that the following holds. For every set $A\subset S^{d-1}$, if
\begin{align}
\label{eq:rest.Delta}
\Delta \ge c_1 \frac{\max\left\{ (  \E \sup_{x\in A} \inr{G,x} )^2, \log^4 (N) \right\}}{N},
\end{align}
then with probability at least $1-\exp(-c_2\Delta N)$,
\begin{align}
\label{eq:structure.Gamma.via.L1}
\sup_{x\in A} \frac{1}{N}\sum_{i=1}^N \big|(\Gamma x)^\sharp_i - q_i\big|
\le \sqrt{\Delta}.
\end{align}
\end{theorem}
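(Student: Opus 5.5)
The plan is to rewrite the $\ell_1$ distance between sorted vectors as an integral of a difference of distribution functions, and then control that difference uniformly in $x$ by a chaining argument. For a single $x\in S^{d-1}$, the coordinates of $\Gamma x$ are i.i.d.\ standard gaussians. Let $F_{N,x}$ be the empirical distribution function of $\{(\Gamma x)_i\}$, and let $\hat F$ be the distribution function of the uniform measure on $\{q_i\}$. A standard fact for monotone rearrangements is
\[
\frac1N\sum_{i=1}^N |(\Gamma x)^\sharp_i - q_i| = \int_{\R} |F_{N,x}(t)-\hat F(t)|\,dt ,
\]
which is the $W_1$ distance between the two empirical measures. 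The term $\int|\hat F-F_g|$ is deterministic and of order $\log N/N$, which is well within $\sqrt\Delta$. So it suffices to bound
\[
\sup_{x\in A}\int_{\R}|F_{N,x}(t)-F_g(t)|\,dt .
\]

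I will split the integral into a bulk region $|t|\le T$ with $T\sim\sqrt{\log N}$ and the tails. In the tails I use the bound $\int_{|t|>T}|F_{N,x}-F_g|\le \frac1N\sum_i |(\Gamma x)_i|\mathbbm{1}_{\{|(\Gamma x)_i|>T\}}+\E|g|\mathbbm{1}_{\{|g|>T\}}$. The first term is at most
\[
\frac1N\sup_{|I|\le k}\sum_{i\in I}|\inr{G_i,x}|\le \frac{\sqrt{k}}{N}\sup_{|I|\le k}\Big\|\sum_{i\in I}\inr{G_i,x}e_i\Big\|_2 ,
\]
where $k$ is the number of large coordinates. Gaussian concentration and a union bound over subsets show that at most $k\lesssim \Delta N$ coordinates exceed $T$, uniformly in $x$. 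For such $k$, the standard bound $\E\sup_{x\in A}\|P_I\Gamma x\|_2\lesssim \sqrt{k\log(eN/k)}+\E\sup_{x\in A}\inr{G,x}$, together with concentration at level $\sqrt{\Delta N}$, controls this term by $\sqrt\Delta$, possibly up to logarithms that are absorbed by the $\log^4N$ part of \eqref{eq:rest.Delta}.

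For the bulk I aim at uniform control of $|F_{N,x}(t)-F_g(t)|$ of the Dvoretzky--Kiefer--Wolfowitz type in $(x,t)$, but with a variance-sensitive rate. Near level $t$ the natural deviation is $\sqrt{F_g(t)(1-F_g(t))\Delta}$. Integrating this against $dt$ over $|t|\le T$ gives $\lesssim\sqrt\Delta$, because $\int\sqrt{F_g(1-F_g)}<\infty$.

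To prove the bulk estimate I will chain over $A$, using Talagrand's majorizing measures / generic chaining with the Gaussian $\gamma_2(A)\sim\E\sup_{x\in A}\inr{G,x}$. The indicator $\mathbbm{1}_{\{\inr{G_i,x}\le t\}}$ is not Lipschitz in $x$, so I sandwich it. If $\|x-\pi x\|$ is small, then $\inr{G_i,x}\le t$ implies $\inr{G_i,\pi x}\le t+\epsilon$, except on coordinates where $|\inr{G_i,x-\pi x}|>\epsilon$. The number of such coordinates is controlled, uniformly over the chain links, by the same ``few large coordinates'' estimate used for the tails, applied to the increment set $\{x-\pi x\}$. The smoothing error this creates in $F_g$ is $\epsilon\,\phi(t)$, and its integral is $\epsilon$. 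On the finite net, a Bernstein bound for $\sum_i\mathbbm{1}_{\{\inr{G_i,y}\le t\}}$ on a grid in $t$, combined with a union bound at probability $\exp(-c\Delta N)$, gives the variance-sensitive bound. This works as long as $\log|\text{net}|\lesssim\Delta N$, which holds at the scale where the chain is stopped.

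\textbf{Main obstacle.} The hardest step is stopping the chain at the right scale and treating the increments sharply enough to get $\gamma_2(A)/\sqrt N$ rather than the weaker $\sqrt{\Delta}\cdot\text{diam}$-type bounds. The fine end of the chain must be handled through the sorted structure of $\Gamma(x-\pi x)$: I would bound $\frac1N\sum_i|(\Gamma x)^\sharp_i-(\Gamma\pi x)^\sharp_i|\le\frac1N\|\Gamma(x-\pi x)\|_1$ and control this by $\sup_{x\in A}\|\Gamma(x-\pi x)\|_2/\sqrt N\lesssim(\gamma_2\text{-tail}+\sqrt{\Delta N}\,\text{diam})/\sqrt N$. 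The coarse part, where the tail and the variance-sensitive DKW bound must be balanced, is where the $\log^2N$ loss enters.
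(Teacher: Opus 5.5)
\textbf{Gap: the bulk step.} Your reduction to $\sup_{x\in A}\W_1(F_{N,x},F_g)$ plus the deterministic $\W_1(\hat F,F_g)$ is sound, and so is your treatment of the tails $|t|>T$. The bulk argument, however, rests on a false intermediate claim. The quantity you plan to integrate, $\sup_{x\in A}|F_{N,x}(t)-F_g(t)|$, is in general \emph{not} of order $\sqrt\Delta$ under the theorem's hypothesis. So no chaining can deliver the variance-sensitive uniform DKW bound you aim for.

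Here is a counterexample. Take $A=\{\sqrt{1-\delta^2}\,e_0+\delta e_j : 1\le j\le M\}$, so that $\E\sup_{x\in A}\inr{G,x}\sim\delta\sqrt{\log M}$. Choose $\log M\sim\Delta N/\delta^2$, which respects the restriction on $\Delta$, and $\delta=K\Delta^{1/3}$ with $K$ a large constant, so that $\log M\ll\delta N$. Condition on $(\inr{G_i,e_0})_{i\le N}$. For $|t|\le 1$, the counts $NF_{N,x_j}(t)$, $j\le M$, are then independent sums of independent indicators with a common mean and variance $\sim\delta N$: only the $\sim\delta N$ indices with $\inr{G_i,e_0}$ within $\delta$ of $t$ are undecided. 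Their maximum therefore exceeds the mean by $\gtrsim\sqrt{\delta N\log M}$. Hence $\sup_{x\in A}|F_{N,x}(t)-F_g(t)|\gtrsim\sqrt{\Delta/\delta}\sim\Delta^{1/3}\gg\sqrt\Delta$ for every $|t|\le1$.

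Yet $\sup_{x\in A}\W_1(F_{N,x},F_g)\lesssim\sqrt\Delta$ still holds. For fixed $j$ the excess decorrelates over $t$-windows of width $\delta$ and averages out once integrated. So any proof must integrate in $t$ \emph{before} taking the supremum over $A$.

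\textbf{Error at the fine end.} For fixed $v$ one has $\frac1N\|\Gamma v\|_1\approx\sqrt{2/\pi}\,\|v\|_2$, and Chevet's inequality reads $\E\sup_{v\in V}\|\Gamma v\|_2\le\sqrt N\sup_{v\in V}\|v\|_2+\E\sup_{v\in V}\inr{G,v}$. Your bound drops the leading term. Consequently the $\ell_1$ cut-off costs $\sup_x\|x-\pi x\|_2$ and is affordable only at scale $\lesssim\sqrt\Delta$. Nets of log-cardinality $\lesssim\Delta N$, the only ones a union bound at probability $e^{-c\Delta N}$ can handle, reach only scale $\sim\gamma_2(A)/\sqrt{\Delta N}$. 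That can be of order one, e.g.\ $A=S^{k-1}$ with $k\sim\Delta N$. On the intermediate scales, comparing indicators of consecutive chain points at confidence $e^{-2^s}$ costs an additive error of order $2^s/N$ per link, up to logarithms. This is exactly the regime you flag as the main obstacle, and the sandwich argument does not close it.

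\textbf{The missing idea.} The paper (Theorem~\ref{thm:main} combined with Lemma~\ref{lem:quantile.shift}) chains the $\W_1$ functional itself.
\begin{itemize}
\item The chain runs down to a $\theta/\sqrt N$-net, with $\theta/\sqrt N\sim\sqrt\Delta$ here, and the last step is handled by the $L_2(\P_N)$--$L_2$ comparison.
\item Each link $\Delta_s f$ is truncated at level $\|\Delta_s f\|_{L_2}\sqrt{N/2^s}$. The distribution function of the truncated partial sums $S_r$ is telescoped, and one controls the increments $Z=\int|[\P_N-\P](S_r\le t)-[\P_N-\P](S_{r-1}\le t)|\,dt$.
\item Since $\int|1_{\{a\le t\}}-1_{\{a+b\le t\}}|\,dt=|b|$, resampling one point changes $Z$ by at most $\frac1N\bigl(|\phi_r(\Delta_rf(X_j))|+|\phi_r(\Delta_rf(X_j'))|\bigr)$.
\item The Boucheron--Bousquet--Lugosi--Massart inequality with $q=2^{r+5}$, together with Lata{\l}a's moment bound, then gives deviations of order $\|\Delta_r f\|_{L_2}\sqrt{2^r/N}$ with probability $1-e^{-2^{r+5}}$. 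The truncation is precisely what keeps the moments stable up to $q\sim 2^r$.
\item These deviations sum to $\gamma_2(A)/\sqrt N$. Meanwhile $\E Z\lesssim\log(e/\Delta)/\sqrt N$ on each of the $\lesssim\log(e/\Delta)$ levels, which accounts for the $\log^4 N$ in the condition.
\end{itemize}
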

\end{tcolorbox}

In other words, with high probability, for every $x\in A$ there is a rearrangement of the coordinates of the vector $\Gamma x$ that `lives' in a small $\ell_1(\R^N)$-ball centered in $q$. 
As it happens, the only (potential) looseness is the $\log^4 (N)$ factor; the restriction on $\Delta$ and the probability estimate with which \eqref{eq:structure.Gamma.via.L1} holds are optimal. Indeed, we show that if $N\geq c$, then for every $\Delta>0$ and every fixed $x\in A$,
\[
\P\!\left( \frac{1}{N}\sum_{i=1}^N \big|(\Gamma x)^\sharp_i - q_i\big| \ge \sqrt{\Delta} \right)
\ge \exp(-c'\Delta N).
\]
And also that with probability at least $0.99$,
\[
\sup_{x\in A} \frac{1}{N}\sum_{i=1}^N \big|(\Gamma x)^\sharp_i - q_i\big|
\ge c'' \frac{\E \sup_{x\in A} \inr{G,x}}{\sqrt{N}}.
\]
The proof of the optimality of Theorem \ref{thm:main.intro} can be found in Section \ref{sec:proof.structure}. 

\vskip0.3cm

In addition to being optimal, Theorem \ref{thm:main.intro} is a substantial improvement on existing results: even in the gaussian setting the restrictions on $\Delta$ known previously were considerably weaker. 
For example (and ignoring logarithmic factors), there are sets $A\subset S^{d-1}$ for which, thanks to Theorem \ref{thm:main.intro}, $\Delta \sim 1/N $ is a `legal choice', but previously the best that one could hope for was $\Delta \gtrsim\sqrt d /N $, see \cite{bartl2025empirical}.
Moreover, all prior results (which are stated in or can be derived from \cite{bartl2022structure,bartl2025uniform,boedihardjo2025sharp,gine2006concentration,lugosi2024multivariate,olea2022generalization}) were highly restricted---either requiring strong regularity assumptions on the set $A$, or requiring that $X$ is light-tailed and the marginals have bounded densities. In contrast, the general version of Theorem \ref{thm:main.intro}---formulated in the next section---see Theorem \ref{thm:main.structure}---, is (almost) universal.

At the heart of the proof of Theorem \ref{thm:main.intro} is a uniform concentration estimate on the Wasserstein distance between empirical distribution functions and their true counterparts (see Theorem \ref{thm:main}), a fact that is of independent interest and has several applications (see one example in Appendix \ref{sec:app.application}).

\section{A uniform estimate on the Wasserstein distance}
\label{sec:main.general}
Let $(\Omega,\mu)$ be a probability space, set $X$ to be distributed according to $\mu$, and let $X_1,\dots,X_N$ be independent copies of $X$.
Consider a class of functions $\mathcal{F} \subset L_2(\mu)$, for $f\in\mathcal{F}$ set $F_f(t)=\P(f(X)\leq t)$ to be its distribution function, and let
\[ F_{N,f}(t) = \frac{1}{N}|\{ i \leq N : f(X_i) \leq t\}|, \quad t\in\R\]
be the empirical distribution function of $f(X)$.
For two distribution functions $F$ and $H$, denote their first order Wasserstein distance  by
\[ \W_{1}(F,H)
	=\int_\R |F(t)-H(t)|\,dt.\]
We refer  to \cite{figalli2021invitation,villani2021topics}  for more information on the $\mathcal{W}_{1}$ distance and its role in optimal transport.
Our main result follows from a uniform estimate on the $\W_1$ distance between $F_{N,f}$ and $F_f$.

Set $(G_f)_{f\in\mathcal{F}}$ to be the canonical gaussian process associated with $\mathcal{F}$ (that is, the process is gaussian with mean zero and covariance $\C(G_f,G_h) = \C(f,h)$), and let
	\[ d_{\mathcal{F}}  = \sup_{f\in\mathcal{F}} \| f\|_{L_{2}}
	\quad\text{and}\quad
	 d^{\ast} (\mathcal{F}) = \left( \frac{  \E \sup_{f\in\mathcal{F}} G_f }{ d_{\mathcal{F}} } \right)^{2} \]
be the radius of $\mathcal{F}\cup\{0\}$ and its  critical (or Dvoretzky-Milman) dimension, respectively. 
For a function $f$, set 
\[ \|f\|_{L_{2}(\P_{N})}
= \left(\frac{1}{N}\sum_{i=1}^{N} f^{2}(X_{i}) \right)^{1/2},\]
and the key assumption we require is  that $ \|\cdot \|_{L_{2}}$ and $ \|\cdot \|_{L_{2}(\P_{N})}$ are compatible in the following sense:

\begin{assumption}
\label{ass:main}
	There are  $\theta > \E \sup_{f\in\mathcal{F}} G_f $, $B\geq 1$ and an event $\Omega_{0}$ on which, for every $f,h\in\mathcal{F}\cup\{0\}$, 
	\[ \|f-h\|_{L_{2}(\P_{N})} \leq B   \|f-h\|_{L_{2}}  + \frac{\theta}{\sqrt N}.\]	
\end{assumption}

We show in Section \ref{ref:intro.assumption}  that Assumption \ref{ass:main} is always satisfied with $\theta\sim\E\sup_{f\in\mathcal{F}} G_f$ when the class $\mathcal{F}$ is subgaussian, and in Appendix \ref{sec:app.proof.ass}  that it is valid for other, more heavy-tailed classes as well.

The crucial estimate is as follows:
\begin{tcolorbox}
\begin{theorem}
\label{thm:main}
	For any $B\geq 1$  there are constants $c_{1}$ and $c_{2}$ depending only on $B$ for which the following holds.
	If  Assumption \ref{ass:main} is satisfied and 
	\[   \Delta  N  \geq c_{1}  \max\left\{ \frac{ \theta^{2} }{d_{\mathcal{F}}^{2 }  } ,  \log^{4}(N ) \right\}, \]
	then with probability at least $1-\exp(- c_{2}\Delta N) - \P(\Omega_{0}^c)$,
	\begin{align}
	\label{eq:main.thm.outcome}
	\sup_{f\in\mathcal{F}} \W_1(F_{N,f}, F_f)
	 \leq   d_{\mathcal{F}} \sqrt \Delta.
	 \end{align}
\end{theorem}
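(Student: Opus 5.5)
By homogeneity we may assume $d_{\mathcal{F}}=1$. The plan is to write
\[
\W_1(F_{N,f},F_f)=\int_\R |F_{N,f}(t)-F_f(t)|\,dt
\]
and split the line into a bulk region and tail regions. In the bulk the empirical process $t\mapsto F_{N,f}(t)-F_f(t)$ has variance $F_f(t)(1-F_f(t))/N$. In the tails we control $\int_{|t|>T}|F_{N,f}-F_f|$ directly through truncated moments of $f(X)$ and of the empirical measure.

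\textbf{Tails.} Since $\|f\|_{L_2}\le 1$, Chebyshev gives
\[
\int_{|t|>T} F_f(t)\wedge(1-F_f(t))\,dt\le \E|f(X)|1_{\{|f(X)|>T\}}\le \frac1T .
\]
On the empirical side, $\int_{|t|>T}(\cdots)\le \frac1N\sum_i |f(X_i)|1_{\{|f(X_i)|>T\}}\le \|f\|_{L_2(\P_N)}^2/T$. On $\Omega_0$, Assumption \ref{ass:main} with $h=0$ gives $\|f\|_{L_2(\P_N)}\le B+\theta/\sqrt N\le B+\sqrt{\Delta}$. Taking $T\sim 1/\sqrt\Delta$ makes both tail contributions at most $c\sqrt\Delta$. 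This is too crude on its own, but it suggests doing the decomposition dyadically in probability level rather than in $t$.

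\textbf{Bulk.} For the bulk we use a chaining argument in the style of generic chaining, with $\|\cdot\|_{L_2}$ as the metric. Let $(\mathcal{F}_s)$ be an admissible sequence that is almost optimal for $\gamma_2(\mathcal{F},L_2)\sim \E\sup G_f$, and let $\pi_s f$ denote the approximations. For each fixed $h\in \mathcal{F}_{s_0}$, with $2^{s_0}\sim \Delta N$, a Bernstein/DKW-type bound for the weighted quantity $\int|F_{N,h}-F_h|$ gives deviation at most $c\sqrt\Delta$ with probability $1-\exp(-c\Delta N)$. Here the variance integral $\int\sqrt{F(1-F)}$ is finite modulo the tail truncation, and the $\log N$ losses come in from handling levels $F\in[1/N,1-1/N]$ dyadically; this is where the $\log^4 N$ condition enters. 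A union bound over $|\mathcal{F}_{s_0}|\le 2^{2^{s_0}}$ is then affordable.

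\textbf{Passing from $\pi_{s_0}f$ to $f$.} The key observation is that $\W_1$ is Lipschitz with respect to coupling:
\[
\W_1(F_{N,f},F_{N,h})\le \frac1N\sum_i|f(X_i)-h(X_i)|\le \|f-h\|_{L_2(\P_N)}
\]
and $\W_1(F_f,F_h)\le \|f-h\|_{L_2}$. Hence on $\Omega_0$,
\[
|\W_1(F_{N,f},F_f)-\W_1(F_{N,h},F_h)|\le (B+1)\|f-h\|_{L_2}+\theta/\sqrt N .
\]
The term $\theta/\sqrt N$ is at most $\sqrt\Delta/\sqrt{c_1}$ by assumption, which is fine. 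The difficulty is that $\|f-\pi_{s_0}f\|_{L_2}$ is of order $\gamma_2/2^{s_0/2}\sim \theta/\sqrt{\Delta N}$, and this is $\le \sqrt\Delta$ only when $\Delta N\gtrsim \theta$, not $\theta^2$. So a direct one-step approximation loses.

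\textbf{Main obstacle.} The main obstacle is this mismatch. To overcome it I would chain the increments $\W_1(F_{N,\pi_{s+1}f},\cdot)-\W_1(F_{N,\pi_s f},\cdot)$ through the levels $s\ge s_0$, using the finer bound $\frac1N\sum_i|f-h|(X_i)$, which is an $L_1$-empirical quantity. Its deviation from $\E|f-h|$ concentrates at scale $\|f-h\|_{L_2}\sqrt{2^s/N}$ with probability $1-\exp(-c2^s)$. Summing over $s$ gives $\gamma_2/\sqrt N\le\sqrt\Delta$. The deterministic part $\E|f-h|$ must cancel against the true-distribution increment, and this is where I would exploit the signed structure: one should compare $(F_{N,f}-F_f)-(F_{N,h}-F_h)$ directly, via a Bernstein bound on the indicator differences $1_{\{f\le t\}}-1_{\{h\le t\}}$ integrated in $t$, whose $L_1(dt)$-mass is exactly $|f-h|$. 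The concentration at the last level is controlled by Assumption \ref{ass:main}.
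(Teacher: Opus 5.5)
Chaining the \emph{signed} increments $(F_{N,f}-F_f)-(F_{N,h}-F_h)$, using that the $L_1(dt)$-mass of $1_{\{f\le t\}}-1_{\{h\le t\}}$ is $|f-h|$, is what the paper does; this is the bounded-difference computation in Lemma \ref{lem:control.V.1}. However, the step that carries the proof is only asserted, and as you set it up it fails. A Bernstein/Talagrand-type bound for the level-$s$ increment, holding with probability $1-\exp(-c2^s)$, has two terms: a variance term $\|\Delta_s f\|_{L_2}\sqrt{2^s/N}$ and a term $b\,2^s/N$, where $b$ bounds the contribution of one sample. The raw links carry only $L_2$ information, so there is no useful $b$. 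Indeed, without using $\Omega_0$ even your claim about $\frac1N\sum_i|f-h|(X_i)$ is false. Take a link equal to $\lambda\sqrt{2^sN}\,\|f-h\|_{L_2}$ with probability $(\lambda^2 2^sN)^{-1}$ and $0$ otherwise. Its empirical mean exceeds $\E|f-h|+\frac{\lambda}{2}\|f-h\|_{L_2}\sqrt{2^s/N}$ with probability of order $\lambda^{-2}2^{-s}\gg e^{-c2^s}$. Your truncation in $t$ at $T\sim 1/\sqrt\Delta$ only gives $b=2T$. Then $\sum_s T2^s/N$ over levels up to $2^s\sim N$ is of order $1/\sqrt\Delta$, not $\sqrt\Delta$.

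The missing idea is the level-dependent truncation. Replace each link $\Delta_s f$ by $\phi_s(\Delta_s f)$, cut at $\|\Delta_s f\|_{L_2}\sqrt{N/2^s}$, and chain the partial sums $S_r(f)$. Then $b\,2^r/N$ equals $\|\Delta_r f\|_{L_2}\sqrt{2^r/N}$; the paper concludes with the Boucheron--Bousquet--Lugosi--Massart moment inequality at $q=2^{r+5}$ and Lata{\l}a's bound. The truncation error costs $\|\Delta_s f\|_{L_2}\sqrt{2^s/N}$ in $L_1$, by Cauchy--Schwarz and Markov. It costs the same in $L_1(\P_N)$ on $\Omega_0$, but only because the chain lives on a $\theta/\sqrt N$-separated set. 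There every nonzero link satisfies $\|\Delta_s f\|_{L_2(\P_N)}\le (B+1)\|\Delta_s f\|_{L_2}$, so the additive $\theta/\sqrt N$ in Assumption \ref{ass:main} is absorbed.

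Second, the mean of an increment does not cancel. The quantity $\E\int_\R|(F_{N,f}-F_f)-(F_{N,h}-F_h)|\,dt$ is the mean of the $L_1(dt)$-norm of a centred process, so it is positive and not proportional to $\|f-h\|_{L_2}$. The paper bounds it crudely at every level by $c\,d_{\mathcal F}\log(e/\Delta)/\sqrt N$ via Lemma \ref{lem:W1.exp.bounded}. That lemma is applied using $\|S_r(f)\|_{L_\infty}\lesssim d_{\mathcal F}/\sqrt\Delta$ and $\|S_r(f)\|_{L_2}\lesssim d_{\mathcal F}$, and both of these need $\gamma_2^2(\mathcal F)\lesssim d_{\mathcal F}^2\Delta N$. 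This crude bound is affordable only because the chain stops at $s_1$ with $2^{s_1}\lesssim N$. By Sudakov's inequality and $\theta>\E\sup_{f}G_f$, there is a $\theta/\sqrt N$-separated, $2\theta/\sqrt N$-covering subset of $\mathcal F$ of cardinality $2^{cN}$. Hence there are only $s_1-s_0\lesssim \log(e/\Delta)$ increments, for a total of $d_{\mathcal F}\log^2(e/\Delta)/\sqrt N$. The remainder $f-\pi_{s_1}f$ is then absorbed by Lemma \ref{lem:W.f.plus.h} and Assumption \ref{ass:main}. This product of chain length and per-level mean is the real source of the condition $\Delta N\gtrsim \log^4 N$. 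The single-function bound at level $s_0$ costs only $\log(e/\Delta)/\sqrt N$. Chaining over all $s\ge s_0$, with no stopping level and no control of the means, does not close. (Minor: the one-step approximation from $\pi_{s_0}f$ requires $\Delta N\gtrsim \theta\sqrt N$, not $\Delta N\gtrsim\theta$.)
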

\end{tcolorbox}

\begin{remark}
The way $c_1$ and $c_2$ in Theorem \ref{thm:main} depend on $B$ is of secondary importance. The proof we present implies that one may set $c_1=c_1'B^4$ and $c_2=c_2'/B^4$ for absolute constants $c_1',c_2'$, but that is likely to be suboptimal.  
\end{remark}

\begin{remark}
We may assume that each $f\in\mathcal{F}$ has mean zero: the $\W_1$ distance is invariant to centering.
We also ignore all questions related to measurability that can be resolved using standard methods (e.g., assuming that the class can be well-approximated by a countable subset).
\end{remark}

As it turns out, the estimate in Theorem \ref{thm:main} is sharp---even in a gaussian setting and for classes of linear functionals $\mathcal{F}=\mathcal{F}_A=\{ \inr{\cdot,x} : x\in A\}$.

\begin{lemma}
\label{lem:optimality}
	There are absolute constants $c_{1}$ and $c_{2}$ for which the following holds.
	Let $X=G$ be the standard gaussian vector in $\R^d$, and set $A\subset S^{d-1}$.
	Then for every $\Delta \geq \frac{1}{N}$ and $x\in A$, with probability at least $\exp(-c_{1}\Delta N)$,
\begin{align*}
\W_1(F_{N,x}, F_{x})
\geq  c_{2}\sqrt\Delta.
\end{align*}
Moreover,  with probability at least $0.99$,
\begin{align*}
\sup_{x\in A} \W_1(F_{N,x}, F_{x})
	\geq c_{2}\frac{\E \sup_{x\in A} \inr{G,x}}{\sqrt N}.
\end{align*}
\end{lemma}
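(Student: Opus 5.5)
For the first claim, fix $x\in A$. Then $f_x(X_i)=\inr{X_i,x}=g_i$ are i.i.d.\ standard gaussians, so $F_x=F_g$ and $F_{N,x}$ is the empirical distribution function of $g_1,\dots,g_N$. The plan is to exhibit an explicit event of probability at least $\exp(-c_1\Delta N)$ on which $\W_1$ is large. I would use a simple shift event. Set $\delta=\sqrt\Delta$ and consider
\[
E=\{ g_i\in[1,2]\ \text{for at least } \lceil \delta' N\rceil \text{ indices } i\},
\]
where $\delta'=c\,\delta$ for a suitable constant $c$. On $E$ the empirical measure puts mass at least $\delta'$ on $[1,2]$, whereas $\P(g\in[1,2])=p_0\approx 0.136$. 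This alone does not force $\W_1\gtrsim\delta$. A cleaner route is to use the dual formulation $\W_1(F_{N,x},F_g)\ge \frac1N\sum_i \phi(g_i)-\E\phi(g)$ for a $1$-Lipschitz test function $\phi$. Taking $\phi(t)=t$ gives $\W_1\ge |\bar g|$, where $\bar g=\frac1N\sum_i g_i\sim\mathcal N(0,1/N)$. Hence
\[
\P\bigl(\W_1\ge\sqrt\Delta\bigr)\ge \P\bigl(\bar g\ge\sqrt\Delta\bigr)=\P\bigl(g\ge\sqrt{\Delta N}\bigr)\ge \exp(-c_1\Delta N),
\]
where the last inequality uses the standard gaussian lower tail bound $\P(g\ge t)\ge c\,e^{-t^2}$ for $t\ge1$, valid since $\Delta N\ge1$. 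So the first part reduces to the mean functional and is essentially immediate.

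For the second claim, I would use the same duality with $\phi(t)=t$, uniformly over $x\in A$:
\[
\sup_{x\in A}\W_1(F_{N,x},F_x)\ge \sup_{x\in A}\Bigl|\frac1N\sum_i\inr{X_i,x}\Bigr| = \frac{1}{\sqrt N}\sup_{x\in A}|\inr{G',x}|,
\]
where $G'=\frac{1}{\sqrt N}\sum_i X_i$ is again a standard gaussian vector in $\R^d$. It then remains to show that with probability at least $0.99$ we have $\sup_{x\in A}\inr{G',x}\ge c\,\E\sup_{x\in A}\inr{G,x}$. By gaussian concentration for the $1$-Lipschitz function $G\mapsto\sup_{x\in A}\inr{G,x}$ (Lipschitz because $A\subset S^{d-1}$), the supremum deviates from its mean by at most $t$ with probability at least $1-2e^{-t^2/2}$. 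Choosing $t=4$ gives $\sup_{x\in A}\inr{G',x}\ge \E\sup_{x\in A}\inr{G,x}-4$ with probability at least $0.99$.

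The main obstacle is the regime where $\E\sup_{x\in A}\inr{G,x}$ is of constant order (for instance at most $8$), where this concentration bound is vacuous. In that regime I would instead use the single-point statement: fix $x_0\in A$, and note that $\W_1(F_{N,x_0},F_g)\ge c/\sqrt N$ with probability at least $0.99$. This holds because $\sqrt N\,\W_1(F_{N},F_g)$ converges in distribution to $\int|B(F_g(t))|\,dt$, where $B$ is a Brownian bridge, and this limit is almost surely positive. Alternatively, one can lower bound $\W_1$ directly by the dual formulation with a bounded Lipschitz $\phi$ and apply a small-ball estimate for sums, e.g.\ Paley–Zygmund: $\frac1N\sum_i(\phi(g_i)-\E\phi)$ exceeds $c/\sqrt N$ in absolute value with probability at least a constant. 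To reach $0.99$, I would combine several functionals or use the Brownian-bridge limit with a Berry–Esseen type quantitative bound for large $N$, adjusting constants for small $N$. Since $\E\sup_{x\in A}\inr{G,x}\le 8$ in this regime, the bound $c/\sqrt N$ already gives the claim with a smaller $c_2$.
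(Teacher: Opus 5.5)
Your argument is the paper's: testing the dual (Lipschitz) formulation of $\W_1$ against $\pm{\rm id}$ reduces both claims to the empirical mean, which for fixed $x$ is $\mathcal{N}(0,1/N)$ and uniformly over $A$ is $\frac{1}{\sqrt N}\inr{G',\cdot}$, after which the gaussian lower tail and gaussian concentration finish. The regime $\E\sup_{x\in A}\inr{G,x}\le 8$ that you flag (which the paper's proof leaves implicit) needs neither the Brownian-bridge limit nor Paley--Zygmund: the same functional gives $\sup_{x\in A}|\inr{G',x}|\ge|\inr{G',x_0}|$ for any fixed $x_0\in A$; this is the absolute value of a standard gaussian, so it exceeds $0.01$ with probability at least $0.99$ (its density is at most $1/\sqrt{2\pi}$), and $c_2=1/800$ then works in both regimes.
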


The proofs of Theorem \ref{thm:main} and Lemma \ref{lem:optimality} are presented in Section \ref{sec:proof.main}.

Theorem \ref{thm:main} leads to our main result (and to its gaussian version in Theorem \ref{thm:main.intro}) thanks to  the standard representation of the $\W_{1}$-distance using quantile functions: for any two distribution functions $F$ and $H$,
	\begin{align}
	\label{eq:W1.rep}
	\begin{split}
	\W_{1}(F,H)
	&= \int_{0}^{1} \left| F^{-1}(u) - H^{-1}(u)\right|\,du. \\ 
	\end{split}
	\end{align}
Moreover,  observe that for every $i=1,\dots, N$ and $u\in [\frac{i-1}{N}, \frac{i}{N})$, $F_{N,f}^{-1}(u) = f^{\sharp}(X_{i}) $.
Thus, setting $q_i(f) = F_f^{-1}(\frac{i}{N+1})$ for $i=1,\dots,N$, it follows from \eqref{eq:W1.rep} that $\mathcal{W}_1(F_{N,f},F_f) $ is almost equal to  $\frac{1}{N} \sum_{i=1}^N | f^\sharp(X_i) -  q_i(f)|$ up to the discrepancy between $F_f^{-1}$ and its discretization---which can be easily controlled (see Lemma \ref{lem:quantile.shift}).
We thus have the following:

\begin{tcolorbox}
\begin{theorem}
\label{thm:main.structure}
For any $B\geq 1$  there are constants $c_{1}$ and $c_{2}$ depending only on $B$ such that the following holds.
	If  Assumption \ref{ass:main} is satisfied and 
	\[   \Delta  N  \geq c_{1}  \max\left\{ \frac{ \theta^{2} }{d_{\mathcal{F}}^{2 }  } ,  \log^{4}(N ) \right\}, \]
	then with probability at least $1-\exp(- c_{2}\Delta N) - \P(\Omega_{0}^c)$,
\begin{align*}
\sup_{f\in\mathcal{F}} \frac{1}{N}\sum_{i=1}^N \big| f^\sharp(X_i)  - q_i(f)\big|
\le d_\mathcal{F} \sqrt{\Delta}.
\end{align*}
\end{theorem}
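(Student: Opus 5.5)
Theorem \ref{thm:main.structure} follows from Theorem \ref{thm:main} together with the quantile representation \eqref{eq:W1.rep}. The only extra work is to control the discretization error between $F_f^{-1}$ on the interval $[\frac{i-1}{N},\frac iN)$ and the single value $q_i(f)=F_f^{-1}(\frac{i}{N+1})$, uniformly in $f\in\mathcal{F}$. I would apply Theorem \ref{thm:main} with $\Delta/4$ in place of $\Delta$; this only changes the constants $c_1,c_2$. On the resulting event, $\sup_f \W_1(F_{N,f},F_f)\le d_{\mathcal{F}}\sqrt{\Delta}/2$.

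First I write, using $F_{N,f}^{-1}(u)=f^\sharp(X_i)$ for $u\in[\frac{i-1}N,\frac iN)$,
\[
\frac1N\sum_{i=1}^N |f^\sharp(X_i)-q_i(f)| = \sum_{i=1}^N\int_{(i-1)/N}^{i/N}|F_{N,f}^{-1}(u)-q_i(f)|\,du .
\]
By the triangle inequality this is at most
\[
\W_1(F_{N,f},F_f)+\sum_{i=1}^N\int_{(i-1)/N}^{i/N}|F_f^{-1}(u)-F_f^{-1}(\tfrac{i}{N+1})|\,du .
\]
The second term is deterministic, and it remains to show it is at most $c\,d_{\mathcal{F}}/\sqrt N$. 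Since $\Delta N\ge c_1\log^4 N$, this is then at most $d_{\mathcal{F}}\sqrt\Delta/2$ once $c_1$ is large enough. I expect this is exactly the content of Lemma \ref{lem:quantile.shift}, which the text cites.

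To bound the discretization term, I use that $F_f^{-1}$ is nondecreasing and that $\frac{i}{N+1}\in[\frac{i-1}{N},\frac iN]$. Hence for each $i$ the integrand is at most $F_f^{-1}(\frac iN)-F_f^{-1}(\frac{i-1}N)$, with the end intervals treated separately. For the interior indices $2\le i\le N-1$, I bound the sum by a telescoping-type argument. Comparing with the neighbouring intervals gives
\[
\frac1N\bigl(F_f^{-1}(\tfrac iN)-F_f^{-1}(\tfrac{i-1}N)\bigr)\le \int_{(i-1)/N}^{i/N}\bigl|F_f^{-1}(u)-m\bigr|\,du + \text{(neighbouring terms)},
\]
where $m$ is the median. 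The cleaner route is to note that the total is at most $\int_0^1 |F_f^{-1}(u)-F_f^{-1}(\pi_N(u))|\,du$, where $\pi_N$ shifts $u$ by at most $1/N$. The $L_1$ modulus of continuity of $F_f^{-1}$ under a shift $h$ is at most $2\sqrt h\,\|F_f^{-1}\|_{L_2(0,1)}$, by Cauchy--Schwarz applied on the region where the shift crosses a level: $\int|Q(u+h)-Q(u)|du=\int \P(\text{level }t\text{ crossed})dt$. With $h=1/N$ and $\|F_f^{-1}\|_{L_2}=\|f\|_{L_2}\le d_{\mathcal{F}}$, this gives order $d_{\mathcal{F}}/\sqrt N$.

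For the two end intervals $i=1$ and $i=N$ the integrand may be unbounded. However, $\int_0^{1/N}|F_f^{-1}(u)|du\le \sqrt{1/N}\,\|f\|_{L_2}$ by Cauchy--Schwarz, and $|q_1(f)|/N\le \sqrt{N+1}\,\|f\|_{L_2}/N$ by Chebyshev, since $|F_f^{-1}(\frac{1}{N+1})|\le \|f\|_{L_2}\sqrt{N+1}$ for a mean-zero $f$. Both are therefore $O(d_{\mathcal{F}}/\sqrt N)$.

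The main obstacle is making this deterministic estimate rigorous for arbitrary, possibly atomic or heavy-tailed, distributions using only the second moment. I would handle it via the layer-cake identity $\int_0^{1-h}(Q(u+h)-Q(u))du=\int_\R \min(h,\,F(t)-F(t^-)+\dots)\,dt$, which in effect reduces to $\int \min\{h,F(t),1-F(t)\}\,dt$. I would then bound that by splitting at $|t|\le\|f\|_{L_2}\sqrt N$ and applying Chebyshev beyond this threshold.
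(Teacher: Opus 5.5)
Your proposal is correct, and at the top level it is the paper's argument. The paper obtains Theorem \ref{thm:main.structure} from Theorem \ref{thm:main} together with the deterministic Lemma \ref{lem:quantile.shift} at $p=2$, namely $\bigl|\W_1(F_{N,f},F_f)-\frac1N\sum_{i}|f^\sharp(X_i)-q_i(f)|\bigr|\le 10\|f\|_{L_2}/\sqrt N$. That error is then absorbed into $d_{\mathcal{F}}\sqrt\Delta$ using $\Delta N\ge c_1\log^4 N$.

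The only difference is how you prove the discretization bound. The paper stops at the pointwise observation you already made: for $2\le i\le N-1$ the integrand on $[\frac{i-1}N,\frac iN)$ is at most $F_f^{-1}(\frac iN)-F_f^{-1}(\frac{i-1}N)$. These increments simply telescope to $\frac1N\bigl(F_f^{-1}(1-\frac1N)-F_f^{-1}(\frac1N)\bigr)\le \frac4N\|f\|_{L_2}\sqrt N$, by Markov/Chebyshev applied to the two extreme quantiles. The end intervals are handled exactly as you do. So the median comparison and the modulus-of-continuity detour are unnecessary.

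The detour is nonetheless valid, with two points to make precise:
\begin{enumerate}
\item To pass from the variable shift $\pi_N$ to a fixed shift you need monotonicity, e.g.\ $|F_f^{-1}(u)-F_f^{-1}(\pi_N(u))|\le F_f^{-1}(u+\frac1N)-F_f^{-1}(u-\frac1N)$ on the interior intervals, so $h=2/N$.
\item The usable form of your layer-cake step is the inequality $\int_0^{1-h}\bigl(Q(u+h)-Q(u)\bigr)\,du\le\int_{\R}\min\{h,F(t),1-F(t)\}\,dt$. Splitting at $|t|=\|f\|_{L_2}/\sqrt h$ then gives $4\|f\|_{L_2}\sqrt h$.
\end{enumerate}

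As for what each route buys: the paper's is shorter and directly gives the $L_p$ version $\|f\|_{L_p}/N^{1-1/p}$, which is used later with $p=4$. Yours yields a slightly more general $L_1$ modulus-of-continuity estimate for quantile functions, which also extends to $L_p$ by using Markov in place of Chebyshev. Finally, mean-zero is not needed for your bound on $|q_1(f)|$.
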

\end{tcolorbox}

The proofs of Theorem \ref{thm:main.structure} and Theorem \ref{thm:main.intro} are presented in Section \ref{sec:proof.structure}.

\subsection{On Assumption \ref{ass:main}}
\label{ref:intro.assumption}

Consider first the case when the class $\mathcal{F}$ is $L$-subgaussian; that is, for every $f,h\in\mathcal{F}\cup\{0\}$,
\begin{align}
\label{eq:def.subgaussian}
 \|f-h\|_{L_p}\leq L \sqrt p \|f-h\|_{L_2} 
 \quad\text{for } p \geq 2.
\end{align}
Note that an equivalent formulation is that 
\[
\P\left( |f(X)-h(X)| \geq L' \lambda \|f-h\|_{L_2} \right)
\leq 2\exp(-\lambda^2)
\quad \text{for }\lambda\geq 0,
\]
and then $L'\sim L$; we refer e.g.\ to \cite{vershynin2018high} for this and other basic facts about subgaussian distributions.

We then have the following result.

\begin{lemma}
\label{prop:assumption.gaussian}
There are absolute constants $c_{1}$ and $c_2$ for which the following holds.
If the class $\mathcal{F}$ is $L$-subgaussian, $0\in\mathcal{F}$, and $\theta\geq c_1 L \E \sup_{f\in\mathcal{F}} G_f $, then Assumption \ref{ass:main} is satisfied with that $\theta$, $B=c_{1}L$, and an event $\Omega_{0}$ of probability at least $1-\exp(-c_2\max\{ N,\frac{\theta^2}{d^2_{\mathcal{F}}}\})$.
\end{lemma}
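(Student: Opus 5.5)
We will prove the statement through a uniform bound on the empirical process $\|f-h\|_{L_2(\P_N)}$ indexed by the difference class $\mathcal{H}=\{f-h : f,h\in\mathcal{F}\}$. Since $0\in\mathcal{F}$, it suffices to show that with the required probability, for every $u\in\mathcal{H}$,
\[
\|u\|_{L_2(\P_N)} \le c L\|u\|_{L_2} + \frac{c L\,\E\sup_{f\in\mathcal{F}} G_f}{\sqrt N},
\]
because $\E\sup_{u\in\mathcal{H}}G_u \le 2\E\sup_{f\in\mathcal F}G_f$. The natural tool is a generic-chaining bound for the supremum of $\big(\frac1N\sum_i u^2(X_i)\big)^{1/2}$ over a subgaussian class. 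Concretely, we use the Mendelson--Dirksen type estimate for the quantity $\sup_{u\in\mathcal{H}} \big|\,\|u\|_{L_2(\P_N)}-\|u\|_{L_2}\big|$, or more simply a one-sided chaining argument. For an admissible sequence $(\mathcal{H}_s)$ with approximations $\pi_s u$, we write $u=\pi_{s_0}u+\sum_{s\ge s_0}(\pi_{s+1}u-\pi_s u)$, where $2^{s_0}\sim N$, and apply the triangle inequality in $L_2(\P_N)$.

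The key steps run as follows. First, for a fixed $v$ with $\|v\|_{L_2}=1$, the $L$-subgaussian assumption \eqref{eq:def.subgaussian} makes $v^2(X)$ subexponential with constant $\sim L^2$. Bernstein's inequality then gives
\[
\P\!\left(\|v\|_{L_2(\P_N)}\ge cL(1+t/\sqrt N)\right)\le 2\exp(-\min\{N,t^2\}).
\]
Second, for the chain links with $s\ge s_0$, we use the cruder bound
\[
\|v\|_{L_2(\P_N)}\le cL\|v\|_{L_2}\,2^{s/2}/\sqrt N
\]
with probability $1-\exp(-c2^s u^2)$. This is the standard fact that the $\ell_2$ norm of $N$ subgaussian variables, restricted to the largest $2^s$-ish coordinates, is controlled by $\sqrt{2^s}$; alternatively, one can use a $\psi_2$ tail on the whole vector. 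Summing over links gives the term $L\gamma_2(\mathcal{H})/\sqrt N\sim L\E\sup G/\sqrt N$ by Talagrand's majorizing measures theorem. Third, for the base level $\mathcal{H}_{s_0}$ of cardinality $2^{2^{s_0}}\le e^{cN}$, the Bernstein bound with $t\sim\sqrt N$ and a union bound give $\|\pi_{s_0}u\|_{L_2(\P_N)}\le cL\|\pi_{s_0}u\|_{L_2}$. We then pass from $\|\pi_{s_0}u\|_{L_2}$ back to $\|u\|_{L_2}+\gamma_2/\sqrt{N}$ by the triangle inequality, since $d(u,\pi_{s_0}u)\lesssim \gamma_2 2^{-s_0/2}\sim\gamma_2/\sqrt N$.

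To obtain the stated probability $1-\exp(-c_2\max\{N,\theta^2/d_{\mathcal F}^2\})$, we run the chaining with the deviation parameter tuned to $\theta$. Choosing the union-bound level so that the failure probability is $\exp(-c\,\theta^2/d_{\mathcal F}^2)$ costs an additive term $L\,d_{\mathcal{F}}\cdot(\theta/d_{\mathcal F})/\sqrt N = L\theta/\sqrt N$, which is affordable since the target error is $\theta/\sqrt N$ and $B=c_1L$. The standard chaining tail bound
\[
\sup \le c(\gamma_2 + u\,{\rm diam})
\]
with probability $1-e^{-u^2}$ yields this, and the base level yields the $e^{-cN}$ term; one takes the better of the two regimes.

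The main obstacle is the multiplicative nature of $L_2(\P_N)$: the increments $\|\pi_{s+1}u-\pi_su\|_{L_2(\P_N)}$ must be controlled at level $2^{s}>N$, where Bernstein fails. We would handle this through the $\psi_2$ tail of $\big(\sum_i v^2(X_i)\big)^{1/2}$: this quantity is $\sqrt N\|v\|_{L_2}$ plus a subgaussian fluctuation of size $L\|v\|_{L_2}$, by concentration of the Euclidean norm of a vector with independent subgaussian coordinates. That gives increments of order $L\|v\|_{L_2}(1+2^{s/2}/\sqrt N)$ uniformly, and summing these over the chain is exactly $\gamma_2$.
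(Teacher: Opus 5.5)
Your route is sound and genuinely different from the paper's. The paper passes to $\mathcal{H}=\mathrm{conv}(\{f-\tilde f: f,\tilde f\in\mathcal{F}\})$ and uses that it is star-shaped around $0$. This reduces everything to a single localized estimate, $\sup_{h\in\mathcal{H}_r}\|h\|_{L_2(\P_N)}\le cLr$ on $\mathcal{H}_r=\{h\in\mathcal{H}:\|h\|_{L_2}\le r\}$ with $r\ge\gamma_2(\mathcal{F})/\sqrt N$. The paper obtains this from the two-sided product-process bound \cite[Theorem 1.13]{mendelson2016upper} with deviation parameter $\lambda\sim r\sqrt N/\gamma_2(\mathcal{H}_r)$. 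The probability $1-2\exp(-c\lambda^2 d^\ast(\mathcal{H}_r))$ then becomes $1-2\exp(-c\max\{N,r^2N/d_{\mathcal F}^2\})$, simply because $d_{\mathcal{H}_r}\lesssim\min\{r,d_{\mathcal F}\}$.

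You instead chain by hand on the one-sided quantity, using only the triangle inequality in $L_2(\P_N)$. The multiplicative term comes from a union bound over the at most $e^{cN}$ points of the base level. For $2^s\gtrsim N$, each link satisfies $\|\Delta_s u\|_{L_2(\P_N)}\lesssim L\|\Delta_s u\|_{L_2}2^{s/2}/\sqrt N$ with probability $1-e^{-c2^s}$, which is exactly the $\gamma_2$ weight. (Bernstein does not actually break down there: for the event $\|v\|_{L_2(\P_N)}\ge cL\|v\|_{L_2}(1+t/\sqrt N)$ it is in its subexponential regime and gives failure probability $\exp(-c(N+t^2))$, so one estimate serves the base level and all links.)

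Your argument is elementary and self-contained, with no localization, no convexity and no black box, and it makes plain that only a crude one-sided bound is needed. The paper's argument is shorter given the cited theorem, proves more (a two-sided deviation bound on $\mathcal{H}_r$), and gets the $\theta$-dependence of the probability for free from $d^\ast(\mathcal{H}_r)$.

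Three bookkeeping points need fixing.

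First, failure probabilities add. If the base level is run at deviation $t\sim\sqrt N$, its failure probability $e^{-cN}$ stays in the total, so ``taking the better of the two regimes'' does not give $\exp(-c\max\{N,\theta^2/d_{\mathcal F}^2\})$ when $\theta^2/d_{\mathcal F}^2>N$. Relatedly, your opening reduction, with additive term $cL\,\E\sup_f G_f/\sqrt N$, cannot in general hold with probability better than $1-e^{-cN}$, so it is not the right target. The clean fix is to start the chain at $2^{s_0}\sim\max\{N,\theta^2/d_{\mathcal F}^2\}$, with deviation $t\sim 2^{s_0/2}$ at the base. Since $\|\pi_{s_0}u\|_{L_2}\le 2d_{\mathcal F}$, this costs only $cL\|\pi_{s_0}u\|_{L_2}+cL\theta/\sqrt N$, while still $\|u-\pi_{s_0}u\|_{L_2}\lesssim\gamma_2/\sqrt N$ and the links still sum to $cL\gamma_2/\sqrt N$.

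Second, an additive term $cL\theta/\sqrt N$ is not absorbed by $B=c_1L$, since the additive term in Assumption~\ref{ass:main} carries no factor $B$. Run the argument with $\theta/(cL)$ in place of $\theta$; this is legitimate because $\theta\ge c_1L\,\E\sup_f G_f$. The price is a factor $L^{-2}$ in the exponent. The paper's proof, where one must take $r=\theta/(c_1L\sqrt N)$, has the same feature.

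Third, chaining inside $\mathcal{F}-\mathcal{F}$ needs the links $\pi_{s+1}u-\pi_s u$, which involve four elements of $\mathcal F$, to be $L$-subgaussian. The definition only covers differences of two elements of $\mathcal F\cup\{0\}$. Chain $f$ and $h$ separately in $\mathcal F$, with base pairs $(\pi_{s_0}f,\pi_{s_0}h)$, of which there are at most $2^{2^{s_0+1}}$. Then your argument uses the hypothesis exactly as stated. By contrast, the paper's application of the product-process bound to subsets of $\mathrm{conv}(\mathcal F-\mathcal F)$ tacitly needs subgaussianity there as well.
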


The (standard) proof  is given in Appendix \ref{sec:app.proof.ass}. 
Since $d^\ast(\mathcal{F}\cup\{0\})\sim \max\{ 1, d^\ast(\mathcal{F})\}$, Theorem \ref{thm:main} and Lemma \ref{prop:assumption.gaussian} (applied with $\theta\sim d_{\mathcal{F}}\sqrt{\Delta N}$) yield the following.

\begin{corollary}
\label{cor:W1.subgauss}
There are constants $c_{1}$ and $c_2$ depending only on $L$ such that the following holds.
If the class $\mathcal{F}$ is $L$-subgaussian and 
\[
\Delta N \geq c_1 \max\{ d^\ast(\mathcal{F}), \log^4(N)\},
\]
then with probability at least $1-\exp(-c_2\Delta N)$,
\[
\sup_{f\in\mathcal{F}} \W_1(F_{N,f},F_f) \leq d_\mathcal{F} \sqrt\Delta .
\]
\end{corollary}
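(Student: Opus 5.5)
The plan is to combine Theorem \ref{thm:main} with Lemma \ref{prop:assumption.gaussian}, after first replacing $\mathcal{F}$ by $\mathcal{F}' = \mathcal{F}\cup\{0\}$. This is harmless on both sides. For the conclusion, $\W_1(F_{N,0},F_0)=0$, so the supremum over $\mathcal{F}'$ equals the supremum over $\mathcal{F}$. For the class itself, $\mathcal{F}'$ is still $L$-subgaussian (the defining condition \eqref{eq:def.subgaussian} already quantifies over $\mathcal{F}\cup\{0\}$), and $d_{\mathcal{F}'}=d_{\mathcal{F}}$. Moreover $\E\sup_{f\in\mathcal{F}'}G_f = \E\max\{\sup_{f\in\mathcal{F}} G_f,0\}$. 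The standard estimate $\E\sup_{f\in\mathcal{F}}G_f^+ \lesssim \E\sup_{f\in\mathcal{F}}G_f + d_{\mathcal{F}}$ (by symmetry, $\E\sup_f |G_f| \le \E\sup_f G_f + \E|G_{f_0}|$ for any fixed $f_0$) gives $d^\ast(\mathcal{F}')\lesssim \max\{1,d^\ast(\mathcal{F})\}$, as the paper already notes.

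Next, I choose $\theta$ at the largest scale the theorem allows, namely $\theta = c\, d_{\mathcal{F}}\sqrt{\Delta N}$ for a small constant $c$ depending on $L$. Two conditions must be checked. First, Lemma \ref{prop:assumption.gaussian} requires $\theta \ge c_1 L\,\E\sup_{f\in\mathcal{F}'} G_f$. This is equivalent to $\Delta N \gtrsim_L d^\ast(\mathcal{F}')$, which follows from the hypothesis $\Delta N\ge c_1'\max\{d^\ast(\mathcal{F}),\log^4 N\}$ once $c_1'$ is large enough, using $\log^4 N\gtrsim 1$ for $N\ge 2$ to absorb the ``$1$'' term. The strict inequality $\theta>\E\sup G_f$ in Assumption \ref{ass:main} holds for the same reason. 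Second, Theorem \ref{thm:main}, applied with $B=c_1 L$, requires $\Delta N\ge c_1(B)\max\{\theta^2/d_{\mathcal{F}}^2,\log^4 N\}$. Since $\theta^2/d_{\mathcal{F}}^2 = c^2\Delta N$, the first term is satisfied by taking $c^2\le 1/c_1(B)$. The second term follows from the hypothesis with $c_1'\ge c_1(B)$.

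The one point needing care is compatibility: the constant $c_1$ in Lemma \ref{prop:assumption.gaussian}, which fixes $B$ and hence $c_1(B)$ and the smallness of $c$, must be chosen before the lower threshold on $\Delta N$. The order of choices is therefore: first fix $B$; then fix $c$ small in terms of $c_1(B)$; finally take the threshold constant $c_1'$ large in terms of $c$, $L$ and $c_1(B)$. There is no circularity, because $c$ appears only in the final requirement $\Delta N\gtrsim d^\ast/c^2$.

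Finally, the probability. Theorem \ref{thm:main} gives failure probability at most $\exp(-c_2(B)\Delta N)+\P(\Omega_0^c)$. By Lemma \ref{prop:assumption.gaussian}, $\P(\Omega_0^c)\le \exp(-c_2\max\{N,\theta^2/d_{\mathcal{F}}^2\})\le \exp(-c_2c^2\Delta N)$. Summing the two terms and absorbing the factor $2$ into the exponent (since $\Delta N\ge c_1'$ is large) yields $\exp(-c_2'\Delta N)$ with $c_2'$ depending only on $L$. This completes the deduction, and the bookkeeping of constants is the only real obstacle.
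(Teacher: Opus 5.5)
Your proposal is correct and is the paper's own argument. The paper also passes to $\mathcal{F}\cup\{0\}$ via $d^\ast(\mathcal{F}\cup\{0\})\lesssim\max\{1,d^\ast(\mathcal{F})\}$, invokes Lemma \ref{prop:assumption.gaussian} with $\theta\sim d_{\mathcal{F}}\sqrt{\Delta N}$, and then applies Theorem \ref{thm:main}; your ordering of constants and probability bookkeeping just spell out what the paper leaves implicit.

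One cosmetic slip: your parenthetical inequality should read $\E\sup_f|G_f|\le 2\,\E\sup_f G_f+\E|G_{f_0}|$ (take $\mathcal{F}=\{0,h\}$ with $f_0=0$ to see the factor $2$ is needed). Alternatively, use directly $\E\sup_f G_f^+\le \E\sup_f G_f+\E G_{f_0}^-$. This changes nothing in the argument.
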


We also show in Appendix \ref{sec:app.proof.ass} that Assumption \ref{ass:main} holds for various other (possibly heavy-tailed) classes of functions.

\section{Proofs of Theorem \ref{thm:main} and Lemma \ref{lem:optimality}}
\label{sec:proof.main} 

The proof of Theorem~\ref{thm:main} relies on a (non-standard) variant of Talagrand’s generic chaining theory.
For a detailed and illuminating exposition on generic chaining we refer to Talagrand's treasured book 
(see \cite{talagrand2022upper}).

\begin{definition}
	An admissible sequence $(\mathcal{F}_{s})_{s\geq 0}$ of the class $\mathcal{F} \subset L_2(\mu)$ is a sequence of subsets $\mathcal{F}_s \subset \mathcal{F}$ that satisfy $|\mathcal{F}_{0}|=1$ and $|\mathcal{F}_{s}|\leq 2^{2^{s}}$. 

	Talagrand's $\gamma_{2}$-functional (with respect to the $L_2(\mu)$ distance) is
	\[ \gamma_{2}(\mathcal{F}) = \inf_{(\mathcal{F}_{s})_{s\geq 0}} \sup_{f\in\mathcal{F}} \sum_{s\geq 0} 2^{s/2} \| f-\pi_{s}f\|_{L_{2}},\]
	where $\pi_{s}f$ is the nearest element to $f$ in $\mathcal{F}_{s}$ with respect to the $L_{2}(\mu)$ distance and the infimum is taken with respect to all admissible sequences of $\mathcal{F}$.
\end{definition}

Talagrand's majorizing measures theorem \cite{talagrand1987regularity} implies that $\gamma_2(\mathcal{F})$ --- seemingly, a purely metric object---, is actually a probabilistic entity: it is equivalent to the expectation of the supremum of the canonical gaussian process indexed by $\mathcal{F}$. Formally, there are absolute constants $c_{1}$ and $c_{2}$ for which, for every $\mathcal{F} \subset L_2(\mu)$,  
$$
c_{1} \E \sup_{f\in\mathcal{F}} G_{f} \leq \gamma_{2}(\mathcal{F}) \leq c_{2} \E \sup_{f\in\mathcal{F}} G_{f},
$$ 
where $(G_{f})_{f\in\mathcal{F}}$ is the canonical gaussian process indexed by the class $\mathcal{F}$.

The second ingredient we require is the following dual representation of the $\W_1$-distance:
If $F$ and $H$ have finite first moments,
\begin{align}
\label{eq:W1.rep.Lip}
\W_{1}(F,H)
= \sup\left\{ 
\int_{\mathbb{R}} \varphi \, dF - \int_{\mathbb{R}} \varphi \, dH 
:\ \varphi\colon \R \to \R \text{ is $1$-Lipschitz}
\right\}.
\end{align}
In particular, \eqref{eq:W1.rep.Lip} immediately yields the following observation.

\begin{lemma}
\label{lem:W.f.plus.h}
	For every two measurable functions $f$ and $h$,
	\[ \W_{1}(F_{N,f+h}, F_{f+h} )
	\leq  \W_{1}(F_{N,f}, F_{f} ) +  \|h\|_{L_{1}(\P_{N})} + \|h\|_{L_{1}} . \] 
\end{lemma}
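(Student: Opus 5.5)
The plan is to use the dual representation \eqref{eq:W1.rep.Lip} together with the triangle inequality, comparing both $F_{N,f+h}$ and $F_{f+h}$ to the corresponding distributions of $f$. Since $\W_1$ is a metric on distribution functions with finite first moment, we can write
\[
\W_1(F_{N,f+h},F_{f+h}) \leq \W_1(F_{N,f+h},F_{N,f}) + \W_1(F_{N,f},F_f) + \W_1(F_f,F_{f+h}).
\]
It then suffices to bound the two outer terms by $\|h\|_{L_1(\P_N)}$ and $\|h\|_{L_1}$ respectively. If $h\notin L_1$, the claim is trivial, so we may assume that all the relevant first moments are finite.

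For the last term, fix a $1$-Lipschitz $\varphi$. Then
\[
\int\varphi\,dF_f-\int\varphi\,dF_{f+h} = \E\big[\varphi(f(X))-\varphi(f(X)+h(X))\big] \leq \E|h(X)| = \|h\|_{L_1}.
\]
Taking the supremum over $\varphi$ in \eqref{eq:W1.rep.Lip} gives $\W_1(F_f,F_{f+h})\le \|h\|_{L_1}$. The key point is that $f(X)$ and $(f+h)(X)$ are coupled on the same probability space.

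The first term is handled in exactly the same way, using the empirical measure as the coupling. Integrating $\varphi$ against $F_{N,g}$ is just $\frac1N\sum_i\varphi(g(X_i))$, so
\[
\Big|\int\varphi\,dF_{N,f+h}-\int\varphi\,dF_{N,f}\Big| \le \frac1N\sum_{i}|h(X_i)| = \|h\|_{L_1(\P_N)}.
\]
Summing the three bounds proves the lemma. There is no real obstacle: the only point needing care is the finiteness of first moments required for \eqref{eq:W1.rep.Lip}, and the empirical measures always satisfy this.
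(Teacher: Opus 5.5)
Your proposal is correct and follows the paper's proof essentially verbatim: the three-term triangle inequality, followed by the Lipschitz dual representation \eqref{eq:W1.rep.Lip} applied to the natural couplings $(f(X),(f+h)(X))$ and $(f(X_i),(f+h)(X_i))_{i\le N}$. Your reduction to finite first moments also covers the case $f\notin L_1$, since then $\W_1(F_{N,f},F_f)=\infty$ and the right-hand side is infinite; the paper's ``without loss of generality'' handles this in the same implicit way.
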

\begin{proof}
We assume without loss of generality that $f$ and $h$ have finite first moments.
	Clearly, by the triangle inequality,
	\begin{align*} 
	\W_{1}(F_{N,f+h}, F_{f+h} ) 
	&\leq  \W_{1}(F_{N,f+h}, F_{N,f} ) +  \W_{1}(F_{N,f}, F_{f} ) +  \W_{1}(F_{f}, F_{f+h} ).
	\end{align*}
	Next, let $\varphi$ be any 1-Lipschitz function.
	Then 
	\[ \left| \E \varphi(f(X)) - \E  \varphi(f(X)+h(X)) \right|
	\leq \E[|h(X)|]\]
	and by   \eqref{eq:W1.rep.Lip},  $ \W_{1}(F_{f}, F_{f+h} ) \leq \|h\|_{L_1}$.
	In a similar fashion, 
	\[\W_{1}(F_{N,f+h}, F_{N,f} ) \leq \|h\|_{L_{1}(\P_{N})},\] 
	 which completes the proof.
\end{proof}

\begin{remark}
We may assume that $\Delta \leq 1$.
Indeed, by Lemma \ref{lem:W.f.plus.h} and Jensen's inequality, for every  $f\in\mathcal{F}$,
\[ \W_{1}(F_{N,f}, F_{f} )
	\leq  \|f\|_{L_{2}(\P_{N})} + \|f\|_{L_{2}}, \]
and clearly $\|f\|_{L_2}\leq d_\mathcal{F}$.
Moreover, by Assumption \ref{ass:main}, for every realization $(X_i)_{i=1}^N\in\Omega_0$,
\[\|f\|_{L_{2}(\P_{N})} \leq B d_\mathcal{F} + \frac{\theta}{\sqrt N}\]
and thus  $\W_{1}(F_{N,f}, F_{f} )\leq c(B) d_\mathcal{F}\sqrt\Delta$ for every   $\Delta\geq 1$ satisfying the restriction from Theorem \ref{thm:main}, that $\Delta N \geq c' \theta^2/d_\mathcal{F}^2 $.
\end{remark}

\subsection{Setting up the chaining argument}

If Assumption \ref{ass:main} is satisfied for $\mathcal{F}$, it is also satisfied for $\mathcal{F}\cup\{0\}$, hence we  may assume that $0\in\mathcal{F}$.

Observe that there is a set $\mathcal{F}^{\theta}\subset \mathcal{F}$ with cardinality at most $2^{c N}$ that is  $\theta/\sqrt{N}$-separated (that is, for any  distinct $f,h\in\mathcal{F}^{\theta}$, $\|f-h\|_{L_{2}}\geq \theta/\sqrt N$) and covers $\mathcal{F}$ at scale $2\theta/\sqrt N$ (that is, for every $f\in\mathcal{F}$ there is $h\in\mathcal{F}^\theta$ satisfying $\|f-h\|_{L_2}\leq 2\theta/\sqrt N$).
Indeed, by Sudakov's inequality (see, e.g., \cite[Theorem 5.6]{pisier1999volume}),  if $\mathcal{N}(\varepsilon)$ denotes the smallest cardinality of a set that covers $\mathcal{F}$ at scale $\varepsilon>0$,  then for $\varepsilon = \theta /  \sqrt N $,
\[  \log \left( \mathcal{N} \left( \varepsilon\right)\right)
\leq c_0 \left( \frac{ \E  \sup_{f\in\mathcal{F}} G_f }{\varepsilon} \right)^2 
\leq c_0 N, \]
where the second inequality follows from the condition on $\theta$ in Assumption \ref{ass:main}.
The claim now follows from the standard relation  between packing numbers and covering numbers.
Finally, we may  assume without loss of generality that $0\in\mathcal{F}^\theta$.

Next, let $(\mathcal{F}^{\theta}_{s})_{s\geq 0}$ be an almost optimal admissible sequence of $\mathcal{F}^{\theta}$ and since $\mathcal{F}^{\theta}\subset\mathcal{F}$, we have  that $\gamma_{2}(\mathcal{F}^{\theta})\leq \gamma_{2}(\mathcal{F})$.
Denote by  $s_{1}\geq 0$ the smallest integer that satisfies $2^{2^{s_{1}}} \geq |\mathcal{F}^{\theta}|$; hence $2^{s_{1} }\leq c' N$ and we set $\mathcal{F}^{\theta}_{s}= \mathcal{F}^{\theta}$ for $s\geq s_1$.
For $f\in\mathcal{F}$ and $s\geq 0$,  let $\pi_{s}f $ be the closest element in $\mathcal{F}^{\theta}_{s}$ to $f$. 
In particular, $\pi_s f = \pi_{s_1}f$ for $s\geq s_1$.

Combining these observations we have the following:
\begin{tcolorbox}
For every $f\in\mathcal{F}$, 
\begin{align}
\label{eq:end.of.chain}
\|f-\pi_{s_{1}}f\|_{L_{2}}\leq  \frac{2\theta}{\sqrt N},
\end{align}
 for $ s\leq s_{1}$, 
\begin{align}
\label{eq:middle.of.chain}
\text{either  } \|\Delta_{s }f \|_{L_{2}}=0 \, \text{ or }   \,\|\Delta_{s }f \|_{L_{2}}\geq  \frac{\theta}{\sqrt N},
\end{align}
and 
\begin{align}
\label{eq:almost.optioma.admissible.seq}
 \sup_{f\in\mathcal{F}} \sum_{s\geq 0} 2^{s/2} \| \Delta_{s} f \|_{L_{2}}
\leq 4 \gamma_{2}(\mathcal{F}^{\theta})
\leq 4 \gamma_{2}(\mathcal{F}).
\end{align}
\end{tcolorbox}
Finally, recall that $\Delta\geq \frac{1}{N}$ and set  $s_{0}$ to be the first integer that satisfies $2^{s_{0}}\geq \Delta N$.

\begin{remark}
In what follows we assume that $s_{1}>s_{0}$.
If $s_{1}\leq s_{0}$, the arguments presented below simplify considerably, as there is no need for  chaining; we therefore omit the standard details.
\end{remark}

The first step in the proof of Theorem \ref{thm:main} is the following  reduction.

\begin{lemma}
\label{lem:reduction.to.s1}
Using the notation of Assumption \ref{ass:main}, for every $(X_{i})_{i=1}^{N}\in\Omega_{0}$ and every $f\in\mathcal{F}$, 
\[ \W_{1}(F_{N,f}, F_{f} )
\leq  \W_{1}(F_{N,\pi_{s_{1}} f}, F_{\pi_{s_{1}} f} ) + (2+ B) \frac{\theta}{\sqrt N}.\]
\end{lemma}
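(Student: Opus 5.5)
The plan is to apply Lemma \ref{lem:W.f.plus.h} with $\pi_{s_1}f$ as the main term and $h=f-\pi_{s_1}f$ as the perturbation. Since $f=\pi_{s_1}f+h$, the lemma gives
\[
\W_1(F_{N,f},F_f)\le \W_1(F_{N,\pi_{s_1}f},F_{\pi_{s_1}f})+\|h\|_{L_1(\P_N)}+\|h\|_{L_1}.
\]
It then remains to bound the two norms of $h$ in terms of $\|h\|_{L_2}$.

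For the population term, Jensen's inequality gives $\|h\|_{L_1}\le\|h\|_{L_2}$. For the empirical term, Jensen's inequality with respect to the empirical measure gives $\|h\|_{L_1(\P_N)}\le\|h\|_{L_2(\P_N)}$. Both $f$ and $\pi_{s_1}f$ lie in $\mathcal{F}$, because $\mathcal{F}^\theta\subset\mathcal{F}$. Hence on $\Omega_0$ Assumption \ref{ass:main} applies to the pair $(f,\pi_{s_1}f)$ and yields $\|h\|_{L_2(\P_N)}\le B\|h\|_{L_2}+\theta/\sqrt N$. Altogether,
\[
\W_1(F_{N,f},F_f)\le \W_1(F_{N,\pi_{s_1}f},F_{\pi_{s_1}f})+(1+B)\|h\|_{L_2}+\frac{\theta}{\sqrt N}.
\]

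The remaining step, and the main obstacle, is the size of $\|h\|_{L_2}=\|f-\pi_{s_1}f\|_{L_2}$. Recall that $\pi_{s_1}f$ is the nearest point to $f$ in $\mathcal{F}^\theta_{s_1}=\mathcal{F}^\theta$. If one only uses the covering radius $2\theta/\sqrt N$ from \eqref{eq:end.of.chain}, the bound becomes $(2B+3)\theta/\sqrt N$, which is weaker than stated. To obtain exactly $(2+B)\theta/\sqrt N$ one needs $\|h\|_{L_2}\le \theta/\sqrt N$.

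This comes from the construction of $\mathcal{F}^\theta$ through the packing--covering relation: take $\mathcal{F}^\theta$ to be a \emph{maximal} $\theta/\sqrt N$-separated subset of $\mathcal{F}$ containing $0$. Maximality means that every $f\in\mathcal{F}$ is within $\theta/\sqrt N$ of some element of $\mathcal{F}^\theta$. Otherwise $f$ could be added to $\mathcal{F}^\theta$ while keeping it separated, contradicting maximality. Since $\pi_{s_1}f$ is the nearest point of $\mathcal{F}^\theta$, this gives $\|f-\pi_{s_1}f\|_{L_2}\le\theta/\sqrt N$.

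The cardinality bound used later is unaffected by this choice. A $\theta/\sqrt N$-separated set has cardinality at most $\mathcal{N}(\theta/(2\sqrt N))$. By Sudakov's inequality, exactly as in the text, $\log\mathcal{N}(\theta/(2\sqrt N))\le 4c_0N$, so $|\mathcal{F}^\theta|\le 2^{cN}$ as required. Plugging $\|h\|_{L_2}\le\theta/\sqrt N$ into the display above gives $(1+B)\theta/\sqrt N+\theta/\sqrt N=(2+B)\theta/\sqrt N$, which is exactly the claimed inequality.
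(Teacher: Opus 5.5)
Your proof is correct and follows the paper's argument: Lemma~\ref{lem:W.f.plus.h} with $h=f-\pi_{s_1}f$, Jensen's inequality for both $\|h\|_{L_1}$ and $\|h\|_{L_1(\P_N)}$, and Assumption~\ref{ass:main} applied to $f$ and $\pi_{s_1}f$, both of which lie in $\mathcal{F}$. Your extra step of taking $\mathcal{F}^\theta$ to be a maximal $\theta/\sqrt N$-separated set containing $0$ is a worthwhile addition. It justifies the bound $\|f-\pi_{s_1}f\|_{L_2}\le\theta/\sqrt N$ that the paper's proof uses, whereas \eqref{eq:end.of.chain} as recorded only gives $2\theta/\sqrt N$; that weaker bound would yield the constant $3+2B$ instead of $2+B$, which is harmless downstream.
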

\begin{proof}
By Lemma \ref{lem:W.f.plus.h}, for every $f\in\mathcal{F}$
\[ \W_{1}(F_{N,f}, F_{f} )
\leq  \W_{1}(F_{N,\pi_{s_{1}} f}, F_{\pi_{s_{1}} f} ) + \| f-\pi_{s_{1}} f \|_{L_{1}} + \|f-\pi_{s_{1}} f \|_{L_{1}(\P_{N})}.\]
Recall that 
\[  \| f-\pi_{s_{1}} f \|_{L_{1}} 
\leq  \| f-\pi_{s_{1}} f \|_{L_{2}}
 \leq \frac{\theta}{\sqrt N},\]
 and that  by Assumption \ref{ass:main},
\[ \|f-\pi_{s_{1}} f \|_{L_{2}(\P_{N})}
\leq B \| f-\pi_{s_{1}} f \|_{L_{2}} + \frac{\theta}{\sqrt N}, \]
as required. 
\end{proof}

The next step in the proof is truncation.
For each $m>0$ define
\[
\phi( \,\cdot  \, ;\,  m)\colon\R\to\R,
\quad
\phi( x  \, ;\,  m)=
\begin{cases}
x  &\text{if } |x|\leq m,\\
m &\text{if }  x>m, \\
-m  & \text{otherwise},
\end{cases}
\]
and set $\psi( \,\cdot  \, ;\,  m)={\rm id}-\phi( \,\cdot  \, ;\,  m)$.
Note that $|\psi(x\, ;\,  m)| = \max\{0,|x|-m\}$.

With a  minor abuse notation, set for any function $h$,
\begin{align*}
  \phi_{s}(h) 
&=\phi\left(h \, ;\,  \| h   \|_{L_{2}} \sqrt{ N/2^{s} } \right),\\
 \psi_{s}(h) 
&=\psi\left(h \, ;\,  \| h   \|_{L_{2}} \sqrt{ N/2^{s} } \right),
 \end{align*} 
 and  for  $f\in\mathcal{F}$ let 
\begin{align*}
 \mathcal{T}(f) &= \sum_{s=s_{0}}^{s_{1}-1} \phi_{s }(\Delta_{s}f) + \phi_{s_{0}} (\pi_{s_0}f).
 \end{align*}

The idea behind the introduction of $\mathcal{T}(f)$ is  a multi-level truncation.
Clearly $ \pi_{s_{1}} f =  \sum_{s=s_{0}}^{s_{1}-1} \Delta_{s}f +\pi_{s_0}f$; thus $\mathcal{T}(f)$ is a truncation of $\pi_{s_{1}}f$ that takes place for each `link' separately, according to the level $s$ and to the $L_{2}$-norm of the link. 
This is essential in guaranteeing that the wanted degree of concentration is exhibited by all the links simultaneously---and is the reason behind the choice of the truncation level of an $s$-link at $\|\Delta_{s} f\|\sqrt{N/2^{s}}$.
Note that for $s>s_1$, $\sqrt{N/2^s}<1$, meaning that the truncation would be beyond a meaningful scale; that is the reason why the construction is terminated at the level $s_1$.

\begin{lemma}
\label{lem:reduction.to.truncation}
There is an absolute constant $c$ such that for $(X_{i})_{i=1}^{N}\in\Omega_{0}$ and  every $f\in\mathcal{F}$,
\begin{align*}
 \W_{1}(F_{N,\pi_{s_{1}}f}, F_{\pi_{s_{1}}f}) 
\leq \W_{1}(F_{N,\mathcal{T}(f)}, F_{\mathcal{T}(f)} ) + cB^{2}\left(  d_{\mathcal{F}} \sqrt\Delta + \frac{\gamma_{2}(\mathcal{F})}{\sqrt N}\right).
\end{align*} 
\end{lemma}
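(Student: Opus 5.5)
We apply Lemma \ref{lem:W.f.plus.h} with $f$ replaced by $\mathcal{T}(f)$ and $h = \pi_{s_1}f - \mathcal{T}(f)$. Since $\pi_{s_1}f = \sum_{s=s_0}^{s_1-1}\Delta_s f + \pi_{s_0}f$ and $\phi_s + \psi_s = {\rm id}$, this $h$ is
\[
h = \sum_{s=s_0}^{s_1-1} \psi_s(\Delta_s f) + \psi_{s_0}(\pi_{s_0} f).
\]
It therefore suffices to show that, on $\Omega_0$,
\[
\|h\|_{L_1} + \|h\|_{L_1(\P_N)} \leq cB^2\Big(d_\mathcal{F}\sqrt\Delta + \frac{\gamma_2(\mathcal{F})}{\sqrt N}\Big).
\]
By the triangle inequality we bound each $\psi$-term separately in $L_1$ and $L_1(\P_N)$.

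\textbf{The $L_1$ part.} We use the pointwise bound $|\psi(x;m)| \leq x^2/m$; it holds because $|\psi(x;m)|=0$ for $|x|\le m$, and for $|x|>m$ we have $|x|-m \le |x| \le x^2/m$. For a link $g = \Delta_s f \neq 0$ with $m = \|g\|_{L_2}\sqrt{N/2^s}$ this gives
\[
\|\psi_s(g)\|_{L_1} \leq \frac{\|g\|_{L_2}^2}{m} = \|g\|_{L_2}\sqrt{2^s/N}.
\]
Summing over $s$ and using \eqref{eq:almost.optioma.admissible.seq} yields a bound of $4\gamma_2(\mathcal{F})/\sqrt N$. For the base term $g=\pi_{s_0}f$ we have $\|g\|_{L_2}\le d_\mathcal{F}$ and $2^{s_0}\le 2\Delta N$, so the contribution is at most $\sqrt 2\, d_\mathcal{F}\sqrt\Delta$.

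\textbf{The $L_1(\P_N)$ part.} Here we use the same bound $|\psi(x;m)|\le x^2/m$, now empirically:
\[
\|\psi_s(g)\|_{L_1(\P_N)} \le \frac{\|g\|_{L_2(\P_N)}^2}{m}.
\]
Both $g=\Delta_s f = \pi_{s+1}f-\pi_s f$ and $g=\pi_{s_0}f - 0$ are differences of elements of $\mathcal{F}\cup\{0\}$, so Assumption \ref{ass:main} gives
\[
\|g\|_{L_2(\P_N)} \le B\|g\|_{L_2} + \frac{\theta}{\sqrt N}.
\]
The key point, and the main obstacle, is absorbing the additive $\theta/\sqrt N$. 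For the links with $s\le s_1$ we use the separation property \eqref{eq:middle.of.chain}: either $\Delta_s f=0$, in which case there is nothing to bound, or $\|\Delta_s f\|_{L_2}\ge \theta/\sqrt N$. In the second case
\[
\|g\|_{L_2(\P_N)}\le (B+1)\|g\|_{L_2},
\]
so $\|\psi_s(g)\|_{L_1(\P_N)}\le (B+1)^2\|g\|_{L_2}\sqrt{2^s/N}$. This is exactly why the chain was built on the separated set $\mathcal{F}^\theta$.

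For the base term, $\|\pi_{s_0}f\|_{L_2}$ may be small, so we cannot absorb $\theta/\sqrt N$ the same way. We take $\pi_{0}f=0$, which we may assume since $0\in\mathcal{F}^\theta$ and it may be placed in $\mathcal{F}^\theta_0$. Then $\pi_{s_0}f$ telescopes into links $\Delta_s f$ with $s<s_0$, each of which is either zero or at least $\theta/\sqrt N$ in $L_2$. This gives the empirical estimate
\[
\|\pi_{s_0}f\|_{L_2(\P_N)}\le (B+1)\sum_{s<s_0}\|\Delta_s f\|_{L_2}.
\]
Rather than squaring this, it is cleaner to use the crude bound $|\psi(x;m)|\le |x|$ together with the full-norm estimate
\[
\|\pi_{s_0}f\|_{L_2(\P_N)} \le Bd_\mathcal{F}+\frac{\theta}{\sqrt N}.
\]

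This crude bound leaves a stray $\theta/\sqrt N$ that is not of the required form. We would handle it as follows. In the regime of Theorem \ref{thm:main}, $\theta/\sqrt N\lesssim d_\mathcal{F}\sqrt\Delta$ by the hypothesis on $\Delta$. Moreover the intended choice of $\theta$ is of order $\gamma_2(\mathcal{F})$ or $d_\mathcal{F}\sqrt{\Delta N}$, so this term folds into $cB^2(d_\mathcal{F}\sqrt\Delta+\gamma_2(\mathcal{F})/\sqrt N)$. If a cleaner route is needed, we would instead use the squared bound: since $\pi_{s_0}f\in\mathcal{F}$ is a single function, we apply Assumption \ref{ass:main} to the pair $(\pi_{s_0}f,0)$, and if $\|\pi_{s_0}f\|_{L_2}<\theta/\sqrt N$ we simply use $\|\pi_{s_0}f\|_{L_1(\P_N)}\le 2\theta/\sqrt N$.

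Collecting all terms, the total is bounded by $c(B+1)^2\big(\gamma_2(\mathcal{F})/\sqrt N + d_\mathcal{F}\sqrt\Delta\big)$, which is the claimed estimate.
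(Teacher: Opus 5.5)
Your reduction via Lemma \ref{lem:W.f.plus.h}, your $L_1$ estimates, and your empirical estimates for the links $\Delta_s f$ are correct and follow the paper's argument. The pointwise bound $|\psi(x;m)|\le x^2/m$ is just a compact form of the paper's Cauchy--Schwarz/Markov and tail-integration steps, and using \eqref{eq:middle.of.chain} to absorb $\theta/\sqrt N$ is exactly the point.

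The problem is the base term $\|\psi_{s_0}(\pi_{s_0}f)\|_{L_1(\P_N)}$. The route you call cleaner combines $|\psi(x;m)|\le |x|$ with $\|\pi_{s_0}f\|_{L_2(\P_N)}\le Bd_{\mathcal{F}}+\theta/\sqrt N$, and it fails. The stray $\theta/\sqrt N$ is not the issue; the main term $Bd_{\mathcal{F}}$ has lost the factor $\sqrt\Delta$ altogether. Since $\Delta$ may be as small as order $\log^4(N)/N$, $Bd_{\mathcal{F}}$ is much larger than $cB^2(d_{\mathcal{F}}\sqrt\Delta+\gamma_2(\mathcal{F})/\sqrt N)$. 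It would even swamp the conclusion $d_{\mathcal{F}}\sqrt\Delta$ of Theorem \ref{thm:main}. The $\sqrt\Delta$ comes precisely from the truncation level $\|\pi_{s_0}f\|_{L_2}\sqrt{N/2^{s_0}}$, so the quadratic bound cannot be avoided.

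Your fallback case split does repair this. If $\|\pi_{s_0}f\|_{L_2}\ge\theta/\sqrt N$, the quadratic bound gives $(B+1)^2 d_{\mathcal{F}}\sqrt{2\Delta}$. Otherwise $\|\pi_{s_0}f\|_{L_1(\P_N)}\le (B+1)\theta/\sqrt N$ (not $2\theta/\sqrt N$), and this is absorbed only by invoking the hypothesis $\Delta N\ge c_1\theta^2/d_{\mathcal{F}}^2$ of Theorem \ref{thm:main}.

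The observation you are missing makes all of this unnecessary, and it is what the paper uses. Since $0\in\mathcal{F}^\theta$ and $\mathcal{F}^\theta$ is $\theta/\sqrt N$-separated, $\pi_{s_0}f\in\mathcal{F}^\theta$ is either $0$ or satisfies $\|\pi_{s_0}f\|_{L_2}\ge\theta/\sqrt N$. So your assertion that $\|\pi_{s_0}f\|_{L_2}$ may be small is false, and the second case of your split is empty. The base term $\pi_{s_0}f=\pi_{s_0}f-0$ is then treated exactly like a link: on $\Omega_0$,
\[
\|\psi_{s_0}(\pi_{s_0}f)\|_{L_1(\P_N)}\le (B+1)^2\|\pi_{s_0}f\|_{L_2}\sqrt{2^{s_0}/N}\le (B+1)^2d_{\mathcal{F}}\sqrt{2\Delta}.
\]
No relation between $\theta$ and $\Delta$ is needed. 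The detour through $\pi_0f=0$ and telescoping below $s_0$ is likewise superfluous, and it would require modifying the admissible sequence.
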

\begin{proof}
By Lemma \ref{lem:W.f.plus.h},
\begin{align*}
 \W_{1}(F_{N,\pi_{s_{1}}f}, F_{\pi_{s_{1}}f}) 
&\leq \W_{1}(F_{N,\mathcal{T}(f)}, F_{\mathcal{T}(f)} ) \\
&\qquad + \left\| \pi_{s_1}f- \mathcal{T}(f) \right\|_{L_{1}(\P_N )} + \left\|\pi_{s_1} f-\mathcal{T}(f)\right\|_{L_{1}}.
\end{align*}

To estimate $\|\pi_{s_1}f-\mathcal{T}(f) \|_{L_{1}}$, set $b_{s}= \|\Delta_{s} f\|_{L_{2}}\sqrt{N/2^s}$ and by the  Cauchy-Schwarz inequality followed by  Markov's inequality,
\begin{align*}
 \left\| \psi_{s}(\Delta_{s}f) \right\|_{L_{1}}
&\leq\E\left[ |\Delta_{s} f | 1_{\{ |\Delta_{s} f |\geq b_{s} \} } \right] \\
&\leq \|\Delta_{s} f \|_{L_{2}} \sqrt{ \P( |\Delta_{s} f |\geq b_{s}) }  \\
&\leq \|\Delta_{s} f \|_{L_{2}} \sqrt{ \frac{2^{s}}{N} }.
\end{align*}
In a similar manner, using that $2^{s_{0}}\geq  \Delta N$ and that $\|\pi_{s_{0}} f\|_{L_{2}}\leq d_{\mathcal{F}} $,
\begin{align*}
 \left\| \psi_{s_{0}}(\pi_{s_{0}}f) \right\|_{L_{1}}
&\leq \| \pi_{s_{0}} f \|_{L_{2}} \sqrt{ \P( |\pi_{s_{0}} f |\geq \|\pi_{s_{0}} f\|_{L_{2}}/\sqrt{\Delta}) }  
\leq   d_{\mathcal{F}}  \sqrt\Delta.
\end{align*}
Thus, by \eqref{eq:almost.optioma.admissible.seq},
\begin{align}
\label{eq:pi.s1.vs.T.some.eq}
\begin{split}
\| \pi_{s_1} f-\mathcal{T}(f) \|_{L_{1}}
&\leq \left\| \psi_{s_{0}}(\pi_{s_{0}}f) \right\|_{L_{1}} + \sum_{s=s_{0}}^{s_{1}-1} \left\| \psi_{s}(\Delta_{s}f) \right\|_{L_{1}}\\
&\leq  d_{\mathcal{F}} \sqrt\Delta  + \sum_{s=s_{0}}^{s_{1}-1}  \sqrt\frac{2^{s}}{N} \left\|\Delta_{s}f \right\|_{L_{2}}
\leq d_{\mathcal{F}}  \sqrt\Delta  + 4 \frac{\gamma_{2}(\mathcal{F})}{\sqrt N}.
\end{split}
\end{align}

The analysis of the  $\|\psi_{s}(\Delta_{s} f)\|_{L_{1}(\P_N)}$-term is slightly more involved.
Since $\mathcal{F}_{s_{1}}^{\theta}$ is a $\theta/\sqrt{N}$-separated set, we either have $\Delta_{s} f=0$ (in which case  $\| \psi_{s}(\Delta_sf)\|_{L_{1}(\P_{N})} =0$), or $\|\Delta_{s } f\|_{L_{2}}\geq \theta/\sqrt N$.
In the latter case, and since $|\psi_{s}(\Delta_sf)| = \max\{0,|\Delta_sf|-b_{s}\}$,  tail-integration followed by Markov's inequality show that 
\begin{align*}
 \left\| \psi_{s}(\Delta_sf) \right\|_{L_{1}(\P_{N})}
& =\int_{b_{s}}^{\infty}\P_{N}(|\Delta_{s} f|> t) \,dt 
\leq \int_{b_{s}}^{\infty} \frac{ \|\Delta_{s} f \|_{L_2(\P_N)}^{2}}{ t^{2}} \,dt .
\end{align*}
Moreover, $\| \Delta_s f\|_{L_{2}}\geq\theta/\sqrt N$ and it thus follows from Assumption \ref{ass:main} that 
\[ \|\Delta_{s} f \|_{L_2(\P_N)}^{2} 
\leq  (B+1)^{2}  \|\Delta_{s} f \|_{L_2}^{2} .\]
Therefore, by the choice of $b_{s}$,
\begin{align*}
\left\| \psi_{s}(\Delta_sf) \right\|_{L_{1}(\P_{N})}
&\leq (B+1)^{2}  \|\Delta_{s} f \|_{L_2}^{2}  \int_{b_{s}}^{\infty} \frac{1}{t^{2}} \,dt \\
&=(B+1)^{2} \|\Delta_{s} f \|_{L_2} \sqrt{\frac{2^{s}}{N}}.
\end{align*}

The same arguments can be used to show that
\begin{align*}
 \left\| \psi_{s_{0}}(\pi_{s_{0}}f) \right\|_{L_{1}(\P_{N})}
&\leq c_{1}B^{2}  d_{\mathcal{F}}  \sqrt\Delta,
\end{align*}
and following the same path as in \eqref{eq:pi.s1.vs.T.some.eq}, 
\[\| \pi_{s_1}f-\mathcal{T}(f) \|_{L_{1}(\P_{N})} 
\leq c_{2} B^{2} \left(  d_{\mathcal{F}}  \sqrt\Delta  +  \frac{\gamma_{2}(\mathcal{F})}{\sqrt N} \right),\]
as required. 
\end{proof}

\subsection{The heart of the proof}

We are left to deal with $\W_{1}(F_{N,\mathcal{T}(f)}, F_{\mathcal{T}(f)})$, which is where the non-standard chaining argument is required. 

For $f\in\mathcal{F}$ set $S_{s_0-1}(f) = \phi_{s_{0}}(\pi_{s_{0}}f)$ and for $r\geq s_{0}$ let
\[ S_{r}(f) =  \sum_{s=s_{0}}^{r} \phi_{s}(\Delta_{s}f) +  \phi_{s_{0}}(\pi_{s_{0}}f).\]
In particular $\mathcal{T}(f)=S_{s_{1}-1}(f)$ is a telescopic sum:
\[\mathcal{T}(f) =      \sum_{r=s_{0}}^{s_{1}-1} \left( S_{r}(f)-S_{r-1}(f)  \right) +  \phi_{s_{0}}(\pi_{s_{0}}f) .\]
Moreover,
\[ \P(  \mathcal{T}(f)\leq t) 
= \sum_{r=s_{0}}^{s_{1}-1}  \left( \vphantom{\big(}\P\left(  S_{r}(f) \leq t\right)  -  \P\left( S_{r-1}(f) \leq t\right) \right) + \P(  \phi_{s_{0}}(\pi_{s_{0}}f)  \leq t)  ,\] 
and the same is true for $\P_{N}$ as well.
Thus, setting $[\P_{N}-\P](A) = \P_{N}(A)-\P(A)$, we have that
\begin{align}
\label{eq:split.W}
\begin{split}
\W_{1}(F_{N, \mathcal{T}(f)}, F_{ \mathcal{T}(f) } ) 
& \leq \sum_{r=s_{0}}^{s_{1}-1} \int_{\R} \left| \vphantom{\P^{(a)}}   [\P_{N}-\P]( S_{r}(f)\leq t)  - [\P_{N}-\P]( S_{r-1}(f) \leq t) \right| \,dt \\
&\qquad +\int_{\R} \left| [\P_{N}-\P]( \phi_{s_{0}}(\pi_{s_{0}}f )\leq t)  \right| \,dt   \\
& =\mathcal{E}_{1}(f) + \mathcal{E}_{2}(f).
\end{split}
\end{align}

\subsection{Estimating $\mathcal{E}_{1}(f)$}

\begin{lemma}
\label{prop:main.summed.up}
	There is an absolute constant $c$ such that with probability at least $1-\exp(-4\Delta N)$, for every $f\in\mathcal{F}$, 
	\[ \mathcal{E}_{1}(f)
	 \leq 
	 c \left(   d_{\mathcal{F}}  \frac{  \log^{2}(e/\Delta)}{\sqrt N} + \frac{\gamma_{2}(\mathcal{F})}{\sqrt N} \right).\] 
\end{lemma}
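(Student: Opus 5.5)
The plan is to bound each summand of $\mathcal{E}_1(f)$ separately, level by level in $r$, using a Bernstein-type estimate for a class of $\{-1,0,1\}$-valued functions, followed by a union bound over the chain. Fix $r\in\{s_0,\dots,s_1-1\}$ and $t\in\R$, and set
\[
Z_{r,t}(f)=1_{\{S_r(f)\leq t\}}-1_{\{S_{r-1}(f)\leq t\}},
\]
so that the $r$-th summand equals $\int_\R |[\P_N-\P]Z_{r,t}(f)|\,dt$.

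Since $S_r(f)-S_{r-1}(f)=\phi_r(\Delta_r f)$, the function $Z_{r,t}(f)$ is nonzero only when $t$ lies between $S_{r-1}(f)$ and $S_r(f)$. These two points are at distance at most $b_r=\|\Delta_r f\|_{L_2}\sqrt{N/2^r}$. Consequently $p_t:=\P(Z_{r,t}(f)\neq 0)$ satisfies
\[
\int_\R p_t\,dt=\E|\phi_r(\Delta_r f)|\leq \|\Delta_r f\|_{L_2}.
\]
Moreover, $Z_{r,t}(f)$ depends only on $(\pi_{s_0}f,\dots,\pi_{r+1}f)$. Hence for each $t$ there are at most $2^{2^{r+2}}$ distinct functions as $f$ ranges over $\mathcal{F}$.

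For each of these functions and each fixed $t$, Bernstein's inequality gives, with probability at least $1-\exp(-c'2^r)$,
\[
|[\P_N-\P]Z_{r,t}|\lesssim \sqrt{p_t\,2^r/N}+2^r/N .
\]
Here the constant in front of $2^r$ is taken large enough to absorb the union bound over the $2^{2^{r+2}}$ functions. To make this uniform in $t$, I would exploit the structure of $Z_{r,t}$: it is a difference of two families of indicators, each monotone in $t$. This lets me discretize $t$ on a grid of mesh $d_{\mathcal{F}}/N$, restricted to the range $|t|\lesssim d_{\mathcal{F}}/\sqrt{\Delta}+\sum_s b_s$, which contains all values of the truncated $S_r(f)$. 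The grid has polynomially many points in $N$, which costs only a $\log N$ in the exponent; this is where the condition $\Delta N\gtrsim \log^4 N$ will be used. Summing the failure probabilities over $r\geq s_0$ gives $\sum_{r\ge s_0}\exp(-c''2^r)\le \exp(-4\Delta N)$, since $2^{s_0}\geq \Delta N$.

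It remains to integrate the pointwise bound in $t$. The linear term is handled without integrating over the whole range: on the good event, $\P_N(Z_{r,t}\neq 0)\lesssim p_t+2^r/N$, and $\int \P_N(Z_{r,t}\neq0)\,dt=\P_N|\phi_r(\Delta_r f)|$ is controlled directly by Assumption \ref{ass:main} (as in Lemma \ref{lem:reduction.to.truncation}). The main term is $\int\sqrt{p_t}\,dt$, and this is the step I expect to be the real obstacle. Cauchy--Schwarz on the full range is too lossy, so I would split $\R$ into dyadic shells $\{|t|\sim 2^j d_{\mathcal{F}}\}$. On each shell I would use two bounds simultaneously:
\[
\int_{\text{shell}}\sqrt{p_t}\,dt\leq\sqrt{|\text{shell}|\int p_t\,dt},
\]
and a tail bound $p_t\lesssim \P(|S_{r-1}(f)|\gtrsim |t|)\lesssim 2^{-2j}$, valid because $\|S_{r-1}(f)\|_{L_2}\lesssim d_{\mathcal{F}}+\gamma_2(\mathcal{F})\sqrt{2^{r}/N}$ after truncation.

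This should yield a per-level contribution of order
\[
\frac{2^{r/2}\|\Delta_rf\|_{L_2}}{\sqrt N}+\frac{d_{\mathcal{F}}}{\sqrt N}\,\log(e/\Delta),
\]
where the number of relevant shells is $O(\log(e/\Delta))$ because $|t|\lesssim d_{\mathcal{F}}/\sqrt\Delta$ dominates. The mixed term $\sqrt{2^r d_{\mathcal{F}}\|\Delta_r f\|/N}$ is handled by AM--GM. Summing over $r$, the first part gives $\gamma_2(\mathcal{F})/\sqrt N$ by \eqref{eq:almost.optioma.admissible.seq}. The second part must be summed over the relevant range of $r$, which I would limit to $O(\log(e/\Delta))$ effective levels by using that $\|\Delta_r f\|$ decays. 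This gives the claimed $d_{\mathcal{F}}\log^2(e/\Delta)/\sqrt N$. If the level count cannot be limited this way, I would accept the weaker $\log N$ factor here and absorb it using $\Delta N\gtrsim\log^4N$.
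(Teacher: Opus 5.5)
There is a genuine gap, and it is at the step you flagged as the obstacle. You integrate pointwise (in $t$) high-probability Bernstein bounds, which leaves a deviation term $\sqrt{2^r/N}\int_\R\sqrt{p_t}\,dt$. This term is intrinsically too large.

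Suppose $S_{r-1}(f)$ has density of order $1/d_{\mathcal{F}}$ on $[-d_{\mathcal{F}},d_{\mathcal{F}}]$ and $|\phi_r(\Delta_r f)|\approx\delta=\|\Delta_r f\|_{L_2}$. Then $p_t\approx\delta/d_{\mathcal{F}}$ there, so $\int\sqrt{p_t}\,dt\sim\sqrt{d_{\mathcal{F}}\delta}$, and no dyadic splitting improves this. Your own shell argument indeed produces the mixed term $\sqrt{2^r d_{\mathcal{F}}\|\Delta_r f\|_{L_2}/N}$.

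AM--GM cannot absorb this term. The geometric mean of the two admissible terms $\sqrt{2^r/N}\,\|\Delta_r f\|_{L_2}$ and $d_{\mathcal{F}}/\sqrt N$ is $\sqrt{2^{r/2}d_{\mathcal{F}}\|\Delta_r f\|_{L_2}/N}$, a factor $2^{r/4}$ smaller than your mixed term. Any AM--GM split therefore leaves either $2^{r/2}d_{\mathcal{F}}/\sqrt N$ or $2^{r}\|\Delta_r f\|_{L_2}/\sqrt N$, and neither sums over $r$ to the claimed bound. Up to logarithms, with links spread evenly across levels and $\gamma_2(\mathcal{F})\sim d_{\mathcal{F}}\sqrt{\Delta N}$, you end with roughly $d_{\mathcal{F}}\Delta^{1/4}$ instead of $d_{\mathcal{F}}\sqrt\Delta$.

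The loss is structural. You are effectively using $\|\int|Y_t|\,dt\|_{L_q}\le\int\|Y_t\|_{L_q}\,dt$ with $q\sim 2^r$, and so paying the high-probability price $\sqrt{2^r}$ on the mean-size quantity $\int\sqrt{p_t/N}\,dt$. Your treatment of the linear Bernstein term has a similar defect as written. Bounding it through $\P_N|\phi_r(\Delta_r f)|\le(B+1)\|\Delta_r f\|_{L_2}$ loses the factor $\sqrt{2^r/N}$, and $\sum_{r\ge s_0}\|\Delta_r f\|_{L_2}$ is only $O(\gamma_2(\mathcal{F})/\sqrt{\Delta N})$.

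The missing idea is to treat the whole integral $Z=\int_\R|\cdots|\,dt$ as a single function of the sample.
\begin{itemize}
\item Replacing $X_j$ by $X_j'$ changes $Z$ by at most $\frac1N(|\phi_r(\Delta_r f(X_j))|+|\phi_r(\Delta_r f(X_j'))|)$, because the relevant indicators differ only on an interval of that length.
\item The Boucheron--Bousquet--Lugosi--Massart moment inequality with $q=2^{r+5}$, combined with Lata{\l}a's moment bound for sums of i.i.d.\ nonnegative variables, controls the fluctuation. Lata{\l}a's bound is where the truncation $\|\phi_r(\Delta_r f)\|_{L_\infty}\le\|\Delta_r f\|_{L_2}\sqrt{N/2^r}$ is used. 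Together they give $|Z-\E Z|\lesssim\sqrt{2^r/N}\,\|\Delta_r f\|_{L_2}$ with probability $1-\exp(-2^{r+5})$.
\item The mean is bounded separately, with no $2^r$ factor: $\E Z\lesssim d_{\mathcal{F}}\log(e/\Delta)/\sqrt N$. This uses $\|S_r(f)\|_{L_\infty}\lesssim d_{\mathcal{F}}/\sqrt\Delta$ and $\|S_r(f)\|_{L_2}\lesssim d_{\mathcal{F}}$.
\end{itemize}
The high-probability cost is thus paid only on the fluctuation, whose scale is the link $\|\Delta_r f\|_{L_2}$. After that, the union bound over the at most $\exp(2^{r+4})$ pairs $(S_r(f),S_{r-1}(f))$ and over $r$ goes as you describe. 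The level count $s_1-s_0\lesssim\log(e/\Delta)$ follows automatically from $2^{s_0}\ge\Delta N$ and $2^{s_1}\le c'N$, so you need neither a decay argument nor the $\log N$ fallback.
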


The crucial ingredient is a high probability estimate on each increment 
\[\int_{\R} \left| \vphantom{\P^{(a)}}   [\P_{N}-\P]( S_{r} (f)\leq t)  - [\P_{N}-\P]( S_{r-1} (f)\leq t) \right| \,dt :\]

\begin{lemma}
\label{prop:main}
	There is an absolute constant $c$ such that the following holds.
	For every $f\in\mathcal{F}$ and  $r\in\{s_{0},\dots,s_{1}-1\}$, with probability at least $1-\exp(-2^{r+5})$, 
	\begin{align}
	\label{eq:main.prop.statement}
	\begin{split}
	& \int_{\R} \left| \vphantom{\P^{(a)}}   [\P_{N}-\P]( S_{r} (f)\leq t)  - [\P_{N}-\P]( S_{r-1} (f)\leq t) \right| \,dt \\
	& \leq  c \left(  d_{\mathcal{F}}\frac{  \log(e/\Delta )}{\sqrt N} + \|\Delta_{r} f\|_{L_{2}}  \sqrt{\frac{2^{r}}{N}} \right).
	\end{split}
	\end{align}
\end{lemma}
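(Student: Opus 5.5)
Write $Y=S_{r-1}(f)$ and $Z=\phi_r(\Delta_r f)$, so that $S_r(f)=Y+Z$ and $|Z|\le b_r=\|\Delta_r f\|_{L_2}\sqrt{N/2^r}$. For every $t$ set
\[
W_t=1_{\{Y+Z\le t\}}-1_{\{Y\le t\}}\in\{-1,0,1\}.
\]
The integrand in \eqref{eq:main.prop.statement} is then $|[\P_N-\P](W_t)|$. Moreover, $W_t\neq 0$ only if $t$ lies between $Y$ and $Y+Z$. Hence $p_t:=\P(W_t\ne0)$ satisfies $\int_\R p_t\,dt=\E|Z|\le\|\Delta_r f\|_{L_2}$. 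The same identity holds with $\P_N$ in place of $\P$.

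\textbf{Step 1: a fixed $t$.} I would apply Bernstein's inequality to the bounded, centred variables $W_t(X_i)-\E W_t$, whose variance is at most $p_t$, at the level $u\sim 2^{r}$. This gives, with probability at least $1-\exp(-c2^{r})$,
\[
\big|[\P_N-\P](W_t)\big|\lesssim \sqrt{p_t\,\tfrac{2^r}{N}}+\tfrac{2^r}{N}.
\]

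\textbf{Step 2: uniformity in $t$.} Only a bounded range of $t$ matters. Indeed, $|Y|\le \|\pi_{s_0}f\|_{L_2}/\sqrt\Delta+\sum_{s<r}b_s$, and this range is at most polynomial in $N$ and $1/\Delta$ in units of $d_{\mathcal F}$. Outside the range $W_t\equiv0$. Because $t\mapsto 1_{\{Y\le t\}}$ is monotone, I would replace the continuum of $t$'s by a grid. On the grid I would bracket $W_t$ by the differences of monotone indicators at neighbouring grid points, with the mesh chosen so that the discretisation error is $\lesssim \|\Delta_r f\|_{L_2}\sqrt{2^r/N}$. The grid has $\exp(O(\log(N/\Delta)))$ points. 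Since $2^r\ge 2^{s_0}\ge \Delta N\gtrsim\log^4 N$, a union bound over the grid costs only a constant in the exponent. This lets me run Step 1 at level $u=2^{r+6}$ and still end with probability $1-\exp(-2^{r+5})$.

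\textbf{Step 3: integrating in $t$, the main obstacle.} The term $\frac{2^r}{N}$ must only be integrated over the set where $W_t$ can be nonzero. That set has length at most $2b_r$, which gives $\frac{2^r}{N}\cdot b_r=\|\Delta_r f\|_{L_2}\sqrt{2^r/N}$, as wanted.

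The delicate term is $\sqrt{2^r/N}\int\sqrt{p_t}\,dt$, because $\int\sqrt{p_t}$ is not controlled by $\int p_t=\E|Z|$. My plan is to split the $t$-line into two regions.
\begin{enumerate}[(a)]
\item Where $p_t\le 2^r/N$, I would not use Bernstein. Instead I would bound $|[\P_N-\P]W_t|\le \P_N(W_t\neq0)+p_t$. Integrating gives $\|Z\|_{L_1(\P_N)}$ restricted to that region plus the corresponding $\P$-mass, and the $\P_N$-part concentrates at scale $\|\Delta_r f\|_{L_2}\sqrt{2^r/N}$ by Bernstein, since $|Z|\le b_r$.
\item Where $p_t>2^r/N$, I would use $p_t\le \P(|Y-t|\le b_r)$ and a dyadic decomposition of $|t|$ according to the tails of $Y$. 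Recall $\|Y\|_{L_2}\lesssim d_{\mathcal F}$, and $Y$ is supported on a range of size $d_{\mathcal F}\,\mathrm{poly}(1/\Delta)$ by the truncations. With Cauchy--Schwarz on each dyadic shell and on the set where $p_t>2^r/N$, each shell should contribute $\lesssim d_{\mathcal F}/\sqrt N$.
\end{enumerate}
The number of relevant shells is $\lesssim\log(e/\Delta)$, which is where the term $d_{\mathcal F}\log(e/\Delta)/\sqrt N$ would come from.

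I expect this last bookkeeping to be the hardest part: the weighted Cauchy--Schwarz must not produce a $d_{\mathcal F}\sqrt{2^r/N}$ term. If a direct split fails, I would first try choosing the weight $w(t)=(d_{\mathcal F}+|t|)\log^2(e+|t|/d_{\mathcal F})$. Then $\int 1/w$ is bounded, while
\[
\int p_t\,w(t)\,dt\le \E\big[|Z|\,(d_{\mathcal F}+|Y|+|Z|)\log^2(\cdot)\big],
\]
which I would then combine with the small-$p_t$ region from (a).
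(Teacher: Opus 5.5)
\textbf{There is a genuine gap.} It lies in Step 3, and it is not a matter of bookkeeping. You bound $|[\P_N-\P](W_t)|$ separately for each $t$ at confidence $1-e^{-c2^r}$ and then integrate the bound in $t$; no version of this can give the lemma.

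At a fixed $t$ with $Np_t\gg 2^r$, the Bernstein level $\sqrt{p_t2^r/N}$ is sharp: the deviation exceeds a constant multiple of it with probability at least $e^{-C2^r}$. So any such argument produces at least $\sqrt{2^r/N}\int\sqrt{p_t}\,dt$. That is $\sqrt{2^r}$ times the typical size $\int\sqrt{p_t/N}\,dt$ of the integral, and it is too large.

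Here is a concrete case in the gaussian class. Let $Y=S_{r-1}(f)$ be spread at scale $d_{\mathcal F}$. Let $\phi_r(\Delta_rf)$ have size $\epsilon=\|\Delta_rf\|_{L_2}$ and be essentially independent of $Y$ (orthogonal directions). Then:
\begin{enumerate}[(i)]
\item $p_t\approx\epsilon/d_{\mathcal F}$ on an interval of length $\sim d_{\mathcal F}$, so $\sqrt{2^r/N}\int\sqrt{p_t}\,dt\approx\sqrt{2^r\epsilon d_{\mathcal F}/N}$.
\item Take $\Delta\sim\log^4(N)/N$, $2^r\approx\sqrt N$ and $\epsilon\approx d_{\mathcal F}N^{-1/4}$. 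These choices are compatible with $r\ge s_0$ and with $2^{r/2}\|\Delta_rf\|_{L_2}\le 4\gamma_2(\mathcal F)$; moreover $b_r\approx d_{\mathcal F}$, so the truncation is inactive.
\item Since $p_t\approx N^{-1/4}>2^r/N$, you are in your region (b).
\item Your bound is then $\approx d_{\mathcal F}N^{-3/8}$, while the right-hand side of the lemma is $\approx d_{\mathcal F}\log(N)/\sqrt N$.
\end{enumerate}
So the hope that each dyadic shell contributes $\lesssim d_{\mathcal F}/\sqrt N$ fails. The weighted Cauchy--Schwarz fallback gives the same $\sqrt{2^r\epsilon d_{\mathcal F}/N}$, up to logarithms.

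Two further steps are also wrong.
\begin{itemize}
\item The set of $t$ where $W_t$ can be nonzero does not have length $2b_r$. For each sample point it is an interval of length at most $b_r$, but that interval moves with $Y$. Hence $\{p_t>0\}$ is essentially the whole range of $Y$, of length up to $\sim d_{\mathcal F}/\sqrt\Delta$, and the $2^r/N$ term does not integrate to $\|\Delta_rf\|_{L_2}\sqrt{2^r/N}$.
\item In region (a), the trivial bound $\P_N(W_t\neq0)+p_t$ integrates to about $2\E|\phi_r(\Delta_rf)|\sim\|\Delta_rf\|_{L_2}$. That has no factor $\sqrt{2^r/N}$ whenever most of $\int p_t\,dt$ sits in that region.
\end{itemize}

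\textbf{The missing idea.} Concentrate the integral $I$ on the left-hand side as a single function of $(X_1,\dots,X_N)$, not pointwise in $t$, and control its mean separately.

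For the fluctuations: replacing $X_j$ by an independent copy $X_j'$ changes $I$ by at most $\frac1N\bigl(|\phi_r(\Delta_rf(X_j))|+|\phi_r(\Delta_rf(X_j'))|\bigr)$. This is because, for a single point, $1_{\{S_r(f)\le t\}}$ and $1_{\{S_{r-1}(f)\le t\}}$ differ only on an interval of that length. Feed this into the moment form of Efron--Stein (Theorem~\ref{thm:BBLM}) with $q=2^{r+5}$. Then bound $\|\sum_j\phi_r^2(\Delta_rf(X_j))\|_{L_q}\lesssim N\|\Delta_rf\|_{L_2}^2$ via Lata{\l}a's inequality (Theorem~\ref{thm:latala}); this is exactly where the truncation level $b_r$ is used, since it keeps moments up to order $q\sim 2^r$ stable. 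The result is $|I-\E I|\lesssim\|\Delta_rf\|_{L_2}\sqrt{2^r/N}$ with probability $1-e^{-2^{r+5}}$.

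For the mean, a crude bound suffices:
\[
\E I\le\E\W_1(F_{N,S_r(f)},F_{S_r(f)})+\E\W_1(F_{N,S_{r-1}(f)},F_{S_{r-1}(f)})\lesssim d_{\mathcal F}\log(e/\Delta)/\sqrt N,
\]
using $\|S_\ell(f)\|_{L_2}\lesssim d_{\mathcal F}$, $\|S_\ell(f)\|_{L_\infty}\lesssim d_{\mathcal F}/\sqrt\Delta$ and $\E|F_{N,h}(t)-F_h(t)|\le\|h\|_{L_2}/(|t|\sqrt N)$ (Lemma~\ref{lem:W1.exp.bounded}). This is where your dyadic-shell heuristic legitimately produces $\log(e/\Delta)$, but for the mean only, without the extra $\sqrt{2^r}$.
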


Let us show that Lemma \ref{prop:main.summed.up} is an immediate outcome of Lemma \ref{prop:main}.

\begin{proof}[Proof of Lemma \ref{prop:main.summed.up}]
Observe that
\[|\{ (S_{r}(f),S_{r-1}(f)) : f\in\mathcal{F} \}| \leq \exp(2^{r+4}) .\]
Indeed, recall that  $S_{r}(f) =  \sum_{s=s_{0}}^{r} \phi_{s}(\Delta_{s}f)  + \phi_{s_{0}}(\pi_{s_{0}}f) $ and thus
\begin{align*}
 |\{ S_{r}(f) : f\in\mathcal{F}\}|
&\leq  \sum_{s=s_{0}}^{r}  | \mathcal{F}_{s+1}^{\theta}\times \mathcal{F}_{s}^{\theta}| +  |\mathcal{F}_{s_{0}}^{\theta}|  \\
&\leq    \sum_{s=s_{0}}^{r}   2^{2^{s+1}} 2^{2^{s}}   + 2^{2^{s_{0}}}
\leq \exp(2^{r+3}).
\end{align*}

Now consider $r\in \{s_{0},\dots,s_{1}-1\}$.
By Lemma \ref{prop:main} and the union bound over the pairs $(S_{r}(f),S_{r-1}(f))$, it is evident that with probability at least $1-\exp(-2^{r+4})$  \eqref{eq:main.prop.statement} is true uniformly for every pair $((S_{r}(f),S_{r-1}(f)))_{f\in\mathcal{F}}$.
And, by the union bound over $r$, with probability at least 
\[ 1-\sum_{r\geq s_{0}} \exp(-2^{r+4})
\geq 1-\exp(-2^{s_{0}+2}),\]
for every $f\in\mathcal{F}$, 
\begin{align*}
 \mathcal{E}_{1}(f)
&\leq c_{1} \sum_{r=s_{0}}^{s_{1}  -1}\left(  \|\Delta_{r} f\|_{L_{2}}  \sqrt{\frac{2^{r}}{N}}  + d_{\mathcal{F}}\frac{  \log (e/\Delta)}{\sqrt N} \right)\\
&\leq c_{1} \left( 4 \gamma_{2}(\mathcal{F}) + (s_{1}+1 -s_{0})d_{\mathcal{F}} \frac{  \log (e/\Delta)}{\sqrt N}  \right).
\end{align*}
To complete the proof, note that 
\[ s_{1}-s_{0} \leq \log_{2}(4N) - \log_{2}(\Delta N) 
\leq c \log \left( \frac{e}{\Delta} \right) .\qedhere\]
\end{proof}

Next, fix $f\in\mathcal{F}$ and $r\geq s_{0}$, and set
\[Z =\int_{\R} \left|  \vphantom{\P^{(a)}}  [\P_{N}-\P]( S_{r}(f) \leq t)  - [\P_{N}-\P]( S_{r-1}(f) \leq t) \right| \,dt .\]

We turn to the `main event':
showing that $Z$ concentrates around its expectation, and that the expectation is sufficiently small.

\subsubsection{Controlling $\E Z$}

\begin{lemma}
\label{lem:W1.exp.bounded}
	Let $m>0$ and  $M\geq 1$.
	If $h$ satisfies that  $\|h\|_{L_{2}}\leq m$ and $\|h\|_{L_{\infty}}\leq m M$, then
	\[  \E \W_{1}(F_{N ,h }, F_{h } )  
\leq 2m \frac{\log(eM)  }{\sqrt N}.\]
\end{lemma}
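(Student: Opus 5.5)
We will use the representation $\W_1(F_{N,h},F_h)=\int_\R |F_{N,h}(t)-F_h(t)|\,dt$ and bound the expectation pointwise in $t$. Since $|h|\le mM$ almost surely, both $F_{N,h}$ and $F_h$ equal $0$ for $t<-mM$ and $1$ for $t\ge mM$. The integrand therefore vanishes outside $[-mM,mM]$, and Fubini gives
\[
\E \W_1(F_{N,h},F_h)=\int_{-mM}^{mM} \E|F_{N,h}(t)-F_h(t)|\,dt .
\]
For fixed $t$, $NF_{N,h}(t)$ is a sum of $N$ i.i.d.\ Bernoulli variables with parameter $p(t)=F_h(t)$. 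By Jensen,
\[
\E|F_{N,h}(t)-F_h(t)|\le \sqrt{\frac{p(t)(1-p(t))}{N}}\le \frac{1}{\sqrt N}\sqrt{\min\{F_h(t),1-F_h(t)\}}.
\]

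Next we bound the tails using only $\|h\|_{L_2}\le m$. Chebyshev's inequality gives $\min\{F_h(t),1-F_h(t)\}\le \P(|h|\ge |t|)\le m^2/t^2$ for $t\neq 0$, and trivially the quantity is at most $1$. Hence the integrand is at most $N^{-1/2}\min\{1,m/|t|\}$. Splitting the integral at $|t|=m$:
\[
\E \W_1(F_{N,h},F_h)\le \frac{1}{\sqrt N}\left(\int_{-m}^{m}1\,dt + 2\int_m^{mM}\frac{m}{t}\,dt\right)=\frac{2m}{\sqrt N}\bigl(1+\log M\bigr)=\frac{2m\log(eM)}{\sqrt N}.
\]
This is exactly the claimed bound, with the constant $2$ and the $\log(eM)$ factor. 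The only thing to check is the case $M\ge1$, which guarantees that $m\le mM$, so the split is legitimate.

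There is no real obstacle here. The one point requiring care is to use the variance bound $\sqrt{p(1-p)}\le\sqrt{\min\{p,1-p\}}$ rather than $\le 1/2$; the cruder bound would cost a factor of $M$ instead of $\log M$. The $L_\infty$ assumption is used only to truncate the domain of integration, and this truncation is what produces the logarithm: without it the integral $\int^{\infty} m/t\,dt$ would diverge.
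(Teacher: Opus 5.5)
Your proof is correct and follows essentially the paper's route: Fubini on the interval cut off by the $L_\infty$ bound, the pointwise bound $\E|F_{N,h}(t)-F_h(t)|\le\sqrt{F_h(t)(1-F_h(t))/N}$, the Chebyshev tail bound $m^2/t^2$, and integrating $1/t$ to get the logarithm. The only difference is that the paper first rescales to $w=h/m$ and integrates over $[-M,M]$, whereas you work directly at scale $m$ and track the constant $2m(1+\log M)=2m\log(eM)$ explicitly.
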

\begin{proof}
	Set $w=h/m$, thus $F_{N,h}(t) = F_{N,w}(t/m)$ and $F_{h}(t) = F_{w}(t/m)$ for $t\in\R$, and $F_{N,w}(t) = F_{ w} (t)\in\{0,1\}$ for $|t|>M$.
	By Fubini's theorem followed by a change of variables,
\begin{align*}
 \E \W_{1}(F_{N,h }, F_{ h } )  
 = \int_{\R} \E |F_{N,h }(s) - F_{ h} (s)| \, ds
= m \int_{-M}^{M} \E |F_{N,w}(t) - F_{ w} (t)| \, dt.
\end{align*}
Clearly, for every $t\in\R$,
\begin{align*}
\left( \E[|F_{N, w}(t) - F_{ w } (t)|^{2} ] \right)^{1/2}
=  \frac{ (F_{w}(t)  (1- F_{ w }(t)) )^{1/2} }{ \sqrt N} .
\end{align*}
Finally, by Markov's inequality, 
\[F_{w}(t)  (1- F_{ w} (t))\leq \|w\|_{L_{2}}^{2} t^{{-2}} \leq  t^{{-2}} ,\]
and  the claim follows because  $\int_{1}^{M} \frac{1}{t} \,dt= \log(M)$.
\end{proof}

\begin{lemma}
\label{lem:W1.Ss.Exp}
	There is an absolute constant $c$ such that for every $f\in\mathcal{F}$ and $r\in\{s_{0}-1,\dots, s_{1}-1\}$,
\[  \E \W_{1}(F_{N,S_{r}(f) }, F_{ S_{r}(f) } )  
\leq  cd_{\mathcal{F}}  \frac{\log(e/\Delta) }{\sqrt N}.\]
In particular,  
\[\E Z \leq  2c d_{\mathcal{F}} \frac{\log(e/\Delta) }{\sqrt N}.\]	
\end{lemma}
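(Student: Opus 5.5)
The plan is to apply Lemma \ref{lem:W1.exp.bounded} to $h=S_r(f)$ with $m\sim d_{\mathcal{F}}$ and $M\sim 1/\sqrt{\Delta}$ (up to absolute constants). This gives $\E\W_1(F_{N,S_r(f)},F_{S_r(f)})\le 2m\log(eM)/\sqrt N\lesssim d_{\mathcal{F}}\log(e/\Delta)/\sqrt N$, because $\log(e c/\sqrt\Delta)\lesssim \log(e/\Delta)$ when $\Delta\le 1$. The work is therefore to verify the two hypotheses of that lemma: an $L_2$ bound and an $L_\infty$ bound on $S_r(f)$.

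\emph{The $L_\infty$ bound.} Each truncated link satisfies $\|\phi_s(\Delta_s f)\|_{L_\infty}\le \|\Delta_s f\|_{L_2}\sqrt{N/2^s}$. The starting term satisfies $\|\phi_{s_0}(\pi_{s_0}f)\|_{L_\infty}\le d_{\mathcal{F}}\sqrt{N/2^{s_0}}\le d_{\mathcal{F}}/\sqrt\Delta$, since $2^{s_0}\ge \Delta N$. For the links we use $\|\Delta_s f\|_{L_2}\le \|f-\pi_{s+1}f\|_{L_2}+\|f-\pi_s f\|_{L_2}\le 2\cdot 2\,d_{\mathcal{F}}$, together with the geometric decay of $\sqrt{N/2^s}$ in $s\ge s_0$. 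This gives
\[
\sum_{s\ge s_0}\|\Delta_s f\|_{L_2}\sqrt{N/2^s}\lesssim d_{\mathcal{F}}\sqrt{N/2^{s_0}}\lesssim d_{\mathcal{F}}/\sqrt\Delta .
\]
Hence $\|S_r(f)\|_{L_\infty}\le c\, d_{\mathcal{F}}/\sqrt\Delta$.

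\emph{The $L_2$ bound.} A crude triangle inequality would lose a factor $s_1-s_0\sim\log(e/\Delta)$. Such a loss is harmless in the log of Lemma \ref{lem:W1.exp.bounded} only if it enters through $M$, not through $m$. So I will instead compare with the untruncated sum. We have $S_r(f)=\pi_{r+1}f-\psi_{s_0}(\pi_{s_0}f)-\sum_{s=s_0}^{r}\psi_s(\Delta_s f)$, and $\|\pi_{r+1}f\|_{L_2}\le d_{\mathcal{F}}$ since $\pi_{r+1}f\in\mathcal{F}$. For the error terms I will use the $L_2$ analogue of the computation in Lemma \ref{lem:reduction.to.truncation}: $\|\psi_s(\Delta_sf)\|_{L_2}\le\|\Delta_s f\|_{L_2}$ trivially. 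Summing these gives $\sum_s\|\Delta_sf\|_{L_2}\le 4\gamma_2(\mathcal{F})$, which could be larger than $d_{\mathcal{F}}$, so that crude bound is not enough.

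This is the main obstacle. The fix I would try first is to avoid needing a sharp $L_2$ bound at all. Lemma \ref{lem:W1.exp.bounded} is really a statement about tails: its proof only uses $F_w(t)(1-F_w(t))\le \P(|w|>t)$ for $t\ge1$. So it suffices to bound $\P(|S_r(f)|>t)$ by $c\,d_{\mathcal{F}}^2/t^2$. For this I split $S_r(f)=\pi_{r+1}f-E$, where $E$ is the truncation error. By the first-moment estimate \eqref{eq:pi.s1.vs.T.some.eq}, truncated at level $r$, we have
\[
\|E\|_{L_1}\le d_{\mathcal{F}}\sqrt\Delta+4\gamma_2(\mathcal{F})/\sqrt N .
\]
Then
\[
\P(|S_r(f)|>2t)\le d_{\mathcal{F}}^2/t^2+\|E\|_{L_1}/t .
\]
Integrating the term $\|E\|_{L_1}/t$ over $t\in[1,M]$ (after rescaling by $d_{\mathcal{F}}$) only contributes $\|E\|_{L_1}\log M$ inside the square root. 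That gives an extra term of order $\sqrt{\|E\|_{L_1}}$, which is not small enough on its own.

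If this fails, I fall back to using the specific truncation levels. On the event $|\Delta_s f|\le b_s$, the truncation $\phi_s$ is the identity, so $E\ne0$ only where some link exceeds its level. By Markov's inequality, $\P(|\Delta_sf|>b_s)\le 2^s/N$, and since $s\le r<s_1$ these probabilities are controlled. I would then bound $\P(|S_r(f)|>t)$ directly: it is at most $\P(|\pi_{r+1}f|>t)$ plus the probability that some link in the range $s\gtrsim\log_2(N/t^2)$-type levels is truncated. Summing the terms $2^s/N$ over the relevant $s$ should produce the required $d_{\mathcal{F}}^2/t^2$ tail up to constants, and from there Lemma \ref{lem:W1.exp.bounded}'s argument goes through. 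The bound for $\E Z$ then follows by the triangle inequality, applying the bound to $r$ and $r-1$.
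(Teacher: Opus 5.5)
Your $L_\infty$ bound is correct. It is even slightly more economical than the paper's, which routes the link sum through $\gamma_2(\mathcal{F})$. The problem is the second hypothesis of Lemma \ref{lem:W1.exp.bounded}, $\|S_r(f)\|_{L_2}\lesssim d_{\mathcal{F}}$: you never establish it, and the obstacle you describe is not real.

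You have overlooked the standing hypothesis of Theorem \ref{thm:main}: $\Delta N\ge c_1\theta^2/d_{\mathcal{F}}^2$ with $\theta>\E\sup_f G_f\gtrsim\gamma_2(\mathcal{F})$. Hence $2^{s_0}\ge\Delta N\ge\gamma_2^2(\mathcal{F})/d_{\mathcal{F}}^2$; the paper already uses this in its $L_\infty$ step. Because every link in $S_r(f)$ has $s\ge s_0$, the crude triangle inequality loses nothing:
\[
\sum_{s=s_0}^{r}\|\phi_s(\Delta_s f)\|_{L_2}
\le 2^{-s_0/2}\sum_{s\ge s_0}2^{s/2}\|\Delta_s f\|_{L_2}
\le \frac{4\gamma_2(\mathcal{F})}{2^{s_0/2}}
\le 4d_{\mathcal{F}}.
\]
Together with $\|\phi_{s_0}(\pi_{s_0}f)\|_{L_2}\le d_{\mathcal{F}}$, this gives $\|S_r(f)\|_{L_2}\le 5d_{\mathcal{F}}$, which is exactly how the paper concludes. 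Your estimate $\sum_s\|\Delta_s f\|_{L_2}\le 4\gamma_2(\mathcal{F})$ discards the factor $2^{-s_0/2}$. The feared loss of $s_1-s_0$ would only occur if you bounded each link by $2d_{\mathcal{F}}$. The same hypothesis would also have rescued your first alternative. It gives $\|E\|_{L_1}\le 5d_{\mathcal{F}}\sqrt\Delta$, and then the extra term $\sqrt{\|E\|_{L_1}/t}$ integrates over $t\le c\,d_{\mathcal{F}}/\sqrt\Delta$ to $O(d_{\mathcal{F}})$.

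Your fallback, as sketched, does not work. Comparing $S_r(f)$ with $\pi_{r+1}f$ requires that neither the starting term nor any link up to level $r$ be truncated. Summing $2^s/N$ over levels $s\gtrsim\log_2(N/t^2)$ up to $r$ gives order $2^r/N$, not $d_{\mathcal{F}}^2/t^2$, and this can be of order one when $r$ is close to $s_1$.

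A tail argument can be repaired, as follows. For a given $t$, choose $k$ with $2^k\sim d_{\mathcal{F}}^2N/t^2$, with the obvious modifications if $k\le s_0$ or $k>r+1$. Apply Markov to the starting term and to the coarse links $s<k$; outside an event of probability at most $2^k/N\lesssim d_{\mathcal{F}}^2/t^2$ one has $S_r(f)=\pi_kf+\sum_{s\ge k}\phi_s(\Delta_sf)$. Your geometric estimate bounds the fine part in $L_\infty$ by $\lesssim d_{\mathcal{F}}\sqrt{N/2^k}\lesssim t$. Hence $\P(|S_r(f)|>2t)\lesssim d_{\mathcal{F}}^2/t^2$, which is all the proof of Lemma \ref{lem:W1.exp.bounded} actually uses. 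That version would even dispense with the condition on $\gamma_2(\mathcal{F})$. But it is not what you wrote, and as it stands the proposal leaves the key $L_2$ (or tail) step open.
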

\begin{proof}
Observe that  $\| S_{r}(f) \|_{L_{\infty}} \leq  c_{1}  d_{\mathcal{F}} /\sqrt \Delta$.
Indeed, 
\[\|\phi_{s_0}(\pi_{s_{0}}f)\|_{L_{\infty}}  \leq  d_{\mathcal{F}} \sqrt{ \frac{N}{2^{s_{0}}}}\quad\text{and}\quad
\|\phi_{\ell}(\Delta_{\ell}f)\|_{L_{\infty}} \leq  \|\Delta_{\ell}f\|_{L_{2}} \sqrt{\frac{N}{2^{\ell}}} .
\]
Therefore,
\begin{align*}
\begin{split}
\|S_{r}(f) \|_{L_{\infty}} 
&\leq \sum_{\ell=s_{0}}^{r} \|\phi_{\ell}(\Delta_{\ell}f) \|_{L_{\infty}}  +  \|\phi_{s_0}(\pi_{s_{0}}f)\|_{L_{\infty}} \\
&\leq  \sum_{\ell=s_{0}}^{r}  \|\Delta_{\ell} f\|_{L_{2}}\frac{ 2^{\ell/2}}{2^{s_{0}}} \sqrt{N} + d_{\mathcal{F}}\sqrt\frac{N}{2^{s_{0}}} \\ 
&\leq  \frac{4\gamma_{2}(\mathcal{F}) \sqrt N}{ \Delta N} + d_{\mathcal{F}}\sqrt\frac{1}{\Delta } 
\leq 5d_{\mathcal{F}} \sqrt\frac{1}{\Delta } ,
\end{split}
\end{align*}
where we used that $2^{s_{0}}\geq \Delta N$
and that  $ \gamma_{2}^{2}(\mathcal{F}) \leq d_{\mathcal{F}}^{2} \Delta N $. 

Next, note that $\|S_{r}(f) \|_{L_{2}}\leq c_2d_{\mathcal{F}} $.
This holds because  $\| \phi_{s_0}(\pi_{s_{0}}f)\|_{L_{2}}  \leq \| \pi_{s_{0}}f\|_{L_{2}}$ and  $\|\phi_{\ell}(\Delta_{\ell}f) \|_{L_{2}}\leq  \|\Delta_{\ell}f\|_{L_{2}}$; hence 
\begin{align*}
\begin{split}
\|S_{r}(f) \|_{L_{2}}
&\leq \sum_{\ell=s_{0}}^{r} \|\phi_{\ell}(\Delta_{\ell}f) \|_{L_{2}}+ \| \phi_{s_0}(\pi_{s_{0}}f)\|_{L_{2}} \\
&\leq   \frac{4\gamma_{2}(\mathcal{F}) }{ 2^{s_{0}/2}} + d_{\mathcal{F}}.
\end{split}
\end{align*}
Recalling that $2^{s_{0}}\geq\Delta N \geq \gamma_{2}^{2}(\mathcal{F})/d_{\mathcal{F}}^{2}$, it is evident that $\|S_{r}(f) \|_{L_{2}}\leq 5d_{\mathcal{F}}$.

The first part of the lemma follows from Lemma \ref{lem:W1.exp.bounded} and the second one since
\[\E Z \leq \E \W_{1}(F_{N,S_{r}(f) }, F_{ S_{r}(f) } )  + \E \W_{1}(F_{N,S_{r-1}(f) }, F_{ S_{r-1}(f) } ).\qedhere\]
\end{proof}

\subsubsection{Concentration around $\E Z$}

The argument is based on the generalization of the Efron-Stein inequality from \cite{boucheron2005moment}.

Set  $X_{1}',\dots,X_{N}'$ to be independent copies of $X$ that are independent of $X_{1},\dots,X_{N}$, and let
\[\P^{(j)}_{N} = \frac{1}{N} \left( \delta_{X_{1}}+ \cdots \delta_{X_{j-1}}+\delta_{X_{j}'} + \delta_{X_{j+1}}+\cdots \delta_{X_{N}}\right).\]
Denote
\[ Z^{(j)} = \int_{\R} \left| [\P^{(j)}_{N}-\P]( S_{r}(f) \leq t)  - [\P^{(j)}_{N}-\P]( S_{r-1}(f) \leq t) \right| \,dt \]
and put 
\[V=\sum_{j=1}^{N} \E\left[ \left(Z-Z^{(j)} \right)^{2} \mid (X_{i})_{i=1}^{N} \right].\]
Here, the conditional expectation  means that the expectation is taken only with respect to the  $X_j'$.

Using this notation, a direct consequence of Theorem 2 in \cite{boucheron2005moment} is the following:

\begin{theorem}[{\cite[Theorem 2]{boucheron2005moment}}]
\label{thm:BBLM}
For every $q\geq 2$,
\[ \|Z - \E Z\|_{L_{q}}
\leq 6 \sqrt{q} \|\sqrt V\|_{L_{q}}.\]
\end{theorem}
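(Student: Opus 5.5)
The plan is to deduce the statement directly from the moment inequalities of \cite{boucheron2005moment} rather than reprove them. Recall their setting: $Z=g(X_1,\dots,X_N)$ is a function of independent variables, $Z^{(j)}$ is obtained by replacing $X_j$ with an independent copy $X_j'$, and
\[
V^{+}=\sum_{j=1}^N \E\bigl[(Z-Z^{(j)})_+^2 \mid (X_i)_{i=1}^N\bigr],\qquad
V^{-}=\sum_{j=1}^N \E\bigl[(Z-Z^{(j)})_-^2 \mid (X_i)_{i=1}^N\bigr].
\]
Their Theorem 2 states that for every $q\geq 2$,
\[
\|(Z-\E Z)_+\|_{L_q}\leq \sqrt{\kappa q\,\|V^+\|_{L_{q/2}}},\qquad
\|(Z-\E Z)_-\|_{L_q}\leq \sqrt{\kappa q\,\|V^-\|_{L_{q/2}}},
\]
with an absolute constant $\kappa<2$ (in fact $\kappa\approx 1.271$).

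The first step is to check that our $Z$ fits this framework. By definition, $Z$ is a measurable function of the independent sample $X_1,\dots,X_N$, and $Z^{(j)}$ is exactly the same function evaluated with $X_j$ replaced by $X_j'$. We also need $Z$ to have finite moments. This holds because $S_r(f)$ and $S_{r-1}(f)$ are bounded functions (see the $L_\infty$ estimate in the proof of Lemma \ref{lem:W1.Ss.Exp}). The integrand in $Z$ therefore vanishes outside a bounded interval, and it is bounded by $4$, so $Z$ is a bounded random variable.

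Next, since $(Z-Z^{(j)})_\pm^2\leq (Z-Z^{(j)})^2$ pointwise, we have $V^{+}\leq V$ and $V^-\leq V$. Hence $\|V^{\pm}\|_{L_{q/2}}\leq \|V\|_{L_{q/2}}=\|\sqrt V\|_{L_q}^2$.

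Finally, write $|Z-\E Z|=(Z-\E Z)_+ +(Z-\E Z)_-$ and apply the triangle inequality in $L_q$. This gives
\[
\|Z-\E Z\|_{L_q}\leq 2\sqrt{\kappa q}\,\|\sqrt V\|_{L_q}\leq 6\sqrt q\,\|\sqrt V\|_{L_q},
\]
because $2\sqrt\kappa<6$. This proves the statement.

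There is no genuine obstacle here; the only point requiring care is matching the conventions. \cite{boucheron2005moment} define $V^\pm$ via conditional expectation over the independent copy, exactly as $V$ is defined here, and they require only square integrability, which our boundedness supplies. If one wanted a self-contained argument instead, one would reproduce their entropy-method proof. That proof applies the modified logarithmic Sobolev (tensorization) inequality to $(Z-\E Z)_+^{q-1}$ and then runs an induction on $q$ to bound the moment recursion. That induction would be the main technical step, but it is not needed given the cited result.
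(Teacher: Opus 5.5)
Your proposal is correct and follows the route the paper intends: the paper states this as a direct consequence of \cite[Theorem 2]{boucheron2005moment}, which amounts to applying that theorem to $(Z-\E Z)_{+}$ and $(Z-\E Z)_{-}$, using $V^{\pm}\le V$ pointwise, and adding the two bounds, exactly as you do. Check the precise constant in the cited inequality, which may be stated as $\sqrt{2\kappa q}$ with $\kappa<1.271$ rather than $\sqrt{\kappa q}$; this does not affect your conclusion, since $2\sqrt{2\cdot 1.271}<4$ leaves ample room under the factor $6$.
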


The proof of Lemma \ref{prop:main} is based on Theorem \ref{thm:BBLM} for $q=2^{r+5}$: by Markov's inequality,
\[ \P\left(|Z-\E Z | \geq e 6 \sqrt{q} \|\sqrt V\|_{L_{q}} \right) 
\leq \frac{1}{e^{q}} = \exp(-2^{r+5}).\]

\begin{remark}
In light of the wanted chaining bound, we need to show that 
\[ \|\sqrt V\|_{L_{q}} \leq c \|\Delta_{r} f\|_{L_{2}}/\sqrt{N}\]
 for that choice of $q$.
Such an estimate implies that  $\|\sqrt V\|_{L_{q}}\sim \|\sqrt V\|_{L_{1}}$, and as the proof reveals, this stability is true only up $q\sim 2^{r}$; for higher moments $\|\sqrt V\|_{L_{q}}$ grows with $q$.
\end{remark}

\begin{lemma}
\label{lem:control.V.1}
For every $q\geq 2$, 
\[\|\sqrt V\|_{L_{q}}
\leq \frac{2 \|\Delta_{r}f \|_{L_{2}}}{\sqrt N}  +  \frac{2}{N}  \left\|  \sqrt{ \sum_{j=1}^{N}  \phi_{r}^{2}(\Delta_{r}f(X_{j}))  }\right\|_{L_{q}}.\]
\end{lemma}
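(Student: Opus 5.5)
The plan is to bound $|Z-Z^{(j)}|$ pathwise and then sum over $j$. Write $D_r = S_r(f)-S_{r-1}(f) = \phi_r(\Delta_r f)$. For fixed $t$, the integrand of $Z$ is $|a(t)|$ with
\[ a(t)= [\P_N-\P](S_r\le t) - [\P_N-\P](S_{r-1}\le t). \]
Replacing $X_j$ by $X_j'$ changes only the empirical parts, each by $\frac1N$ times an indicator difference. Hence
\[ |Z-Z^{(j)}| \le \frac1N\int_\R \Big| \big(1_{\{S_r(X_j)\le t\}}-1_{\{S_{r-1}(X_j)\le t\}}\big) - \big(1_{\{S_r(X_j')\le t\}}-1_{\{S_{r-1}(X_j')\le t\}}\big) \Big|\,dt . \]
The key observation is that $\int_\R |1_{\{u\le t\}}-1_{\{v\le t\}}|\,dt = |u-v|$. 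Applying this at $X_j$ and at $X_j'$ gives
\[ |Z-Z^{(j)}| \le \frac1N\big( |\phi_r(\Delta_r f(X_j))| + |\phi_r(\Delta_r f(X_j'))| \big). \]

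Next we square, use $(a+b)^2\le 2a^2+2b^2$, and take the conditional expectation over $X_j'$. This gives
\[ \E\big[(Z-Z^{(j)})^2\mid (X_i)\big] \le \frac{2}{N^2}\Big(\phi_r^2(\Delta_r f(X_j)) + \E\,\phi_r^2(\Delta_r f(X))\Big). \]
Since $|\phi_r(x)|\le |x|$, we have $\E\phi_r^2(\Delta_rf)\le \|\Delta_r f\|_{L_2}^2$. Summing over $j$,
\[ V\le \frac{2}{N}\|\Delta_r f\|_{L_2}^2 + \frac{2}{N^2}\sum_{j=1}^N \phi_r^2(\Delta_r f(X_j)). \]
Then $\sqrt{a+b}\le\sqrt a+\sqrt b$ gives
\[ \sqrt V \le \frac{\sqrt2\,\|\Delta_r f\|_{L_2}}{\sqrt N} + \frac{\sqrt2}{N}\Big(\sum_{j=1}^N \phi_r^2(\Delta_r f(X_j))\Big)^{1/2}. \]
Taking $L_q$ norms and applying Minkowski yields the claimed bound, with $\sqrt2\le 2$.

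The one step needing care is the pathwise bound on $|Z-Z^{(j)}|$. I would handle it via $\big||a|-|b|\big|\le|a-b|$ applied inside the integral, which reduces it to the integral of a difference of four indicators. I would then split by the triangle inequality into the $X_j$-pair and the $X_j'$-pair, and evaluate each via the identity above. The truncation $\phi_r$ plays no role here beyond $|\phi_r(x)|\le|x|$. The real difficulty, bounding the $L_q$ norm of the empirical sum for $q\sim 2^r$, is deferred to the next step, where the truncation level $\|\Delta_rf\|_{L_2}\sqrt{N/2^r}$ will matter.
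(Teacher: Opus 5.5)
Your proposal is correct and follows the paper's proof step for step. The paper also bounds $|Z-Z^{(j)}|\le \frac1N\big(|\phi_r(\Delta_rf(X_j))|+|\phi_r(\Delta_rf(X_j'))|\big)$ via the Lebesgue measure of the set where the two indicators differ, then uses $(a+b)^2\le 2(a^2+b^2)$, $\|\phi_r(\Delta_rf)\|_{L_2}\le\|\Delta_rf\|_{L_2}$ and subadditivity of the square root; your constant $\sqrt2$ in place of $2$ is just tighter bookkeeping of the same argument.
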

\begin{proof}
The expectations appearing in $Z$ and $Z^{(j)}$ are the same, and by the triangle inequality
\begin{align*}
&|Z-Z^{(j)}|\\
&\leq \int_{\R} \left| [\P_{N} - \P_{N}^{(j)}] (S_{r}(f)\leq t ) +  [\P_{N} - \P_{N}^{(j)}] (S_{r-1}(f)\leq t )   \right| \,dt\\
&=  \frac{1}{N}\int_{\R} \left| \left( 1_{\{ S_{r}(f)(X_{j})\leq t \}}-  1_{ \{S_{r}(f)(X_{j}')\leq t \} } \right) - \left( 1_{ \{ S_{r-1}(f)(X_{j})\leq t \} }   - 1_{ \{ S_{r-1}(f)(X_{j}')\leq t \}}   \right) \right| \,dt.
\end{align*}

Next, $S_{r}(f)  = S_{r-1}(f)  + \phi_{r}(\Delta_{r }f )$; therefore,
\[ \{S_{r}(f) \leq t\} = \{ S_{r-1}(f)  + \phi_{r}(\Delta_{r }f )\leq t\} , \]
and the Lebesgue measure of the set 
\[ \left\{ t\in\R : 1_{ \{ S_{r}(f)(X_{j})\leq t \} } - 1_{ \{ S_{r-1}(f)(X_{j})\leq t \} }  \neq 0 \right\}\]
is $|\phi_{r}(\Delta_{r}f(X_{j}))|$.
Using an identical argument for  $X_{j}'$ in place of $X_j$, it is evident that
\[ |Z-Z^{(j)}| \leq  \frac{1}{N} \left(  |\phi_{r}(\Delta_{r}f(X_{j}))|  +   |\phi_{r}(\Delta_{r}f(X_{j}'))| \right),\]
and as $(a+b)^{2}\leq 2 (a^{2} + b^{2})$ for $a,b\in\R$, and $\| \phi_{r}(\Delta_{r}f)\|_{L_{2}}\leq \|\Delta_{r}f\|_{L_{2}}$, we have
\[ \E\left[ \left(Z-Z^{(j)} \right)^{2} \mid (X_{i})_{i=1}^{N}\right] 
\leq  \frac{2}{N^{2}} \left(  \phi_{r}^{2}(\Delta_{r}f(X_{j}))  +   \|\Delta_{r}f \|_{L_{2}}^{2} \right) . \]
The claim now follows from sub-additivity of $a\mapsto \sqrt{a}$.
\end{proof}

With Lemma \ref{lem:control.V.1} in mind, all that remains is to establish a satisfactory estimate on 
\[\left\|  \sqrt{ \sum_{j=1}^{N}  \phi_{r}^{2}(\Delta_{r}f(X_{j}))  }\right\|_{L_{q}}:\]
To that end, we use the following bound on moments of sums of independent random variables established in \cite{latala1997estimation}:

\begin{theorem}[{\cite[Corollary 1]{latala1997estimation}}]
\label{thm:latala}
Let $Y$ be a nonnegative random variable and set $Y_{i},\dots, Y_{N}$ to be independent copies of $Y$.
Then, for every $q\geq 1$,
\[ \left\|  \sum_{j=1}^{N}  Y_{j} \right\|_{L_{q}} 
\sim \sup\left\{ \frac{q}{p} \left( \frac{ N}{q} \right)^{1/p}  \| Y  \|_{L_{p}} : \max\left\{1,\frac{q}{N}\right\} \leq p \leq q \right\}.\]
\end{theorem}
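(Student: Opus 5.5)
Since the statement is quoted from \cite{latala1997estimation}, I would reconstruct it in two steps. First comes a general characterization of moments of sums of independent nonnegative variables. Then I would specialise it to the i.i.d.\ case by a direct computation. The intermediate statement I would aim for is
\[
\Big\|\sum_{j=1}^N Y_j\Big\|_{L_q} \sim \inf\Big\{ t>0 : \sum_{j=1}^N \log \E\big(1+Y_j/t\big)^q \le q \Big\},
\]
valid for independent nonnegative $Y_j$ and $q\ge1$. In the i.i.d.\ case the constraint becomes $\E(1+Y/t)^q\le e^{q/N}$.

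\textbf{Upper bound in the general statement.} This is elementary. For nonnegative $a_j$ one has $1+\sum_j a_j\le \prod_j(1+a_j)$. Hence, by independence,
\[
\E\Big(\sum_j Y_j/t\Big)^q\le \prod_j \E(1+Y_j/t)^q\le e^q
\]
whenever $t$ is admissible, which gives $\|\sum_j Y_j\|_{L_q}\le e\,t$.

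\textbf{Lower bound in the general statement.} This is the main obstacle. I would first reduce to the case $\sum_j \log\E(1+Y_j/t)^q = q$ exactly at the critical $t$. Then I would show $\E(\sum_j Y_j)^q \gtrsim (ct)^q$ in two regimes.

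If a single term dominates, i.e.\ $\log\E(1+Y_j/t)^q\ge q/2$ for some $j$, the lower bound follows from $\sum_j Y_j\ge Y_j$ alone.

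Otherwise, I would expand $(\sum_j Y_j)^q$ and keep only the contributions of products over disjoint index blocks. The sum can be split greedily into groups $I_1,\dots,I_k$ with $\sum_{j\in I_\ell}\log\E(1+Y_j/t)^q\sim q/k$. Using $\log(1+x)\le x$, each group then has $\sum_{j\in I_\ell}\E[(1+Y_j/t)^q-1]\gtrsim q/k$. From this one can extract, with probability that is not too small, a contribution of order $t$ from each group. Independence across groups then yields the $q$-th moment bound. Making this bookkeeping clean is where I expect most of the difficulty; I would follow Latała's combinatorial route, using the inequality $\E(\sum_j Y_j)^q \ge \sum_{|J|=k}\prod_{j\in J}\E Y_j^{p_j}$ for appropriate exponents $p_j$ summing to $q$.

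\textbf{Specialisation to the i.i.d.\ case.} This is a calculation. By the binomial expansion,
\[
\E(1+Y/t)^q \approx \max_{1\le p\le q}\Big(\frac{eq\|Y\|_{L_p}}{p\,t}\Big)^p
\]
up to constants in the base and the usual care for non-integer $q$. So $\log\E(1+Y/t)^q\le q/N$ holds, up to constants, if and only if $t\gtrsim \frac{q}{p}\big(\frac{N}{q}\big)^{1/p}\|Y\|_{L_p}$ for every $p$. Two boundary effects restrict the range of $p$ to $\max\{1,q/N\}\le p\le q$. For $p<q/N$ the term $(N/q)^{1/p}$ makes the constraint dominated by the one at $p=q/N$, since the $\log(1+x)$ versus $x$ distinction kicks in there. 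For $p>q$ the binomial coefficients vanish, or are dominated by $p=q$. Taking the infimum over admissible $t$ gives exactly the stated supremum.
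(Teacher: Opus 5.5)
The paper gives no proof of this statement. It quotes it from Latała and uses only the direction $\lesssim$, in Lemma \ref{lem:control.V.2}. Your two-step architecture matches Latała's: first the functional $\inf\{t:\sum_j\log\E(1+Y_j/t)^q\le q\}$ for independent nonnegative summands, then an i.i.d.\ computation. Your upper bound $\|\sum_jY_j\|_{L_q}\le e\,t$ is correct.

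\textbf{The lower bound of the general statement is not established.} The step ``extract, with probability that is not too small, a contribution of order $t$ from each group'' does not follow from $\sum_{j\in I_\ell}\E[(1+Y_j/t)^q-1]\gtrsim q/k$. A single summand equal to $At$ with probability $(q/k)A^{-q}$ satisfies that bound, yet exceeds $t$ only with that tiny probability. Sizes and probabilities have to be tracked jointly, and your sketch does not do this. The multinomial inequality you quote also lacks the factors $(q/p_j)^{p_j}$, so it cannot recover the binomial coefficients hidden in $\E(1+Y_j/t)^q$.

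One clean way to close the gap is to peel off one summand at a time.
\begin{enumerate}
\item Normalize $\|\sum_jY_j\|_{L_q}=1$, put $t=2$ and $Z_k=1+(Y_1+\dots+Y_{k-1})/t\ge1$.
\item On $\{Z_k>4\}$ one has $Z_k\le\frac{4}{3t}(Y_1+\dots+Y_{k-1})$. Consider the tilted law $d\widetilde{\P}=Z_k^q\,d\P/\E Z_k^q$, and note $\E Z_k^q\ge1$. Then $\widetilde{\P}(Z_k>4)\le(2/3)^q\le2/3$, hence $\widetilde{\E}Z_k^{-1}\ge1/12$.
\item Jensen for the convex map $u\mapsto(1+yu)^q$ gives, for every $y\ge0$,
\[
\E(Z_k+y)^q=\E Z_k^q\cdot\widetilde{\E}\Big(1+\frac{y}{Z_k}\Big)^q\ge\E Z_k^q\Big(1+\frac{y}{12}\Big)^q .
\]
By independence, $\E Z_{k+1}^q\ge\E Z_k^q\,\E(1+Y_k/24)^q$.
\item Iterating, $\prod_j\E(1+Y_j/24)^q\le\E(1+\frac12\sum_jY_j)^q\le(3/2)^q\le e^q$. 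So the infimum is at most $24\|\sum_jY_j\|_{L_q}$.
\end{enumerate}

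\textbf{The specialization also needs repair.} Your display $\E(1+Y/t)^q\approx\max_p(\cdots)^p$ cannot be right as stated. For $q\le N$ the threshold $e^{q/N}$ is $1+O(q/N)$, so the leading $1$ cannot be absorbed into constants. Taken literally, the display would only give $t\gtrsim\frac qp\|Y\|_{L_p}$.

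What is needed is $\E(1+Y/t)^q-1\approx\sum_{1\le p\le q}\big(cq\|Y\|_{L_p}/(pt)\big)^p$, compared with $e^{q/N}-1\sim q/N$. The factor $(N/q)^{1/p}$ comes exactly from this comparison. Constants pass into $t$ via the convexity bound $\E(1+Y/(Ct))^q-1\le C^{-1}\big(\E(1+Y/t)^q-1\big)$ for $C\ge1$.

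For $q>N$ the constraint becomes, up to constants, $t\gtrsim\frac qp e^{-q/(Np)}\|Y\|_{L_p}$. This matches $\frac qp(N/q)^{1/p}\|Y\|_{L_p}$ only when $p\ge q/N$. For $p<q/N$ it is dominated by its value at $p=q/N$, because $u\mapsto ue^{-u}$ decreases on $[1,\infty)$ and $\|Y\|_{L_p}\le\|Y\|_{L_{q/N}}$.

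Your stated reason, that the $(N/q)^{1/p}$ expression itself is dominated there, is false. For $N=1$, $Y\equiv1$ and $p=\log q$ it equals $q/(e\log q)$, while $\|Y\|_{L_q}=1$. So the lower limit $\max\{1,q/N\}$ is essential, not a boundary effect.

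For the use made in the paper (the direction $\lesssim$), your elementary upper bound together with this corrected computation already suffices.
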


We also need the standard observation that if $\|Y\|_{L_{\infty}}\leq M$ and $\|Y\|_{L_{1}} \leq m$, then for every $p\geq 1$, 
\begin{align}
\label{eq:moment.growth}
\|Y\|_{L_{p}} \leq M \left(\frac{m}{M}\right)^{1/p}.
\end{align}
Indeed,  $|Y|^p = |Y|^{p-1} |Y| \le M^{p-1} |Y|$ and  the claim follows by taking expectations.

\begin{lemma}
\label{lem:control.V.2}
	There is an absolute constant $c$ and for  $q=2^{r+5}$,
\[ \left\|  \sqrt{  \sum_{j=1}^{N}  \phi_{r}^{2}(\Delta_{r}f(X_{j}))  }\right\|_{L_{q}} 
\leq c  \|\Delta_{r}f \|_{L_{2}}  \sqrt N .\] 
\end{lemma}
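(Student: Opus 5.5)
We will apply Theorem \ref{thm:latala} to $Y=\phi_r^2(\Delta_r f(X))$. Write $a=\|\Delta_r f\|_{L_2}$. The left-hand side of the statement equals $\|\sum_j Y_j\|_{L_{q/2}}^{1/2}$, so it suffices to show
\[
\Bigl\|\sum_{j=1}^N Y_j\Bigr\|_{L_{q/2}}\le c\, a^2 N .
\]
First we record the two quantities needed for \eqref{eq:moment.growth}. By the definition of the truncation $\phi_r$ at level $a\sqrt{N/2^r}$, we have $\|Y\|_{L_\infty}\le M:=a^2 N/2^r$. Since $|\phi_r(x)|\le |x|$, we also have $\|Y\|_{L_1}\le m:=a^2$. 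Hence, for every $p\ge 1$,
\[
\|Y\|_{L_p}\le M\Bigl(\frac{m}{M}\Bigr)^{1/p}=a^2\,\frac{N}{2^r}\Bigl(\frac{2^r}{N}\Bigr)^{1/p}.
\]

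Next we plug this into Theorem \ref{thm:latala} with moment $q'=q/2=2^{r+4}$, taking the supremum over $p$ with $\max\{1,q'/N\}\le p\le q'$. Since $r<s_1$ and $2^{s_1}\le c'N$, we get $q'\le c''N$, so the range of $p$ essentially starts at $1$. The term to bound is
\[
\frac{q'}{p}\Bigl(\frac{N}{q'}\Bigr)^{1/p}a^2\,\frac{N}{2^r}\Bigl(\frac{2^r}{N}\Bigr)^{1/p}
= a^2\,\frac{q'}{p}\,\frac{N}{2^r}\Bigl(\frac{2^r}{q'}\Bigr)^{1/p}
= a^2 N\,\frac{16}{p}\,16^{-1/p},
\]
using $q'=16\cdot 2^r$. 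The factor $\frac{16}{p}16^{-1/p}$ is bounded by an absolute constant for all $p\ge 1$; at $p=1$ it equals $1$, and it is bounded for larger $p$. This gives $\|\sum_j Y_j\|_{L_{q'}}\le c\,a^2N$, and taking square roots proves the lemma.

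The key point is that the truncation level $a\sqrt{N/2^r}$ is calibrated exactly so that the $N$- and $2^r$-dependence cancels when $q\sim 2^r$. The main technical care is checking the endpoint of the $p$-range. If $q'>N$, the lower limit is $p=q'/N$, which is at most an absolute constant, and the same bound holds. This is the only step where the constraint $2^{r}\lesssim N$, coming from $r<s_1$, is used.
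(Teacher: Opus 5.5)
Your proposal is correct and is essentially the paper's proof: you apply Lata{\l}a's moment bound to $Y=\phi_r^2(\Delta_r f(X))$ together with the $L_1$--$L_\infty$ interpolation \eqref{eq:moment.growth}. The only cosmetic difference is that you use the exact identity $\|(\sum_j Y_j)^{1/2}\|_{L_q}=\|\sum_j Y_j\|_{L_{q/2}}^{1/2}$ and work with moment $q/2$, whereas the paper passes to the $L_q$ norm of $\sum_j Y_j$ via Jensen.

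Your closing remark about the endpoint of the $p$-range is unnecessary. The bound $\frac{16}{p}16^{-1/p}\le c$ holds for every $p\ge 1$, so restricting to $p\ge\max\{1,q'/N\}$ can only shrink the supremum. Hence the constraint $2^r\lesssim N$ is never actually used.
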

\begin{proof}
Thanks to Jensen's inequality, it suffices to show that
\[ 
\left\|   \sum_{j=1}^{N}  \phi_{r}^{2}(\Delta_{r}f(X_{j}))  \right\|_{L_{q}} 
\leq c \|\Delta_{r} f\|_{L_{2}}^{2} N. \]
Set $Y= \phi_{r}^{2}(\Delta_{r}f(X))$ and put $Y_{j} = \phi_{r}^{2}(\Delta_{r}f(X_{j}))$.
By the definition of $\phi_{r}$,
\begin{align*}
 \|  Y  \|_{L_1} 
\leq \|\Delta_{r} f\|_{L_{2}}^{2} \quad\text{and}\quad 
\|Y\|_{L_{\infty}}
 \leq  \|\Delta_{r} f\|_{L_{2}}^{2} \frac{N}{2^{r}},
\end{align*}
and by \eqref{eq:moment.growth},  for any $p\geq 1$,
\[ \|Y\|_{L_{p}} \leq  \| \Delta_{r} f \|_{L_2}^{2}  \frac{N}{2^{r}} \left( \frac{2^{r}}{N}  \right)^{1/p}.\]
In particular, for $q=2^{r+5}$, we have that
\begin{align*}
&\sup\left\{ \frac{q}{p} \left( \frac{ N}{q} \right)^{1/p}  \| Y  \|_{L_{p}} : \max\left\{1,\frac{q}{N}\right\} \leq p \leq q \right\} \\
& \quad\leq 2^{5} \| \Delta_{r}f\|_{L_2}^{2}     \cdot \sup\left\{ \frac{2^{r}}{p} \left( \frac{ N}{2^{r}} \right)^{1/p}  \frac{N}{2^{r}} \left( \frac{2^{r}}{N}  \right)^{1/p} : 1 \leq p \leq 2^{r+5} \right\} \\
&\quad = 2^{5} \| \Delta_{r}f\|_{L_2}^{2}  \cdot N,
\end{align*}
and the claim follows from Theorem \ref{thm:latala}.
\end{proof}

With all the ingredients in place, the proof of  Lemma \ref{prop:main} is evident:

\begin{proof}[Proof of Lemma \ref{prop:main}]
	Set $q=2^{r+5}$.
	By Theorem \ref{thm:BBLM} and Markov's inequality, with probability at least $1-\exp(-2^{r+5})$,
	\[ Z \leq \E Z + c_{1} \sqrt{2^{r}} \|\sqrt V\|_{L_{q}}.\]
	Moreover, by Lemma \ref{lem:control.V.1} and Lemma \ref{lem:control.V.2}, 
	\[ \|\sqrt V\|_{L_{q}} \leq c_{2} \frac{ \|\Delta_{r} f\|_{L_{2}}}{\sqrt N },\] 
	and using Lemma \ref{lem:W1.Ss.Exp}, 
	\[\E Z \leq c_{3} d_{\mathcal{F}}  \frac{ \log(e/\Delta) }{ \sqrt N }.\qedhere\]
\end{proof}

\subsection{Controlling $\mathcal{E}_{2}(f)$}

Turning to the starting point of each chain, we have the following:

\begin{lemma}
\label{prop:start.of.chain}
	There is an absolute constant $c$ such that with probability at least $1-\exp(-4\Delta N)$, for every $f\in\mathcal{F}$,
\[ \mathcal{E}_{2}(f) \leq c d_{\mathcal{F}} \left(   \frac{\log(1/\Delta )}{\sqrt N} +\sqrt\Delta \right). \]
\end{lemma}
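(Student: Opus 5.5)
The plan is to treat $\mathcal{E}_{2}(f)=\W_{1}(F_{N,h},F_{h})$ with $h=\phi_{s_0}(\pi_{s_0}f)$ directly. There are only $|\mathcal{F}^\theta_{s_0}|\le 2^{2^{s_0}}\le \exp(2^{s_0})$ such functions, and $2^{s_0}\sim \Delta N$. So it suffices to prove, for a single fixed $h$, a deviation bound that holds with probability at least $1-\exp(-2^{s_0+3})$, and then take a union bound. Each such $h$ satisfies $\|h\|_{L_2}\le d_{\mathcal{F}}$ and $\|h\|_{L_\infty}\le d_{\mathcal{F}}\sqrt{N/2^{s_0}}\le d_{\mathcal{F}}/\sqrt{\Delta}$. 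By Lemma \ref{lem:W1.exp.bounded} with $m=d_{\mathcal{F}}$ and $M=1/\sqrt\Delta$, this gives $\E \W_1(F_{N,h},F_h)\le c\, d_{\mathcal{F}}\log(e/\Delta)/\sqrt N$, which accounts for the first term of the claim.

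For concentration I would use the same route as in Lemma \ref{prop:main}: Theorem \ref{thm:BBLM} with $q=2^{s_0+5}$, applied to $Z=\W_1(F_{N,h},F_h)$ and its resampled versions $Z^{(j)}$. Replacing $X_j$ by $X_j'$ changes $F_{N,h}$ only on an interval of length $|h(X_j)-h(X_j')|$, and only by $1/N$. Hence $|Z-Z^{(j)}|\le \frac1N(|h(X_j)|+|h(X_j')|)$. Exactly as in Lemma \ref{lem:control.V.1},
\[
\|\sqrt V\|_{L_q}\le \frac{2d_{\mathcal{F}}}{\sqrt N}+\frac{2}{N}\Big\|\Big(\sum_{j} h^2(X_j)\Big)^{1/2}\Big\|_{L_q}.
\]
Next apply Theorem \ref{thm:latala} to $Y=h^2$, using $\|Y\|_{L_1}\le d_{\mathcal{F}}^2$, $\|Y\|_{L_\infty}\le d_{\mathcal{F}}^2 N/2^{s_0}$, and \eqref{eq:moment.growth}. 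The computation of Lemma \ref{lem:control.V.2} with $r=s_0$ then gives $\|\sum_j Y_j\|_{L_q}\le c\,d_{\mathcal{F}}^2 N$. Consequently $\sqrt q\,\|\sqrt V\|_{L_q}\le c\, d_{\mathcal{F}}\sqrt{2^{s_0}/N}\le c' d_{\mathcal{F}}\sqrt\Delta$, since $2^{s_0}\le 2\Delta N$ by minimality of $s_0$; if needed one may assume $\Delta N\ge 1$. Markov's inequality at level $q$ then yields $Z\le \E Z+c\,d_{\mathcal{F}}\sqrt\Delta$ with probability at least $1-\exp(-2^{s_0+5})$.

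Taking the union bound over the at most $\exp(2^{s_0})$ functions $h$ gives failure probability at most $\exp(-2^{s_0+4})\le\exp(-4\Delta N)$, as required. The main point to check is that the truncation level $\sqrt{N/2^{s_0}}$ is exactly calibrated: it must make the Latała supremum at $q\sim 2^{s_0}$ of order $N d_{\mathcal{F}}^2$, and at the same time keep the entropy $2^{2^{s_0}}$ below the available probability $e^{-q}$. Both requirements match because $s_0$ is defined by $2^{s_0}\sim \Delta N$.
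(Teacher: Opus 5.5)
Your proposal is correct and follows the paper's approach: the paper omits this proof, saying only that it follows from the same Boucheron--Bousquet--Lugosi--Massart and Lata{\l}a moment argument as Lemma \ref{prop:main} without chaining, which is exactly what you carry out for $\phi_{s_0}(\pi_{s_0}f)$ with $q=2^{s_0+5}$ and a union bound over $\mathcal{F}^\theta_{s_0}$. Two small bookkeeping points: Lemma \ref{lem:W1.exp.bounded} needs $M=1/\sqrt\Delta\ge 1$, which is the paper's standing reduction $\Delta\le 1$; and the $1/\sqrt N$ part of $\log(e/\Delta)/\sqrt N$ is absorbed into $\sqrt\Delta$ because $\Delta\ge 1/N$.
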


Lemma \ref{prop:start.of.chain} follows from the same arguments used in the proof of Lemma \ref{prop:main}.
In fact, it is slightly simpler, as no chaining is required.
We therefore omit the details.

\subsection{Putting it all together: proof of Theorem \ref{thm:main}}
	Let $(X_{i})_{i=1}^{N}\in\Omega_{0}$.
	By Lemma \ref{lem:reduction.to.s1},  for every $f\in\mathcal{F}$, 
\[ \W_{1}(F_{N,f}, F_{f} )
\leq  \W_{1}(F_{N,\pi_{s_{1}} f}, F_{\pi_{s_{1}} f} ) + c_{1}B \frac{\theta}{\sqrt N}.\]
	It follows from Lemma \ref{lem:reduction.to.truncation} that for every $f\in\mathcal{F}$,
\begin{align*}
 \W_{1}(F_{N,\pi_{s_{1}}f}, F_{\pi_{s_{1}}f}) 
\leq \W_{1}(F_{N,\mathcal{T}(f)}, F_{\mathcal{T}(f)} ) + c_{2}B^{2}\left( d_{\mathcal{F}} \sqrt\Delta + \frac{\gamma_{2}(\mathcal{F})}{\sqrt N}\right),
\end{align*} 
	and by \eqref{eq:split.W},
	\[\W_{1}(F_{N,\mathcal{T}(f)}, F_{\mathcal{T}(f)} )\leq \mathcal{E}_{1}(f)+\mathcal{E}_{2}(f).\]
	 Using Lemma \ref{prop:main.summed.up}, with $\mu^{\otimes N}$-probability at least $1-\exp(-4\Delta N)$, for every $f\in\mathcal{F}$, 
	\[ \mathcal{E}_{1}(f)
	 \leq 
	 c_{3} \left(   d_{\mathcal{F}}\frac{  \log^{2}(e/\Delta  )}{\sqrt N} + \frac{\gamma_{2}(\mathcal{F})}{\sqrt N} \right),\] 
	 and in a similar fashion, by Lemma \ref{prop:start.of.chain}, with $\mu^{\otimes N}$-probability at least $1-\exp(-4\Delta N)$, for every $f\in\mathcal{F}$, 
	\[ \mathcal{E}_{2}(f)
	 \leq c_{4} d_{\mathcal{F}} \left( \frac{\log(e/\Delta)}{\sqrt N} + \sqrt\Delta \right).\] 
	The proof follows by recalling that  $\Delta N \geq c_5(B) \max\{\theta^{2} / d_{\mathcal{F}}^{2}, \log^{4}(N)\}$  and that $\theta\geq c_6 \gamma_{2}(\mathcal{F})$.
\qed

We end this section with the proof of the optimality in the gaussian case.

\subsection{Proof of Lemma \ref{lem:optimality}}

By  \eqref{eq:W1.rep.Lip} and choosing the 1-Lipschitz functions $\varphi = \pm {\rm id}$, we have that
\[ \W_1(F_{N,x} ,  F_{x})
 \geq \left | \frac{1}{N}\sum_{i=1}^{N} \langle G_{i},x\rangle - \E \langle G,x\rangle \right|
=\mathcal{G}(x). \]

The first claim in Lemma \ref{lem:optimality} follows  from a standard gaussian lower tail bound:
setting $g$ to be the standard gaussian random variable, for every $x\in S^{d-1}$,  $\mathcal{G}(x)$ has the same distribution as $g/\sqrt N$.

For the second claim,  note that  $\sup_{x\in A} \mathcal{G}(x)$ has the same distribution as
\[ \frac{1}{\sqrt N}  \sup_{x\in A } \langle G, x\rangle.\]
By gaussian concentration (see, e.g., Theorem 4.7 in  \cite{pisier1999volume}), for every $u \geq 0$,  with probability at least $1-2\exp(-\frac{1}{4}u^{2})$,
\[\sup_{x\in A } \langle G, x\rangle \geq \E \sup_{x\in A } \langle G, x\rangle - u,\]
which completes the proof.\qed

\section{Proof of Theorems \ref{thm:main.intro} and \ref{thm:main.structure} and optimality}
\label{sec:proof.structure}

The  proofs of Theorem \ref{thm:main.intro} and Theorem \ref{thm:main.structure} follow immediately from Corollary \ref{cor:W1.subgauss} and Theorem \ref{thm:main}, respectively, and the following observation (applied with $p=2$), recalling that  $q_i(f) = F_f^{-1}(\frac{i}{N+1})$. 

\begin{lemma}
\label{lem:quantile.shift}
	For every function $f$, every $p\geq 1$, and every realization of $(X_i)_{i=1}^N$,
	\[ \left| \W_1(F_{N,f} , F_f) - \frac{1}{N} \sum_{i=1}^N \left| f^\sharp(X_i) - q_i(f) \right| \right|
	\leq 10 \frac{ \|f\|_{L_p} }{ N^{1-1/p} }.\]
\end{lemma}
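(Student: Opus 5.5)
We work with the quantile representation \eqref{eq:W1.rep}. Write $Q=F_f^{-1}$ and recall that $F_{N,f}^{-1}(u)=f^\sharp(X_i)$ for $u\in[\frac{i-1}{N},\frac iN)$. Then
\[
\W_1(F_{N,f},F_f)=\sum_{i=1}^N\int_{(i-1)/N}^{i/N}\big|f^\sharp(X_i)-Q(u)\big|\,du .
\]
We compare this with $\frac1N\sum_i|f^\sharp(X_i)-q_i(f)|=\sum_i\int_{(i-1)/N}^{i/N}|f^\sharp(X_i)-Q(\frac{i}{N+1})|\,du$. By the triangle inequality, the difference in absolute value is at most
\[
D=\sum_{i=1}^N\int_{(i-1)/N}^{i/N}\Big|Q(u)-Q\big(\tfrac{i}{N+1}\big)\Big|\,du .
\]
This quantity is deterministic and does not depend on the sample, so the bound will hold for every realization. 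It remains to show $D\le 10\|f\|_{L_p}N^{-(1-1/p)}$.

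Observe that $\frac{i}{N+1}\in[\frac{i-1}{N},\frac iN]$. Since $Q$ is nondecreasing, for $u$ in the $i$-th cell we have $|Q(u)-Q(\frac{i}{N+1})|\le Q(\frac iN)-Q(\frac{i-1}{N})$. Here the endpoints $Q(0)$ and $Q(1)$ may be infinite, so this crude bound is only used for the interior cells $2\le i\le N-1$. For those cells, telescoping gives
\[
\frac1N\sum_{i=2}^{N-1}\Big(Q(\tfrac iN)-Q(\tfrac{i-1}N)\Big)=\frac{Q(\frac{N-1}{N})-Q(\frac1N)}{N}.
\]
Markov's inequality gives $|Q(u)|\le \|f\|_{L_p}\min(u,1-u)^{-1/p}$. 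Hence the interior contribution is at most $2\|f\|_{L_p}N^{1/p}/N$.

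The main step is the two boundary cells. Consider $i=1$; the case $i=N$ is symmetric. We bound $\int_0^{1/N}|Q(u)|\,du+\frac1N|Q(\frac1{N+1})|$. The second term is at most $\frac1N\|f\|_{L_p}(N+1)^{1/p}\le 2\|f\|_{L_p}N^{1/p-1}$. For the first term, Hölder's inequality gives $\int_0^{1/N}|Q|\le N^{-(1-1/p)}(\int_0^1|Q|^p)^{1/p}=\|f\|_{L_p}N^{1/p-1}$, because $Q(U)$ has the law of $f(X)$ when $U$ is uniform. (Integrating the Markov bound instead would fail at $p=1$; Hölder avoids this.)

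Summing the pieces gives $D\le(2+2\cdot3)\|f\|_{L_p}N^{1/p-1}\le 10\|f\|_{L_p}N^{-(1-1/p)}$, which proves the claim. The only delicate point is handling the possibly unbounded endpoint quantiles, and the Hölder step takes care of it. If $\|f\|_{L_p}=\infty$, the statement is trivial.
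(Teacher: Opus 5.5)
Your proof is correct and follows essentially the same route as the paper: you reduce to the deterministic discretization error, bound the interior cells $2\le i\le N-1$ by monotonicity and telescoping, and control the two boundary cells by H\"older for $\int_0^{1/N}|F_f^{-1}|$ together with a Markov bound on the endpoint quantiles. The only difference is that you use the pointwise estimate $|F_f^{-1}(u)|\le \|f\|_{L_p}\min(u,1-u)^{-1/p}$ where the paper uses the cruder bound $2\|f\|_{L_p}N^{1/p}$, so you get the constant $8$ instead of $10$.
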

\begin{proof}
	Observe that $F_{N,f}^{-1}(u) = f^{\sharp}(X_{i}) $ for $u\in [\frac{i-1}{N}, \frac{i}{N})$ and therefore,  by \eqref{eq:W1.rep}, 
 	\[  \mathcal{W}_{1}(F_{N,f},F_{f})
	=   \int_{0}^{1}  \left| F_{N,f}^{-1}(u) - F_{f}^{-1}(u)\right|\,du
	=\sum_{i=1}^{N}   \int_{\frac{i-1}{N}}^{\frac{i}{N}}  \left| f^\sharp(X_i) - F_{f}^{-1}(u)\right|\,du.\]
	Hence, to complete the proof it suffices to show that
	\[ (\ast) = \sum_{i=1}^{N}   \int_{\frac{i-1}{N}}^{\frac{i}{N}}  \left|  q_i(f)  - F_{f}^{-1}(u)\right|\,du
	\leq 10 \frac{ \|f\|_{L_p} }{  N^{1-1/p}}.\]
	
	To that end, fix $2 \leq i \leq N-1$.
	Since $\frac{i}{N+1}\in [\frac{i-1}{N},\frac{i}{N})$, the monotonicity of $F_{f}^{-1}$  implies that for every $u\in [\frac{i-1}{N},\frac{i}{N})$,
	\[\left|  q_i(f)  - F_{f}^{-1}(u)\right|
	\leq F_{f}^{-1}\left(\frac{i}{N}\right) - F_{f}^{-1}\left(\frac{i-1}{N}\right) , \]
	and therefore, 
	\[  \sum_{i=2}^{N-1}   \int_{\frac{i-1}{N}}^{\frac{i}{N}}  \left|  q_i(f)  - F_{f}^{-1}(u)\right|\,du 
	\leq \frac{1}{N} \left( F_{f}^{-1}\left(1-\frac{1}{N}\right) - F_{f}^{-1}\left(\frac{1}{N}\right) \right).\]
	
	Set $M= \|f\|_{L_p} N^{1/p}$. 
	By  Markov's inequality  $\P( |f(X)| \geq 2M) < \frac{1}{N}$ and hence  $|F_{f}^{-1}(\frac{1}{N})|\leq 2M$;  similarly the terms $|F_{f}^{-1}(1-\frac{1}{N})|$, $|q_1(f)|$, $|q_N(f)|$ are all bounded by $2M$.
	Moreover, by H\"older's inequality, 
	\[ \int_{0}^{\frac{1}{N}}  |F_{f}^{-1}(u)|\,du
	\leq \left( \int_{0}^{1} 1_{[0,\frac{1}{N}]}(u)\,du\right)^{1-1/p} \|f\|_{L_p}
	=\frac{ M }{N},\]
	and the analogue estimate holds for  $\int_{1-\frac{1}{N}}^{1}  |F_{f}^{-1}(u)|\,du$.
	It follows  that  $(\ast) \leq 10\frac{M}{N}$, as claimed.
\end{proof}

Finally, we present the claims made in the introduction on the optimality of Theorem \ref{thm:main.intro}.
Here $q_i = F_g^{-1}(\frac{i}{N+1})$ and $F_g$ is the standard gaussian distribution function.

\begin{lemma}
\label{lem:optimality.matrix}
	There are absolute constants $c_{1},c_2,c_3$ for which the following holds.
	Let $X=G$ be the standard gaussian vector in $\R^d$, let $A\subset S^{d-1}$ be a symmetric set and let $N\geq c_1$.
	Then for every $\Delta \geq \frac{1}{N}$ and $x\in A$, with probability at least $\exp(-c_{2}\Delta N)$,
\begin{align*}
 \frac{1}{N}\sum_{i=1}^N \left|\inr{X_i,x}^\sharp - q_i\right| 
\geq  c_{3}\sqrt\Delta.
\end{align*}
Moreover, with probability at least $0.99$,
\begin{align*}
\sup_{x\in A}  \frac{1}{N}\sum_{i=1}^N \left|\inr{X_i,x}^\sharp - q_i\right| 
	\geq c_3\frac{\E \sup_{x\in A} \inr{G,x}}{\sqrt N}.
\end{align*}
\end{lemma}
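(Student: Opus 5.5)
The plan is to reduce Lemma \ref{lem:optimality.matrix} to Lemma \ref{lem:optimality} via Lemma \ref{lem:quantile.shift}. That reduction leaves an additive error of order $N^{-1/2}$, which is too large when $\Delta\sim 1/N$. To avoid it, I will work directly with the discrete sum and use the lower bound through the mean, as in the proof of Lemma \ref{lem:optimality}.

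For any $v\in\R^N$, the triangle inequality gives
\[
\frac1N\sum_{i=1}^N |v^\sharp_i-q_i| \ge \Bigl|\frac1N\sum_{i=1}^N v^\sharp_i-\frac1N\sum_{i=1}^N q_i\Bigr| .
\]
Since $v^\sharp$ is a permutation of $v$, $\sum_i v^\sharp_i=\sum_i v_i$. Moreover, $q_i=-q_{N+1-i}$ by the symmetry of the standard gaussian, so $\sum_i q_i=0$. Applied to $v=\Gamma x$, this yields
\[
\frac1N\sum_{i=1}^N |\inr{X_i,x}^\sharp-q_i|\ge \Bigl|\frac1N\sum_{i=1}^N\inr{X_i,x}\Bigr| = \mathcal{G}(x).
\]
This holds exactly, with no discretization error, so Lemma \ref{lem:quantile.shift} is not needed at all.

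The first claim now follows from the distribution of $\mathcal{G}(x)$. For $x\in S^{d-1}$, $\mathcal{G}(x)$ is distributed as $|g|/\sqrt N$ with $g$ standard gaussian. The standard lower tail bound $\P(|g|\ge t)\ge c\exp(-t^2)$ for $t\ge 1$, applied with $t=c_3\sqrt{\Delta N}$, gives the required estimate. The case $c_3\sqrt{\Delta N}<1$ is trivial, since then the probability is at least $\P(|g|\ge 1)\ge \exp(-c_2)$, which is at least $\exp(-c_2\Delta N)$ because $\Delta N\ge1$.

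For the second claim, note that
\[
\sup_{x\in A}\mathcal{G}(x)\ge \sup_{x\in A}\frac1N\sum_{i=1}^N\inr{X_i,x} = \frac{1}{\sqrt N}\sup_{x\in A}\inr{G',x},
\]
where $G'=N^{-1/2}\sum_i X_i$ is again a standard gaussian vector. Gaussian concentration for the $1$-Lipschitz map $G'\mapsto\sup_{x\in A}\inr{G',x}$ gives, with probability at least $0.99$,
\[
\sup_{x\in A}\inr{G',x}\ge \E\sup_{x\in A}\inr{G,x}-C .
\]

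The main obstacle is absorbing the constant $C$ when $\E\sup_{x\in A}\inr{G,x}$ is small. Symmetry of $A$ handles this. Fix $x_0\in A$; since $-x_0\in A$ as well,
\[
\sup_{x\in A}\inr{G',x}\ge|\inr{G',x_0}|,
\]
and $|\inr{G',x_0}|\ge c$ with probability at least $0.995$. Also $\E\sup_{x\in A}\inr{G,x}\le \E\|G\|_2\le\sqrt d$ is finite, so the expectation is a fixed number. Consider two cases.
\begin{itemize}
\item If $\E\sup_{x\in A}\inr{G,x}\ge 2C$, the concentration bound gives $\sup_{x\in A}\inr{G',x}\ge \frac12\E\sup_{x\in A}\inr{G,x}$.
\item If $\E\sup_{x\in A}\inr{G,x}<2C$, the bound $\sup_{x\in A}\inr{G',x}\ge c$ is at least $\frac{c}{2C}\E\sup_{x\in A}\inr{G,x}$.
\end{itemize}
In both cases the required lower bound follows after adjusting the probabilities to reach $0.99$. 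The assumption $N\ge c_1$ is used only to keep these constants harmless.
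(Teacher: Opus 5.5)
Your proof is correct, but it takes a different route from the paper's. The paper goes through $\W_1$. It applies Lemma~\ref{lem:quantile.shift} with $p=4$ to get $\frac1N\sum_i|\inr{X_i,x}^\sharp-q_i|\ge \W_1(F_{N,x},F_x)-cN^{-3/4}$. It then invokes Lemma~\ref{lem:optimality}, whose proof bounds $\W_1$ from below by $\mathcal{G}(x)$ via $\varphi=\pm\mathrm{id}$. Finally it absorbs the $N^{-3/4}$ term using $\sqrt\Delta\ge N^{-1/2}$ and $\E\sup_{x\in A}\inr{G,x}\ge\sqrt{2/\pi}$; this is where $N\ge c_1$ and the symmetry of $A$ enter.

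Your opening remark is therefore inaccurate. Lemma~\ref{lem:quantile.shift} holds for every $p\ge1$, and with $p=4$ the discretization error is $O(N^{-3/4})$, not of order $N^{-1/2}$, so the reduction is not blocked.

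Your alternative is the exact discrete counterpart of testing against $\varphi=\pm\mathrm{id}$. Since $\sum_i v^\sharp_i=\sum_i v_i$ and $q_i=-q_{N+1-i}$ gives $\sum_i q_i=0$, you get $\frac1N\sum_i|\inr{X_i,x}^\sharp-q_i|\ge\mathcal{G}(x)$ with no error term at all. This is more elementary and removes the need for $N\ge c_1$ entirely. Your two-case argument for the second claim also makes explicit a step the paper leaves implicit: absorbing the concentration loss when $\E\sup_{x\in A}\inr{G,x}$ is of order one. There the symmetry of $A$ is genuinely used, through $\sup_{x\in A}\inr{G',x}\ge|\inr{G',x_0}|$.

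What the paper's route buys is generality of the transfer step. Passing from $\W_1$ to the discrete sum via Lemma~\ref{lem:quantile.shift} uses no symmetry of the marginal law, whereas your identity $\sum_i q_i=N\,\E\inr{G,x}=0$ does. For a non-symmetric marginal, $\frac1N\sum_i q_i(f)$ is only approximately $\E f$, and a discretization error of exactly the kind Lemma~\ref{lem:quantile.shift} controls would reappear.
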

\begin{proof}
	An application of  Lemma \ref{lem:quantile.shift} (with $p=4$) shows that  for every $x\in A$,
	\begin{align}
	\label{eq:matrix.lower.proof}
	 \frac{1}{N} \sum_{i=1}^N \left|\inr{X_i,x}^\sharp - q_i\right| 
	\geq  \W_1(F_{N,x} , F_x) - \frac{c_1}{N^{3/4}}.
	\end{align}
	Both claims follow from Lemma~\ref{lem:optimality}.
	Indeed, note that $\sqrt{\Delta} \geq \frac{1}{\sqrt N}$, and by symmetry of $A$ we have
$\E \sup_{x\in A} \langle G, x\rangle \geq \E |\inr{G,x_0}|=\sqrt{2/\pi}$ for any $x_0\in A$.
	Consequently, the error term $c_1 N^{-3/4}$ in \eqref{eq:matrix.lower.proof} can be absorbed into $\sqrt{\Delta}$ and $\frac{1}{\sqrt{N}} \E \sup_{x\in A} \langle G, x\rangle$, respectively, provided that $N \geq c_2$.
\end{proof}

\appendix

\section{An application of Theorem \ref{thm:main}}
\label{sec:app.application}

Theorem \ref{thm:main}, combined with the Lipschitz-representation of $\W_1$ (see \eqref{eq:W1.rep.Lip}) has an immediate outcome.
To formulate it, let $\mathcal{L}$ denote the set of all $1$-Lipschitz functions from $\R$ to $\R$.

\begin{theorem}
\label{thm:lipschitz}
    In the event where \eqref{eq:main.thm.outcome} holds, we have that
    \[ \sup_{\varphi \in \mathcal{L}}\,\sup_{f\in\mathcal{F}} 
    \left( \frac{1}{N}\sum_{i=1}^N \varphi(f(X_i)) - \E \varphi(f(X)) \right) \leq d_\mathcal{F}\sqrt\Delta.\]
\end{theorem}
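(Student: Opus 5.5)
The argument is a direct application of the dual representation \eqref{eq:W1.rep.Lip}, with no further probabilistic input. Throughout we work on the event where \eqref{eq:main.thm.outcome} holds; fix $f\in\mathcal{F}$ and $\varphi\in\mathcal{L}$.

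First, identify the two averages as integrals against distribution functions. The empirical distribution function $F_{N,f}$ is that of the measure $\frac1N\sum_{i=1}^N\delta_{f(X_i)}$, so
\[
\frac1N\sum_{i=1}^N\varphi(f(X_i))=\int_\R\varphi\,dF_{N,f}.
\]
Likewise, $\E\varphi(f(X))=\int_\R\varphi\,dF_f$ by the change-of-variables formula.

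Next, check that \eqref{eq:W1.rep.Lip} applies. Since $\mathcal{F}\subset L_2(\mu)$, the law $F_f$ has a finite first moment. The empirical measure $F_{N,f}$ is finitely supported, so it has one too. Moreover, $|\varphi(t)|\le|\varphi(0)|+|t|$, so both integrals are finite and the difference is well defined.

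Then the Kantorovich--Rubinstein duality in \eqref{eq:W1.rep.Lip} gives
\[
\frac1N\sum_{i=1}^N\varphi(f(X_i))-\E\varphi(f(X))\le \W_1(F_{N,f},F_f)\le \sup_{h\in\mathcal{F}}\W_1(F_{N,h},F_h)\le d_\mathcal{F}\sqrt\Delta,
\]
where the last inequality is \eqref{eq:main.thm.outcome}. The right-hand side depends on neither $f$ nor $\varphi$, so taking the supremum over $\varphi\in\mathcal{L}$ and $f\in\mathcal{F}$ yields the claim.

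The only step needing any care is the integrability check. Here one should note that the event from Theorem \ref{thm:main} does not depend on $\varphi$: the bound holds simultaneously for all Lipschitz test functions on that single event, which is exactly why the double supremum costs nothing.

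Since $\mathcal{L}$ is closed under $\varphi\mapsto-\varphi$, the same argument also bounds the absolute value of the difference.
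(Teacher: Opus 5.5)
Your proposal is correct and matches the paper's argument, which presents the statement as an immediate consequence of the dual representation \eqref{eq:W1.rep.Lip} applied on the event \eqref{eq:main.thm.outcome}. You also make explicit the finite-first-moment check and the fact that a single event serves every $\varphi$ and $f$; the paper leaves both implicit.
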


Two consequences of Theorem~\ref{thm:lipschitz} are worth mentioning.

First, assume that $(f(X))_{f\in\mathcal{F}}$ is a gaussian process and that
$\E\sup_{f\in\mathcal{F}} G_f \geq \log^2 N$.
Then Theorem~\ref{thm:lipschitz} yields the following uniform version of the contraction principle (see, e.g., \cite{ledoux1991probability} for the classical result):
\[
\E \sup_{\varphi \in \mathcal{L}}\,\sup_{f\in\mathcal{F}} 
    \left( \frac{1}{N}\sum_{i=1}^N \varphi(f(X_i)) - \E \varphi(f(X)) \right)
    \sim
 \E \sup_{f\in\mathcal{F}}
\left| \frac{1}{N}\sum_{i=1}^N f(X_i) - \E f(X) \right|.
\]

A second application of Theorem~\ref{thm:lipschitz} addresses a central challenge in empirical process theory:  establishing gaussian concentration in non-gaussian settings and under minimal structural assumptions on the indexing class.
To that end, suppose that  $\mathcal{F}$ satisfies the one-sided coarse estimate of Assumption~\ref{ass:main} with
$\theta \sim \E \sup_{f\in\mathcal{F}} G_f$ and $\theta \geq \log^2 (N)$.
Applying Theorem~\ref{thm:lipschitz} with $\varphi=\pm \mathrm{id}$ yields that, with high probability,
\[
\sup_{f\in\mathcal{F}}
\left| \frac{1}{N}\sum_{i=1}^N f(X_i) - \E f(X) \right|
\leq
d_{\mathcal{F}}\,\frac{\E\sup_{f\in\mathcal{F}} G_f}{\sqrt{N}}.
\]
Thus, a `weak' one-sided structural condition leads to a gaussian estimate on the empirical process.

\section{On Assumption \ref{ass:main}}
\label{sec:app.proof.ass}

%

\subsection{Proof of  Lemma \ref{prop:assumption.gaussian}}

	Recall that $d_{\mathcal{F}}=\sup_{f\in\mathcal{F}}\|f\|_{L_2}$ and $d^\ast(\mathcal{F})\sim \gamma_2^2(\mathcal{F}) / d_\mathcal{F}^2$, and set
	\[\mathcal{H} = {\rm conv} \left( \{f- \tilde f : f,\tilde f \in\mathcal{F}\}\right) .\]
	Clearly  $\mathcal{F}\subset \mathcal{H}$,  $0\in\mathcal{H}$, and $d_\mathcal{H}\leq 2 d_{\mathcal{F}}$.
	Moreover,  a standard consequence of Talagrand's  majorizing measures theorem is that $\gamma_{2}(\mathcal{H})\sim\gamma_{2}(\mathcal{F})$; thus $d^\ast(\mathcal{H})\sim d^\ast(\mathcal{F})$.
	
	Let $r\geq   \frac{ \gamma_2(\mathcal{F}) }{ \sqrt N}$, and put 
	\[\mathcal{H}_r = \{ h\in \mathcal{H} : \|h\|_{L_{2}}\leq r\}.\]	
	It suffices to show that, with exponentially high probability, $\sup_{h\in\mathcal{H}_r} \|h\|_{L_2(\P_N)} \leq c_1L r$.
	If that is true, the lemma follows because $\mathcal{H}$ is convex and contains 0 and therefore is star-shaped around 0: for $h\in\mathcal{H}$ and $u\in[0,1]$, $uh\in\mathcal{H}$.
	
	To that end, an application of Theorem 1.13 in \cite{mendelson2016upper} and the preceding discussion therein shows that, for every $\lambda\geq 1$, with probability at least $1-2\exp(- c_2\lambda^2 d^{\ast}( \mathcal{H}_r))$,
	\[ \sup_{h\in \mathcal{H}_r} \left| \frac{1}{N}\sum_{i=1}^{N} h^{2}(X_{i}) - \|h\|_{L_{2}}^{2} \right|
	\leq c_{3} L^{2}  \left(
	 \lambda d_{\mathcal{H}_r} \frac{ \gamma_{2}(\mathcal{H}_r) }{\sqrt N} +  \lambda^2 \frac{ \gamma_{2}^{2}(\mathcal{H}_r) }{ N}
	\right)
	=\mathcal{E}_\lambda.\]
	Clearly $d_{\mathcal{H}_r}\leq  r $ and  $\gamma_{2}(\mathcal{H}_r) \leq  \gamma_{2}(\mathcal{H})\sim\gamma_2(\mathcal{F})$.
	Setting $\lambda = c_4\frac{r \sqrt N }{\gamma_2(\mathcal{H}_r)}$ we have that $\lambda\geq 1$.
	Moreover, $\mathcal{E}_\lambda\leq c_{5}L^{2 }r^{2}$ and using that $d_{\mathcal{H}_r} \sim \min\{r, d_{\mathcal{F}} \}$,
	\[ \lambda^2 d^{\ast}( \mathcal{H}_r) 
	=   c_4^2  \frac{r^2 N }{ d_{\mathcal{H}_r}^2} 
	\sim \max\left\{ N, \frac{r^2 N }{ d_{\mathcal{F}}^2} \right\}  
 \] 
	from which the claim follows.
\qed

We proceed to show that Assumption \ref{ass:main} is satisfied for other, heavy-tailed classes $\mathcal{F}$ as well, starting with:

\subsection{Classes of linear functionals indexed by $S^{d-1}$}

Let $\mathcal{F}=\mathcal{F}_{S^{d-1}} =  \{ \langle \,\cdot \,, x\rangle : x\in S^{d-1}\}$ be  the class of linear functionals indexed by the entire sphere, and let  $X\in\R^{d}$ be a zero mean random vector that satisfies $L_{q}-L_{2}$ norm-equivalence with constant $L$: for every $x\in \R^{d}$,
\[  \| \langle X, x\rangle \|_{L_{q}}
\leq L  \| \langle X, x\rangle \|_{L_{2}} . \]
If $X$ is isotropic and  $\|X\|_{2}$ is a well-behaved random variable, under such a norm-equivalence the random matrix $\frac{1}{\sqrt N} \sum_{i=1}^{N}\inr{X_{i},\cdot}e _{i}$ satisfies the quantitative Bai-Yin asymptotics \cite{bai1993limit}; namely, that
\begin{align*}
\zeta_{d,N}=\sup_{x\in S^{d-1}} \left| \frac{1}{N}\sum_{i=1}^{N} \langle X_{i},x\rangle^{2} - \E \langle X,x\rangle^{2} \right|
&\lesssim \sqrt\frac{d}{N}.
\end{align*}

The  following result is an immediate outcome of  \cite[Corollary 2]{tikhomirov2018sample} and  is the current state of the art on such Bai-Yin type estimates.

\begin{theorem}
\label{thm:Bai.Yin}
For every $L,R\geq 1$, $q>4$ there is a constant $c=c(q,L)$ such that the following holds.
If $X$ is isotropic and satisfies $L_{q}-L_{2}$ norm-equivalence with constant $L$,  $N\geq 2d$, and  $\max_{i\leq N} \|X_{i}\|_{2} \leq R (dN)^{1/4}$ with probability $1-\delta$, then, with probability at least $1-\frac{1}{d}-\delta$, 
\[\zeta_{d,N}\leq cR^2  \sqrt\frac{d}{N}.\]
\end{theorem}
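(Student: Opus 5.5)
The plan is to obtain the estimate as a direct specialization of \cite[Corollary 2]{tikhomirov2018sample}, so the work is mainly in matching hypotheses. I would first restate that result in the form I need. For an isotropic vector $Y$ in $\R^d$ with $\sup_{x\in S^{d-1}}\E|\inr{Y,x}|^q\le K$ for some $q>4$, with $N\ge 2d$, and with $\max_{i\le N}\|Y_i\|_2\le R(dN)^{1/4}$ holding deterministically, the sample covariance satisfies
\[
\Bigl\|\frac1N\sum_{i=1}^N Y_i\otimes Y_i-{\rm Id}\Bigr\|\le c(q,K)R^2\sqrt{d/N}
\]
with probability at least $1-\frac1d$. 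For $x\in S^{d-1}$ the operator norm of the centred covariance is exactly $\zeta_{d,N}$, because $\E\inr{X,x}^2=1$ by isotropy. The moment hypothesis holds for $X$ with $K=L^q$: the $L_q$--$L_2$ norm equivalence together with isotropy gives $\|\inr{X,x}\|_{L_q}\le L$.

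The only mismatch is that here the norm bound $\max_i\|X_i\|_2\le R(dN)^{1/4}$ holds with probability $1-\delta$ rather than almost surely. I would handle this as follows.
\begin{enumerate}[(i)]
\item If the cited corollary is already formulated on the event where the norm bound holds, as it is in the relevant form of Tikhomirov's statement, where boundedness is required only of the realized sample, then I take the intersection of that event with the event of the corollary. A union bound then gives probability at least $1-\frac1d-\delta$.
\item Otherwise I truncate, setting $Y=X1_{\{\|X\|_2\le R(dN)^{1/4}\}}$. On the event of probability $1-\delta$ we have $Y_i=X_i$ for all $i\le N$. I would apply the result to $Y$, after renormalizing by $\Sigma_Y^{-1/2}$, where $\Sigma_Y=\E\, Y\otimes Y$, to restore isotropy. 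Truncation only decreases moments, so the $L_q$ condition transfers to $\Sigma_Y^{-1/2}Y$ with the constant $L$ changed by a factor depending only on $q$ and $L$, provided $\Sigma_Y\succeq\frac12{\rm Id}$.
\end{enumerate}

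The main obstacle is route (ii), namely controlling the covariance defect $\|\Sigma_Y-{\rm Id}\|$. By H\"older,
\[
\sup_{x\in S^{d-1}}\E\inr{X,x}^21_{\{\|X\|_2>t\}}\le L^2\,\P(\|X\|_2>t)^{1-2/q},
\]
and Markov with $\E\|X\|_2^2=d$ gives only $\P(\|X\|_2>t)\le d/t^2$, which need not be of order $\sqrt{d/N}$. So I would not rely on route (ii) for the full rate. I would check that the cited corollary indeed only uses the bound on the realized sample, and argue via route (i), adjusting constants so that $c=c(q,L)$.
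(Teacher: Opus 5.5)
Your route (i) is essentially the paper's argument: the paper gives no separate proof and cites the result as an immediate consequence of \cite[Corollary 2]{tikhomirov2018sample}. As far as I recall that result, its bound contains the realized term $\frac1N\max_{i\le N}\|X_i\|_2^2$, which is at most $R^2\sqrt{d/N}$ on the event of probability $1-\delta$, while for $q>4$ the remaining terms are at most $c(q,L)\sqrt{d/N}$; so intersecting with that event and taking a union bound finishes the argument. Route (ii) is unnecessary, and you are right not to rely on it.
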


Note that since $\E\|X\|_{2}^{2} =  d$ and $N\geq d$, the condition that $\max_{i\leq N }  \|X_i\|_2\leq R(dN)^{1/4}$ is naturally satisfied, with constants $R$ and $\delta$ depending on the tail decay exhibited by $\|X\|_2$.
In fact, there are versions of Theorem \ref{thm:Bai.Yin} that hold with higher probability, but we will not pursue that aspect here.

A direct consequence of Theorem \ref{thm:Bai.Yin} is that:

\begin{corollary}
\label{cor:Bai.Yin}
	Let $\Omega_{0}$ be the event in which the assertion of Theorem \ref{thm:Bai.Yin} holds  and assume that  $  N\geq  16R^4c^2 d $.
	Then Assumption \ref{ass:main} is satisfied with $B=2$ and $\theta=0$.
\end{corollary}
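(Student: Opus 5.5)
On the event $\Omega_0$ where Theorem \ref{thm:Bai.Yin} holds we have $\zeta_{d,N}\le cR^2\sqrt{d/N}$. The plan is to show that on this event the empirical $L_2$ norm is at most twice the true $L_2$ norm, uniformly over all differences of functions in the class. Then Assumption \ref{ass:main} holds with $B=2$ and with the additive term $\theta/\sqrt N$ equal to zero.

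\textbf{Step 1: reduction to the sphere.} Every $f,h\in\mathcal{F}\cup\{0\}$ has the form $f-h=\inr{\cdot,z}$ with $z=x-y$, where $x,y\in S^{d-1}\cup\{0\}$. This difference is again a linear functional. Since $X$ is isotropic, $\|\inr{X,z}\|_{L_2}=\|z\|_2$. If $z=0$ there is nothing to prove. Otherwise, by homogeneity it suffices to prove that for every $u\in S^{d-1}$,
\[
\frac1N\sum_{i=1}^N\inr{X_i,u}^2\le 4=4\,\E\inr{X,u}^2 .
\]

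\textbf{Step 2: apply the Bai--Yin bound.} On $\Omega_0$,
\[
\frac1N\sum_{i=1}^N\inr{X_i,u}^2\le 1+\zeta_{d,N}\le 1+cR^2\sqrt{d/N}.
\]
The assumption $N\ge 16R^4c^2d$ gives $cR^2\sqrt{d/N}\le 1/4$. Hence the left-hand side is at most $5/4\le 4$. Taking square roots and rescaling by $\|z\|_2$ yields
\[
\|f-h\|_{L_2(\P_N)}\le 2\|f-h\|_{L_2}.
\]
This is exactly Assumption \ref{ass:main} with $B=2$ and the additive term absent. The probability of $\Omega_0$ is the one supplied by Theorem \ref{thm:Bai.Yin}.

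\textbf{Main obstacle: the condition $\theta>\E\sup_{f\in\mathcal F}G_f$.} Assumption \ref{ass:main} as literally stated requires this strict inequality, which fails for $\theta=0$. I would resolve it by observing that the displayed inequality then holds for every $\theta\ge0$, since adding a nonnegative term only weakens it. So one may take any $\theta>\E\sup G_f\sim\sqrt d$ without changing $\Omega_0$, and read "$\theta=0$" as saying that no additive term is needed. Apart from this convention, the argument is immediate.
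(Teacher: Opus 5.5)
Your argument is correct and is exactly the direct consequence the paper has in mind. On $\Omega_0$ the condition $N\ge 16R^4c^2d$ forces $\zeta_{d,N}\le 1/4$, so isotropy and homogeneity give $\|f-h\|_{L_2(\P_N)}\le \sqrt{5/4}\,\|f-h\|_{L_2}\le 2\|f-h\|_{L_2}$ for all differences of linear functionals. Your remark on the requirement $\theta>\E\sup_{f\in\mathcal F}G_f$ is also the right reading of the paper's ``$\theta=0$'': no additive term is needed, so any admissible $\theta$ can be used with the same event $\Omega_0$.
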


A more interesting question is whether a dimension-free analogue to Corollary \ref{cor:Bai.Yin} is true for non-isotropic $X$, with a condition on $N$ in terms of the covariance matrix of $X$ rather than the assumption that $N\gtrsim d$.
While this does not follow immediately from the dimension-free Bai-Yin theorem recently established in \cite{abdalla2022covariance} (see also \cite{jirak2025concentration}), such an estimate happens to be true.
To  formulate the result, set $\Sigma=\C[X]$, let ${\rm tr}(\Sigma)$ be  its trace, denote by $\|\Sigma\|_{\rm op}$ its operator norm, and put
\[ {\rm r}(\Sigma) = \frac{ {\rm tr}(\Sigma)}{\|\Sigma\|_{\rm op}},\]
i.e., the effective rank of $\Sigma$.
It is standard to verify that  ${\rm tr}(\Sigma) \sim \gamma_2^2(S^{d-1})$ and  that  ${\rm r}(\Sigma)\sim d^\ast(S^{d-1})$.
Moreover, in the context of establishing Assumption \ref{ass:main},  one may assume without loss of generality that $\Sigma$ is diagonal with decreasing, strictly positive eigenvalues $\lambda_i =\lambda_i(\Sigma)$ for $i=1,\dots,d$.
We denote by $P_{1:N}\colon\R^{d}\to\R^{d}$ the $\ell_2$-projection onto ${\rm span}(e_1,\dots,e_{\min\{d,N\}})$, and since $\Sigma$ is diagonal, $P_{1:N}$ is also the $L_2$-projection.

\begin{lemma}
\label{lem:Bai.Yin.non.isotropic}
	For every  $L,R\geq 1$ and $q>4$ there are constants $c_1,c_2,c_3$ depending on $q$ and $L$ such that the following holds.
	Assume that $X$ satisfies $L_q-L_2$ norm-equivalence with constant $L$ and that 
	with probability $1-\delta$,
	\begin{itemize}
	\item[(i)]  $\max_{i\leq N} \| P_{1:N} \Sigma^{-1/2}X_i \|_2 \leq  R \sqrt N$,
	\item[(ii)] $\max_{i\leq N}\|X_i\|_2\leq R \sqrt{{\rm tr}(\Sigma)}$.
	\end{itemize}
	If $N\geq c_1 ( {\rm r}(\Sigma) + R^4)$,
	then with  probability at least $1-\frac{c_2}{N}-2\delta$, for every $x\in \R^d$ satisfying $\|x\|_2\leq 1$,
	\begin{align}
	\label{eq:Bai.Yin.free}
	\|\inr{X,x}\|_{L_2(\P_N)}
	\leq c_3R \left( \|\inr{X,x}\|_{L_2} + \sqrt \frac{{\rm tr}(\Sigma)}{N} \right) .
	\end{align}
\end{lemma}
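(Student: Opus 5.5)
Write $n=\min\{d,N\}$ and split every $x$ with $\|x\|_2\le 1$ as $x=P_{1:N}x+(I-P_{1:N})x$, treating the two pieces separately. For the head, $\|\inr{X,P_{1:N}x}\|_{L_2}\le\|\inr{X,x}\|_{L_2}$ because $\Sigma$ is diagonal. For the tail, we aim for $\|\inr{X,(I-P_{1:N})x}\|_{L_2(\P_N)}\lesssim R\sqrt{{\rm tr}(\Sigma)/N}$. The triangle inequality in $L_2(\P_N)$ then gives \eqref{eq:Bai.Yin.free}.

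\emph{Head.} Put $Y=P_{1:N}\Sigma^{-1/2}X$, an isotropic vector in the $n$-dimensional space ${\rm span}(e_1,\dots,e_n)$. It inherits $L_q-L_2$ norm-equivalence with constant $L$, since its marginals are marginals of $X$. By (i), $\max_i\|Y_i\|_2\le R\sqrt N$.
\begin{itemize}
\item If $N\ge 2n$ (i.e.\ $d\le N/2$), apply Theorem \ref{thm:Bai.Yin} in dimension $n$. The needed bound $R\sqrt N\le R'(nN)^{1/4}$ holds with $R'=R(N/n)^{1/4}$, and this gives $\zeta\lesssim R'^2\sqrt{n/N}=R^2$. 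So $\|\inr{Y,z}\|_{L_2(\P_N)}\lesssim R\|z\|_2$ with probability $1-1/n-\delta$.
\item To get the failure probability $c_2/N$ instead of $1/n$, and to handle $n\sim N$, I would first pad $Y$ with independent gaussian coordinates up to dimension $N/2$. This keeps norm-equivalence, isotropy, and the bound $\|Y_i\|_2\lesssim R\sqrt N$ (with high probability). The same computation then yields the operator-norm bound $\|\inr{Y,z}\|_{L_2(\P_N)}\le cR\|z\|_2$ with probability $1-c/N-\delta$.
\end{itemize}
Applying this with $z=\Sigma^{1/2}P_{1:N}x$ gives $\|\inr{X,P_{1:N}x}\|_{L_2(\P_N)}\le cR\|\inr{X,P_{1:N}x}\|_{L_2}$.

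\emph{Tail.} Here we only need a crude bound: the largest singular value of the $N\times(d-n)$ matrix with rows $(I-P_{1:N})X_i$, divided by $\sqrt N$. Write $T=\sum_i (I-P_{1:N})X_i\otimes(I-P_{1:N})X_i$, so that
\[
\frac1N\|T\|_{\rm op}\le \frac1N\Big\|\sum_i \E\,(I-P_{1:N})X\otimes (I-P_{1:N})X\Big\|_{\rm op}+\frac1N\|T-\E T\|_{\rm op}.
\]
\begin{itemize}
\item The first term equals $\lambda_{N+1}\le {\rm tr}(\Sigma)/N$, since the eigenvalues are decreasing.
\item For the fluctuation, I would use a symmetrization/Rudelson-type or matrix Bernstein bound for sums of rank-one matrices with bounded norms. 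By (ii), $\|(I-P_{1:N})X_i\|_2^2\le R^2{\rm tr}(\Sigma)$. This gives $\E\|T\|_{\rm op}\lesssim N\lambda_{N+1}+R^2{\rm tr}(\Sigma)\log(\cdot)$, with a logarithmic loss that is not acceptable.
\item To avoid the log, I would instead exploit $L_q$, $q>4$, norm-equivalence via the Tikhomirov-type argument behind Theorem \ref{thm:Bai.Yin}. Alternatively, I would split the tail into dyadic blocks of eigenvalues and apply the head argument to each block, after rescaling to isotropic position. The block sums are geometric, because $\lambda_k\le {\rm tr}(\Sigma)/k$.
\end{itemize}
Once the fluctuation is controlled, the tail bound gives $\|\inr{X,(I-P_{1:N})x}\|_{L_2(\P_N)}^2\lesssim R^2{\rm tr}(\Sigma)/N$.

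\emph{Main obstacle.} The hard part is the tail estimate without logarithmic loss and with failure probability $O(1/N)$. I expect the dyadic block decomposition to need a union bound over roughly $\log(d/N)$ blocks, which costs probability. To avoid it, I would apply Theorem \ref{thm:Bai.Yin} to each block with its own dimension and use $\sum_j 1/d_j\lesssim 1/N$, since $d_j\ge N2^j$. The assumption $N\ge c_1({\rm r}(\Sigma)+R^4)$ enters by guaranteeing $N\lambda_1\lesssim {\rm tr}(\Sigma)\cdot N/{\rm r}(\Sigma)$ consistency and the sample-size condition of Theorem \ref{thm:Bai.Yin} in each application.
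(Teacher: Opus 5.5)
There is a genuine gap, and it sits in the tail estimate, which is the actual content of the lemma. Your head step is essentially the paper's Step 1, up to one fixable slip. With the split at $n=\min\{d,N\}$, whenever $d>N/2$ the head has dimension $n>N/2$, so Theorem~\ref{thm:Bai.Yin} (which needs $N\ge 2n$) does not apply, and padding only increases the dimension. The paper avoids this by splitting at $K=\eta N\le N/2$; (i) still covers that head because $\|P_L\Sigma^{-1/2}X_i\|_2\le\|P_{1:N}\Sigma^{-1/2}X_i\|_2$.

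For the tail you offer a list of options, and none of them closes the step:
\begin{itemize}
\item Matrix Bernstein loses a logarithm, as you note yourself.
\item Theorem~\ref{thm:Bai.Yin} is an $N\ge 2d$ result and says nothing about a tail of dimension $d-N\gg N$.
\item The dyadic-block route fails for three concrete reasons. First, every tail block has dimension $d_j\ge N$, so Theorem~\ref{thm:Bai.Yin} cannot be applied to it at all. In that regime the isotropic sample covariance has norm of order $d_j/N$, so a two-sided Bai--Yin deviation bound is the wrong tool. Second, rescaling a block to isotropic position destroys the norm control you need. Assumption (i) only bounds $P_{1:N}\Sigma^{-1/2}X_i$, and (ii) bounds $\|X_i\|_2$ in the original coordinates, which gives nothing for $\lambda_k^{-1/2}$-rescaled coordinates when the eigenvalues inside a block spread. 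Third, even without rescaling, any per-block operator bound contains $\max_i\|P_{B_j}X_i\|_2^2$. The sum $\sum_j\max_i\|P_{B_j}X_i\|_2^2$ is not controlled by (ii), which only bounds $\max_i\sum_j$.
\end{itemize}
Separately, the block sums are not geometric. Using only $\lambda_k\le{\rm tr}(\Sigma)/k$, each block contributes about ${\rm tr}(\Sigma)/N$, which costs a factor $\log(d/N)$. Summability would have to come from $\sum_j\lambda_{N2^j}N2^j\lesssim{\rm tr}(\Sigma)$ combined with Cauchy--Schwarz.

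The missing ingredient is a dimension-free, log-free bound for the whole tail at once under $L_q$--$L_2$ equivalence. The paper proceeds as follows:
\begin{itemize}
\item It rewrites $\sup_{x\in M,\|x\|_2\le 1}\sum_i\inr{Z_i,x}^2=\sup_{a\in S^{N-1}}\|\sum_i a_iZ_i\|_2^2$.
\item It reduces to $K$-sparse $a$ at the cost of a factor depending only on $\eta$.
\item It applies the Abdalla--Zhivotovskiy estimate (Theorem~\ref{thm:AZ}) with $s=2K$.
\end{itemize}
This is why the split is at $K=\eta N$ with $\eta=1/(2\beta)$ rather than at $N$. That choice gives $s\le N/\beta$ and makes $(N/s)^{4/(4+q)}\log^4(N/s)$ a constant. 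Then $\|\Sigma^Z\|_{\rm op}=\lambda_{K+1}\le{\rm tr}(\Sigma)/K$ yields $\|\Sigma^Z\|_{\rm op}\, s\lesssim{\rm tr}(\Sigma)$, and (ii) gives $\max_i\|Z_i\|_2^2\le R^2{\rm tr}(\Sigma)$. The hypothesis ${\rm r}(\Sigma^Z)\le s$ holds when $\lambda_{K+1}\ge{\rm tr}(\Sigma)/(2K)$. Otherwise it is enforced by padding $X$ with Bernoulli coordinates scaled by $\sqrt{{\rm tr}(\Sigma)/K}$. Without an input of this kind, your tail bound $\|\inr{X,(I-P_{1:N})x}\|_{L_2(\P_N)}\lesssim R\sqrt{{\rm tr}(\Sigma)/N}$, and hence the lemma, is not established.
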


To put  conditions (i) and (ii) into some perspective, note that
$\E\|X\|_{2}^{2} = {\rm tr}(\Sigma)$ and 
$\E \| P_{1:N} \Sigma^{-1/2}X \|_2^{2} =  N$; hence (i) and (ii) are  naturally satisfied with constants $\delta$ and $R$, depending on the tail behaviour of  $X$ and $\Sigma^{-1/2} X$.
That  dependence is the reason why we kept $R$ explicit in Theorem \ref{lem:Bai.Yin.non.isotropic}.

\begin{corollary}
\label{cor:Bai.Yin.free}
	Let $\Omega_{0}$ be the event in which \eqref{eq:Bai.Yin.free} holds; then Assumption \ref{ass:main} is satisfied with $B=c_3 R $ and $\theta=c_3 R \sqrt{{\rm tr}(\Sigma)}$.
	In particular, if 
	\[ \Delta N \geq c_1 \max\left\{ {\rm r}(\Sigma) , \log^4(N)\right\}, \]
	then with probability at least $1-\frac{c_2}{N}-2\delta - \exp(-c_3\Delta N)$,
	\[  \sup_{x\in S^{d-1}} \W_1(F_{N,x},F_x)
	\leq \sqrt{\lambda_1 \Delta}.\]
\end{corollary}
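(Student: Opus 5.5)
The plan is to verify Assumption \ref{ass:main} for the class $\mathcal{F}=\{\inr{\cdot,x}:x\in S^{d-1}\}$ directly from \eqref{eq:Bai.Yin.free}, and then apply Theorem \ref{thm:main} with the resulting $B$ and $\theta$.

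\textbf{Step 1: Assumption \ref{ass:main}.} Take $f=\inr{\cdot,x}$ and $h=\inr{\cdot,y}$ in $\mathcal{F}\cup\{0\}$. Then $f-h=\inr{\cdot,x-y}$ with $\|x-y\|_2\le 2$. Write $x-y=2z$ with $\|z\|_2\le1$. By linearity of both norms in the scaling, \eqref{eq:Bai.Yin.free} gives, on $\Omega_0$,
\[
\|f-h\|_{L_2(\P_N)}\le c_3R\|f-h\|_{L_2}+2c_3R\sqrt{{\rm tr}(\Sigma)/N}.
\]
This is the required inequality with $B=c_3R$ and $\theta=2c_3R\sqrt{{\rm tr}(\Sigma)}$; the factor $2$ is absorbed by renaming $c_3$. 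We also need $\theta>\E\sup_{x\in S^{d-1}}G_x$. Here the canonical gaussian process is $\inr{\Sigma^{1/2}g,x}$, so $\E\sup_x G_x=\E\|\Sigma^{1/2}g\|_2\le\sqrt{{\rm tr}(\Sigma)}$. The condition therefore holds once $c_3R\ge 1$, which we may assume by enlarging $c_3$. Finally, $\P(\Omega_0^c)\le c_2/N+2\delta$ by Lemma \ref{lem:Bai.Yin.non.isotropic}, which requires $N\ge c_1({\rm r}(\Sigma)+R^4)$.

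\textbf{Step 2: applying Theorem \ref{thm:main}.} We have $d_{\mathcal{F}}=\sup_{x\in S^{d-1}}\|\inr{X,x}\|_{L_2}=\sqrt{\lambda_1}$. This gives
\[
\theta^2/d_{\mathcal{F}}^2=c_3^2R^2\,{\rm tr}(\Sigma)/\lambda_1=c_3^2R^2\,{\rm r}(\Sigma).
\]
So the condition of Theorem \ref{thm:main} becomes $\Delta N\ge c_1(B)\max\{c_3^2R^2{\rm r}(\Sigma),\log^4N\}$. Since $R$, $q$ and $L$ are treated as fixed, this matches the stated hypothesis after adjusting $c_1$. Theorem \ref{thm:main} then gives, with probability at least $1-\exp(-c_2(B)\Delta N)-\P(\Omega_0^c)$,
\[
\sup_x\W_1(F_{N,x},F_x)\le\sqrt{\lambda_1\Delta}.
\]

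\textbf{Step 3: the sample-size requirement.} Lemma \ref{lem:Bai.Yin.non.isotropic} needs $N\gtrsim{\rm r}(\Sigma)+R^4$. This follows from $\Delta\le1$, which we may assume by the earlier remark, together with $\Delta N\gtrsim{\rm r}(\Sigma)$: these give $N\ge\Delta N\gtrsim{\rm r}(\Sigma)$. The $R^4$ term is absorbed into the constant $c_1$, which is allowed to depend on $R$ as well.

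There is no substantial obstacle. The only points needing care are the bookkeeping of constants depending on $R$, and the check that $\theta$ dominates the gaussian mean width, which uses $\E\|\Sigma^{1/2}g\|_2\le\sqrt{{\rm tr}\Sigma}$.
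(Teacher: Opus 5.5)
Your Steps 1 and 2 are the intended argument. The paper writes out no separate proof: the corollary is Lemma \ref{lem:Bai.Yin.non.isotropic}, rescaled to differences $x-y$ with $\|x-y\|_2\le 2$, fed into Theorem \ref{thm:main} with $d_{\mathcal F}=\sqrt{\lambda_1}$ and $\theta^2/d_{\mathcal F}^2\sim R^2\,{\rm r}(\Sigma)$. Your check that $\theta$ dominates $\E\|\Sigma^{1/2}g\|_2\le\sqrt{{\rm tr}(\Sigma)}$ is also correct.

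Step 3, however, is unnecessary, and as written it is not valid. The corollary lives in the setting of Lemma \ref{lem:Bai.Yin.non.isotropic}; the term $\frac{c_2}{N}+2\delta$ in the probability estimate can only come from there. So $N\ge c_1({\rm r}(\Sigma)+R^4)$ and conditions (i)--(ii) are standing hypotheses, not consequences of the restriction on $\Delta$.

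Your attempt to derive them relies on the remark in Section \ref{sec:proof.main} that one may take $\Delta\le 1$. That remark only works on $\Omega_0$, since it uses $\|f\|_{L_2(\P_N)}\le Bd_{\mathcal F}+\theta/\sqrt N$. It also only gives $c(B)\,d_{\mathcal F}\sqrt\Delta$, not $d_{\mathcal F}\sqrt\Delta$. When $\Delta>1$, the hypothesis $\Delta N\ge c_1{\rm r}(\Sigma)$ no longer forces $N\gtrsim{\rm r}(\Sigma)$. In exactly that regime you have no control of $\P(\Omega_0^c)$, so the reduction is circular.

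The fix is to keep the lemma's sample-size condition as a hypothesis. Alternatively, restrict explicitly to $\Delta\le 1$; there your derivation of $N\gtrsim{\rm r}(\Sigma)+R^4$ is fine.
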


\begin{proof}[Proof of Lemma \ref{lem:Bai.Yin.non.isotropic}]

Let  $\beta$  be the absolute constant  appearing in Theorem \ref{thm:AZ} below and set $\eta=\frac{1}{2\beta} \leq 1 $.
Put $K=\eta N$ and assume without loss of generality that $K$ is an integer.
Set  $L={\rm span}(e_1,\dots,e_{K})$ and $M={\rm span}(e_{K+1},\dots, e_{d})$ (if $K\geq d$, then $M=\emptyset$); $P_{L}$ and $P_{M}$ are the $\ell_2$-projections on $L$ and $M$, respectively.

The estimates in case $x\in L$ and $x\in M$ are different: we show that 
with probability at least $1-\frac{1}{K}-\delta$, for every $x\in L$, 
\begin{align}
\label{eq:ass.estim.L}
\|\inr{X,x}\|_{L_2(\P_N)} \leq c_1(q,L)R \|\inr{X,x}\|_{L_2}
\end{align}
and with probability at least $1-c_2(q,L)\frac{1}{N}-\delta$,  
\begin{align}
\label{eq:ass.estim.M}
\sup_{x \in M : \|x\|_{2}\leq 1}  \,\sum_{i=1}^N\inr{X_i,x}^2\leq c_3(q,L)R^2 {\rm tr}(\Sigma).
\end{align}

\vspace{0.5em}
\noindent
\emph{Step 1 (the estimate on the subspace $L$):}
Set $Y_i=P_L \Sigma^{-1/2}X_i$ and recall that with probability $1-\delta$,  
\[\max_{i\leq N} \|Y_i\|_{2}
\leq  \max_{i\leq N} \|P_{1:N} \Sigma^{-1/2} X_i\|_{2}
\leq R \sqrt N
= R \eta^{-1/4} (KN)^{1/4}.\] 
It follows from Theorem \ref{thm:Bai.Yin} that with probability at least $1-\frac{1}{K}-\delta$,
\begin{align*}
 \sup_{x\in L : \|x\|_{2}=1 }\, \left| \frac{1}{N}\sum_{i=1}^N \inr{Y_i,x}^2 - 1 \right|
&\leq  c_4(q,L)  R^2  \eta^{-1/2}    \sqrt \frac{K}{N}
= c_4 R^2.
\end{align*}
On that event, for every $x\in L $ satisfying $  \|x\|_{2}=1$,
\[\|\inr{Y,x}\|_{L_2(\P_N)}^2 
\leq  c_4 R^2 +1 
= (c_4 R^2 + 1 ) \|\inr{Y,x}\|_{L_2}^2,\]
 and positive homogeneity shows that  for every $x\in L$, $\|\inr{Y,x}\|_{L_2(\P_N)} \leq \sqrt{c_4 R^2 + 1} \|\inr{Y,x}\|_{L_2}$.
Hence \eqref{eq:ass.estim.L} follows by transforming back to the original coordinate system.

\vspace{0.5em}
\noindent
\emph{Step 2 (the estimate on $M$):}
Set $Z=P_{M}X$ and $Z_i=P_{M}X_{i}$, and put $\Sigma^Z=\C[Z]$.
Observe that, since $(\lambda_i)_{i=1}^d$ is non-increasing,
\[ \|\Sigma^{Z}\|_{\rm op}
=\lambda_{K+1}
\leq \frac{1}{K} \sum_{i=1}^{K} \lambda_i
\leq \frac{{\rm tr}(\Sigma) }{K}.\]
Assume first that  $\lambda_{K+1} \geq \frac{1}{2} \frac{{\rm tr}(\Sigma) }{K}$; we explain the minor changes needed in the general case at the end of the proof.

By interchanging  two suprema,
\begin{align*} 
\sup_{x\in M : \|x\|_{2}\leq 1}\, \sum_{i=1}^N \inr{Z_i,x}^2
&=  \sup_{x\in M : \|x\|_{2}\leq 1} \sup_{a\in S^{N-1}}  \left( \sum_{i=1}^N a_i\inr{Z_i,x} \right)^2 \\
&= \sup_{a\in S^{N-1}}  \left\| \sum_{i=1}^N a_i Z_i \right\|_2^2
= (\ast).
\end{align*}
Following the notation used in \cite{abdalla2022covariance} and setting  $[N]=\{1,\dots,N\}$, let 
\[ f(s, [N]) 
=\sup_{a\in S^{N-1} \, : \, |\{ i \leq N : a_i \neq 0\}| \leq s} \,  \left\| \sum_{i=1}^N a_i Z_i \right\|_2^2\]
for $s\geq 1$.
Note that $(\ast)= f(N,[N])$ and that $f(N,[N])\leq \frac{1}{\eta^2} f(K,[N])$; hence to establish \eqref{eq:ass.estim.M} it remains to show that with high probability, $f(K,[N])\lesssim R^2 {\rm tr}(\Sigma)$.
To that end, we apply  \cite[Theorem 3]{abdalla2022covariance}:

\begin{theorem}[{\cite[Theorem 3]{abdalla2022covariance}}]
\label{thm:AZ}
There is an absolute constant $\beta\geq 1$ and constants $c,c'$ depending only on $q$ and $L$ such that the following holds:
For all integers $N$ and $s$ that satisfy   $\frac{1}{\beta}N\geq s\geq {\rm r}(\Sigma^Z)$,  with probability at least $1-\frac{c}{N}$,
\begin{align*}
f(s,[N]) 
\leq c' \left( \max_{i\leq N} \|Z_i\|_2^2 + \|\Sigma^Z\|_{\rm op} \cdot s  \left(\frac{N}{s}\right)^{4/(4+q)} \log^4\left( \frac{N}{s}\right) \right).
\end{align*}
\end{theorem}

We claim that the condition  $\frac{1}{\beta}N\geq s\geq {\rm r}(\Sigma^Z)$ in Theorem \ref{thm:AZ} is satisfied for $s=2K$.
Indeed, since  $\|\Sigma^Z\|_{\rm op}=\lambda_{K+1} \geq \frac{ {\rm tr}(\Sigma) }{2K}$,  we have 
\[ {\rm r}(\Sigma^Z) 
= \frac{{\rm tr}(\Sigma^Z)}{ \|\Sigma^Z\|_{\rm op}}
\leq \frac{{\rm tr}(\Sigma)}{\lambda_{K+1}}
\leq 2K,\]
and the claim follows since  $K=\eta N$ and $\eta=  \frac{1}{2\beta}$.
Moreover, using that $\|\Sigma^Z\|_{\rm op}\leq \frac{ {\rm tr}(\Sigma) }{K}$, that $\|Z_{i}\|_{2}\leq \|X_{i}\|_{2}$, and that $\max_{i\leq N} \|X_i\|_2^2 \leq R^2{\rm tr}(\Sigma)$ with probability $1-\delta$ by assumption, it is straightforward to verify that with probability $1-\delta$,
\[\max_{i\leq N} \|Z_i\|_2^2 + \|\Sigma^Z\|_{\rm op} \cdot 2K  \left(\frac{N}{2K}\right)^{4/(4+q)} \log^4\left( \frac{N}{2K}\right) 
\leq  c_5(q,L) R^2 \mathop{\rm tr}(\Sigma).\]
Finally, $f(K,[N])\leq f(2K,[N])$, which concludes the proof of \eqref{eq:ass.estim.M}.

\vspace{0.5em}
\noindent
\emph{Step 3 (putting the estimates together):}
On the intersection of the events in which the assertions of Step 1 and Step 2 hold, set $x\in\R^d$ with $\|x\|_2\leq 1$.
Write $x=y+z\in L\oplus M$ and note that $\|\inr{X,y}\|_{L_2}\leq \|\inr{X,x}\|_{L_2}$.
Therefore, 
\begin{align*}
 \|\inr{X,x}\|_{L_2(\P_N)}
&\leq \|\inr{X,y}\|_{L_2(\P_N)}  + \|\inr{X,z}\|_{L_2(\P_N)}
\\
&\leq c_6(q,L)R \left(  \|\inr{X,y}\|_{L_2} + \sqrt{\frac{{\rm tr}(\Sigma)}{N}} \right) .
\end{align*}
as claimed.

\vspace{0.5em}
\noindent
\emph{Step 4 (on the assumption that $\lambda_{K+1} \geq \frac{{\rm tr}(\Sigma) }{2K}$):}
If  the assumption is not satisfied, we proceed as follows.
Set $\xi= \frac{{\rm tr}(\Sigma) }{K}$, let $B$ be the standard Bernoulli random vector in $\R^{K}$, independent of $X$, and put  \[\tilde X = (X,\sqrt \xi B)\in\R^{d+K}.\]
Setting  $\tilde \Sigma=\C[\tilde X]$, we have ${\rm tr}(\tilde \Sigma) = 2 {\rm tr}(\Sigma)$ and $\|\tilde \Sigma\|_{\rm op} \geq \|\Sigma\|_{\rm op}$;  thus   ${\rm r}(\tilde \Sigma)\leq 2 {\rm r}(\Sigma)$ and  $\lambda_K(\tilde \Sigma)\geq \xi = \frac{{\rm tr}(\tilde \Sigma)}{2K}$.
Moreover,  $\tilde  X$ satisfies $L_{q}-L_{2}$ norm equivalence with constant $L+c(q)$, and, almost surely, $\|\tilde  X\|_{2}\leq \|X\|_2 + \sqrt{{\rm tr}(\Sigma)}$ and $\|P_{1:N} \tilde \Sigma^{{-1/2}} \tilde  X\|_{2} \leq \|P_{1:N}\Sigma^{{-1/2}} X \|_2 + \sqrt{K}$.
We repeat Step 1-3 for $\tilde X$.
\end{proof}

\subsection{Classes of linear functionals indexed by $A\subset \R^{d}$}

Let  $\mathcal{F}=\mathcal{F}_{A} =  \{ \langle \,\cdot \,, x\rangle : x\in A\}$ where $A\subset \R^d$ and $X\in\R^{d}$ is isotropic. 
Let   $c_{0}$ be a suitable absolute constant, set $w$ to be a random variable with zero mean and unit variance that satisfies the following `local' moment condition---that for some $\alpha>0$,
\begin{align}
\label{eq:local.psi.alpha}
 \|w\|_{L_p} \leq L p^{1/\alpha} \|w\|_{L_{2}} \quad\text{for }  2\leq p \leq c_{0}\log(ed),
\end{align}
and put  $X=(w_{1},\dots,w_{d})$ where the $w_{i}$'s are independent copies of $w$.

\begin{remark}
	Note that  \eqref{eq:local.psi.alpha} is indeed much weaker than \eqref{eq:def.subgaussian}; for example $w$ need not  have any moments beyond $c_{0}\log(ed)$.
\end{remark}
 
If the random vector $X$ satisfies the local moment condition and $A$ is not too small---in the sense that its critical dimension is at least logarithmic in $d$, then Assumption \ref{ass:main} is satisfied:

\begin{lemma}
\label{prop:assumption.non.gaussian}
	There are absolute constants $c_{0}, c_{1},c_{2}$ and, for every $L,\alpha >0$, there is a constant $c_{3}=c_{3}(L,\alpha)$ such that the following holds.
	Suppose that $w$ satisfies \eqref{eq:local.psi.alpha} with $L$ and $c_{0}$,  $A\subset \R^d$ satisfies $0\in A$ and   $d^{\ast}(A)\geq c_{1}\log(ed)$, and $N\geq \log^{4/\alpha+ 1}(ed)$.
	Put
	\[ \xi = \log^{2/\alpha + 1}\left(\frac{ed}{ d^{\ast}(A)}\right).\] 
	Then  Assumption \ref{ass:main} is satisfied with set  $\Omega_{0}$ of measure $\P(\Omega_{0})\geq 1 - \exp(-c_2 d^{\ast}(A))-d^{-10} $ and constants $B=c_{3} $, $\theta = c_3  \xi \gamma_{2}(A) $.

\end{lemma}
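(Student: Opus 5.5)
The plan is to copy the proof of Lemma \ref{prop:assumption.gaussian}, replacing the subgaussian quadratic-process bound by a bound valid under the local moment condition \eqref{eq:local.psi.alpha}. First I would reduce to a star-shaped class. Set $\mathcal{H}={\rm conv}\{\inr{\cdot,x-y}: x,y\in A\}$, which corresponds to a convex symmetric set $H\subset\R^d$ containing $0$. Since $X$ is isotropic, the $L_2$ norm is the Euclidean norm, so $\|\inr{X,x}\|_{L_2}=\|x\|_2$. Moreover $\gamma_2(H)\sim\gamma_2(A)$ and $d_H\le 2d_A$. By star-shapedness it suffices to show the following: for every $r\ge \theta/\sqrt N$, with the required probability,
\[
\sup_{x\in H,\ \|x\|_2\le r}\|\inr{X,x}\|_{L_2(\P_N)}\le c_3 r .
\]
Indeed, if $x-y\in H$ has $\|x-y\|_2=\rho$, we apply this with $r=\max\{\rho,\theta/\sqrt N\}$. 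That gives $\|x-y\|_{L_2(\P_N)}\le c_3\|x-y\|_2+c_3\theta/\sqrt N$. After that, only a single scale $r=\theta/\sqrt N$ is needed, plus a scale-peeling (homogeneity) argument for larger $r$: the ratio $\|\inr{X,x}\|_{L_2(\P_N)}/\|x\|_2$ is scale invariant.

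Second, I would bound $\sup_{x\in H_r}\frac1N\sum_i\inr{X_i,x}^2$ via the supremum of the empirical process $\sup_{x\in H_r}\bigl(\sum_i\inr{X_i,x}^2\bigr)^{1/2}=\sup_{x\in H_r}\sup_{a\in S^{N-1}}\sum_i a_i\inr{X_i,x}$. Here I would invoke a known chaining result for vectors with i.i.d.\ coordinates satisfying only $\log d$ moments. A Mendelson-type bound for ``$\psi_\alpha$ up to $\log d$'' coordinates controls $\E\sup_{x\in H_r}\|\Gamma x\|_2$ by $c(L,\alpha)\bigl(\sqrt N r+\xi\gamma_2(H_r)\bigr)$. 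The logarithmic loss comes from the fact that for a coordinate vector $w$ the process $\inr{X,x}$ behaves subgaussianly only up to moment $\log(ed)$. Beyond that one uses an $\ell_1$-type (``sparse'') complexity, which is compared to $\gamma_2$ at a cost $\log^{1/\alpha}$ per level. The ratio $ed/d^\ast(A)$ enters because only coordinates at scales finer than the critical dimension require the heavy-tail correction. The condition $N\ge\log^{4/\alpha+1}(ed)$ ensures the deterministic part dominates.

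Third, I need to upgrade the expectation bound to probability $1-\exp(-c_2d^\ast(A))-d^{-10}$. The $d^{-10}$ term comes from truncating each coordinate $w_{ij}$ at level $\sim L\log^{1/\alpha}(ed)$ via the moment bound with $p\sim\log(ed)$ and a union bound over the $Nd$ entries; this requires $N$ to be at most polynomial in $d$, or else a slight adjustment. On the truncated event the summands are bounded, so Talagrand's concentration inequality for the supremum (or Theorem \ref{thm:BBLM} with $q\sim d^\ast(A)$) gives deviations of order $r\sqrt{d^\ast(A)}+\xi\gamma_2(A)/\sqrt N$. Since $d^\ast(A)\ge c_1\log(ed)$ and $\theta=c_3\xi\gamma_2(A)$, these are absorbed into $c_3 r$ for $r\ge\theta/\sqrt N$.

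The main obstacle is the second step: getting the $\gamma_2$ complexity with only the polylogarithmic factor $\xi$, rather than a factor like $\sqrt{\log d}$ times a sparse-vector complexity, under merely $\log d$ moments. If no ready-made statement applies, I would decompose each $x$ into its $k$ largest coordinates with $k\sim d^\ast(A)$ plus the remainder. I would handle the remainder by the subgaussian-like chaining for $p\le\log(ed)$, and the $k$-sparse part by a union bound over supports, which costs $k\log(ed/k)$. That cost is what produces the $\log(ed/d^\ast(A))$ power in $\xi$.
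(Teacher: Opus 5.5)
Your first step, passing to the star-shaped set generated by ${\rm conv}(A-A)$ and reducing to the single scale $r=\theta/\sqrt N$, is exactly the paper's reduction. The gap is in your second and third steps, which carry all the content of the lemma.

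\textbf{The second and third steps do not work as proposed.} The second step is an unidentified ``Mendelson-type'' row-wise bound, and the high-probability upgrade in the third step fails, for three reasons.
\begin{enumerate}
\item \emph{Truncation.} You truncate every entry at $T\sim L\log^{1/\alpha}(ed)$ and take a union bound over the $Nd$ entries. This costs $Nd\,(ed)^{-C}$ for a fixed $C$, so it gives $d^{-10}$ only if $N\le{\rm poly}(d)$. The lemma has no upper bound on $N$, and $w$ may have no moments beyond $c_0\log(ed)$, so $\max_{i,j}|w_{ij}|$ is genuinely unbounded as $N$ grows.
\item \emph{Talagrand's inequality.} Even on the truncated event, the envelope for $\sup_{x,a}\sum_i a_i\inr{X_i,x}$ is $\sup_{x\in H_r}|\inr{X_i,x}|\le T\sup_{x\in H_r}\|x\|_1$. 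This can be of order $T\sqrt d\,r$, so the envelope-times-$d^\ast(A)$ term is not absorbed by $r\sqrt N$, and the deviation bound you state does not follow.
\item \emph{Theorem \ref{thm:BBLM}.} Applying it row-wise with $q\sim d^\ast(A)$ needs moments of $\inr{X,x}$ of order $q\gg\log d$. These need not exist: take $x=e_1$.
\end{enumerate}
So no row-wise concentration argument along these lines delivers confidence $1-\exp(-c_2d^\ast(A))-d^{-10}$ under only $\log d$ moments.

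\textbf{The missing idea is to work with columns.} Since $X$ has i.i.d.\ coordinates, $\Gamma=\sum_{j=1}^d\inr{\cdot,e_j}Z_j$ with i.i.d.\ columns $Z_j=(w_{1j},\dots,w_{Nj})\in\R^N$. Lemma \ref{lem:local.psi.alpha} shows, via symmetrization, two things:
\begin{itemize}
\item $\|\inr{Z,z}\|_{L_p}\lesssim Lp^{1/\alpha+1/2}\|z\|_2$ for $2\le p\le c_0\log d$;
\item $\max_{j\le d}\bigl|\|Z_j\|_2^2/N-1\bigr|\lesssim L\log^{2/\alpha+1/2}(d)/\sqrt N$ with probability $1-d^{-10}$.
\end{itemize}
The second comes from Markov with $p=\log d$ and $\lambda=e^{11}$, plus a union bound over only $d$ columns. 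That, not entrywise truncation, is the source of the $N$-free term $d^{-10}$.

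These are the hypotheses of \cite[Theorem 1.5]{bartl2022random}. That theorem gives expectation and deviation at once: with probability $1-d^{-10}-2\exp(-cd^\ast(V))$,
\[
\sup_{x\in V}\bigl|\,\|\Gamma x\|_2^2/N-\|x\|_2^2\bigr|\lesssim\mathcal{E}(V),
\]
and $\mathcal{E}(V)$ carries the factor $\log^{2/\alpha+1}(ed/d^\ast(V))$ that becomes $\xi$. Apply it to $V=A_r$ with $r=\xi\gamma_2(A)/\sqrt N$, and check that $d^\ast(A_r)\gtrsim d^\ast(A)\gtrsim\log d$ and $\mathcal{E}(A_r)\lesssim r^2$. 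The hypothesis $N\ge\log^{4/\alpha+1}(ed)$ is used exactly to make the column-norm term $\log^{2/\alpha+1/2}(d)\,d_{A_r}^2/\sqrt N$ at most $r^2$.
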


\begin{corollary}
\label{prop:assumption.non.gaussian.main.W1}
	In the setting of Lemma \ref{prop:assumption.non.gaussian}  and using its notation: with probability at least $1 - \exp(-c_2 d^{\ast}(A))-d^{-10}$,
	\[ \sup_{x\in A } \W_1(F_{N,x},F_x) 
	\leq c_4(L,\alpha)\frac{ \max\{  \xi \gamma_2(A) \, ,\,  \log^2(N) \} }{\sqrt N}.\]
\end{corollary}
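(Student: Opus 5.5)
The plan is to obtain Corollary \ref{prop:assumption.non.gaussian.main.W1} by feeding Lemma \ref{prop:assumption.non.gaussian} into Theorem \ref{thm:main}. The only real choice is $\Delta$, which should be tuned so that both the restriction of Theorem \ref{thm:main} and the resulting bound $d_{\mathcal{F}}\sqrt{\Delta}$ match the claimed rate.

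\textbf{Step 1.} Apply Lemma \ref{prop:assumption.non.gaussian}. It gives Assumption \ref{ass:main} with $B=c_3$ and $\theta=c_3\xi\gamma_2(A)$, on an event $\Omega_0$ with $\P(\Omega_0^c)\le \exp(-c_2 d^\ast(A))+d^{-10}$. Here $\mathcal{F}=\mathcal{F}_A$ and $X$ is isotropic, so $\|\inr{X,x}\|_{L_2}=\|x\|_2$ and $d_{\mathcal{F}}=\sup_{x\in A}\|x\|_2$.

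\textbf{Step 2.} Choose
\[
\Delta = c_1(B)\,\frac{\max\{\theta^2/d_{\mathcal{F}}^2,\ \log^4 N\}}{N},
\]
which is the smallest value permitted by Theorem \ref{thm:main}. That theorem then yields, with probability at least $1-\exp(-c_2(B)\Delta N)-\P(\Omega_0^c)$,
\[
\sup_{x\in A}\W_1(F_{N,x},F_x)\le d_{\mathcal{F}}\sqrt{\Delta}\lesssim_{L,\alpha} \frac{\max\{\xi\gamma_2(A),\ d_{\mathcal{F}}\log^2 N\}}{\sqrt N}.
\]
This matches the stated bound, with $d_{\mathcal{F}}$ absorbed into the normalization; presumably $A\subset B_2^d$ is intended, as in the rest of the paper.

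\textbf{Step 3.} Convert the probability estimate. Since $\Delta N\ge c\,\theta^2/d_{\mathcal{F}}^2\gtrsim \xi^2\gamma_2^2(A)/d_{\mathcal{F}}^2\gtrsim d^\ast(A)$, using $\xi\ge1$ and Talagrand's $\gamma_2\sim\E\sup G$, we get $\exp(-c_2\Delta N)\le \exp(-c'd^\ast(A))$. Combined with $\P(\Omega_0^c)$, the total failure probability is at most $2\exp(-c''d^\ast(A))+d^{-10}$. After adjusting constants this becomes the stated $1-\exp(-c_2d^\ast(A))-d^{-10}$, because $d^\ast(A)\ge c_1\log(ed)$ lets the factor $2$ be absorbed.

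\textbf{Main obstacle.} There is nothing deep in this argument; the only care needed is the bookkeeping in Step 3. The constants in the exponent have to come out uniform in $L$ and $\alpha$ (via $B=c_3(L,\alpha)$), and $\exp(-c\Delta N)$ must genuinely be dominated by $\exp(-c_2 d^\ast(A))$. If a constant mismatch arises, $\Delta$ can simply be enlarged by a constant factor, which only affects $c_4(L,\alpha)$.
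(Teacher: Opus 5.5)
Your proposal is correct and matches the paper's argument, which the paper leaves implicit: feed Lemma \ref{prop:assumption.non.gaussian} into Theorem \ref{thm:main} with the smallest admissible $\Delta$, giving the bound $\max\{\xi\gamma_2(A), d_{\mathcal{F}}\log^2 N\}/\sqrt N$ and a failure probability of order $\exp(-c\,d^\ast(A))+d^{-10}$, since $\theta^2/d_{\mathcal{F}}^2\gtrsim d^\ast(A)$. Your caveat that the stated form tacitly needs $d_{\mathcal{F}}=\sup_{x\in A}\|x\|_2\lesssim 1$ is right, because the stated bound is not scale-invariant, and enlarging $\Delta$ by a factor depending on $L,\alpha$ is a legitimate way to make the constant in the exponent absolute, at the cost of a larger $c_4(L,\alpha)$.
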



The proof of Lemma \ref{prop:assumption.non.gaussian}  resembles that one of Lemma \ref{prop:assumption.gaussian} but requires some preparations.
Denoting by $\Gamma $ the matrix that has $ X_{i}$ as its rows, the central ingredient in the proof of Lemma \ref{prop:assumption.gaussian} was to establish  appropriate estimates on $\sup_{x }| \|\frac{1}{\sqrt N}\Gamma x\|_{2}^{2} - \|x\|_{2}^{2}|$,  where the supremum is taken over $x$ in localizations of $A$.
To establish those estimates, we will rely on the results in \cite{bartl2022random} on structure preservation of random matrices with i.i.d.\ columns.
Note that since each $X$ has i.i.d.\ coordinates, the columns of $\Gamma $ are i.i.d., and we denote them by $Z_{1},\dots, Z_{d}\in\R^{N}$, thus $\Gamma=\sum_{j=1}^d \inr{\cdot,e_j}Z_j$.

We start by proving the (rather standard) result showing that $Z$ inherits the local moment condition required in \cite{bartl2022random}.

\begin{lemma}
\label{lem:local.psi.alpha}
	There is an absolute constant $c$ such that the following holds.
	Let $w$ be a random variable with zero mean and unit variance and set $Z=(w_{1},\dots, w_{N})$ where the $w_{i}$'s are independent  copies of $w$.
	Then, for every $z\in S^{N-1}$ and every  $p\geq 2$,
	\[\|\inr{Z,z}\|_{L_p}\leq c \sqrt{p} \|w\|_{L_{p}}.\]
	Moreover, if $Z_{1},\dots, Z_{d}$ are independent copies of $Z$, then for every $p\geq 2$ and for every $\lambda \geq 1$,  with probability at least $1-d\lambda^{-p}$,
	\[ \max_{j=1,\dots,d }\left| \frac{ \|Z_{j}\|_{2}^{2} }{N} - 1 \right| 
	\leq  c \lambda   \frac{ \sqrt p \|w^{2}\|_{L_p} }{\sqrt N} .\]
\end{lemma}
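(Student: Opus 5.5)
The plan is to treat both parts of Lemma \ref{lem:local.psi.alpha} as moment bounds for sums of independent centred random variables and to use a Rosenthal-type inequality. The first claim will follow from the symmetrized version of that inequality, and the second from the same inequality applied to $w^2-1$, followed by Markov's inequality and a union bound over $j\le d$.

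\emph{First claim.} Fix $z\in S^{N-1}$ and write $\inr{Z,z}=\sum_{i=1}^N z_i w_i$.
\begin{itemize}
\item By symmetrization (the $w_i$ are centred), $\|\sum_i z_iw_i\|_{L_p}\le 2\|\sum_i \varepsilon_i z_i w_i\|_{L_p}$, where $(\varepsilon_i)$ are independent signs.
\item Conditionally on the $w_i$, Khintchine's inequality gives $\|\sum_i\varepsilon_i z_iw_i\|_{L_p(\varepsilon)}\le c\sqrt p\,(\sum_i z_i^2w_i^2)^{1/2}$.
\item Taking $L_p$ norms in $w$ and using the triangle inequality in $L_{p/2}$ (valid since $p\ge2$):
\[
\Bigl\|\Bigl(\sum_i z_i^2w_i^2\Bigr)^{1/2}\Bigr\|_{L_p}=\Bigl\|\sum_i z_i^2w_i^2\Bigr\|_{L_{p/2}}^{1/2}\le\Bigl(\sum_i z_i^2\|w^2\|_{L_{p/2}}\Bigr)^{1/2}=\|w\|_{L_p}.
\]
\end{itemize}
This yields $\|\inr{Z,z}\|_{L_p}\le 2c\sqrt p\,\|w\|_{L_p}$, as claimed.

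\emph{Second claim.} Set $u_i=w_i^2-1$. These are i.i.d.\ and centred since $\E w^2=1$, and $\frac{\|Z\|_2^2}{N}-1=\frac1N\sum_{i=1}^N u_i$.
\begin{itemize}
\item Repeat the symmetrization and Khintchine step with $z_i=1$ to get
\[
\Bigl\|\sum_i u_i\Bigr\|_{L_p}\le c\sqrt p\,\Bigl\|\sum_i u_i^2\Bigr\|_{L_{p/2}}^{1/2}\le c\sqrt p\,\sqrt N\,\|u\|_{L_p}.
\]
\item Since $\|w^2\|_{L_p}\ge\|w^2\|_{L_1}=1$, we have $\|u\|_{L_p}\le\|w^2\|_{L_p}+1\le2\|w^2\|_{L_p}$.
\item Hence $\bigl\|\frac{\|Z\|_2^2}{N}-1\bigr\|_{L_p}\le 2c\sqrt p\,\|w^2\|_{L_p}/\sqrt N$.
\end{itemize}
By Markov's inequality applied to the $p$-th moment, for each $j$,
\[
\P\Bigl(\Bigl|\frac{\|Z_j\|_2^2}{N}-1\Bigr|> \lambda\cdot 2c\sqrt p\,\frac{\|w^2\|_{L_p}}{\sqrt N}\Bigr)\le\lambda^{-p}.
\]
A union bound over $j=1,\dots,d$ then gives failure probability at most $d\lambda^{-p}$.

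No step is a genuine obstacle. The only care needed is in the triangle inequality in $L_{p/2}$, which is legitimate precisely because $p\ge2$, and in absorbing the constant $1$ into $\|w^2\|_{L_p}$, which uses $\|w^2\|_{L_p}\ge\|w^2\|_{L_1}=1$.
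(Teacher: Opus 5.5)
Your proof is correct and follows essentially the same route as the paper: symmetrization, then Khintchine's inequality conditionally on the $w_i$, then a bound on $\E\bigl(\sum_i z_i^2 w_i^2\bigr)^{p/2}$ by $\|w\|_{L_p}^p$, and finally Markov's inequality with a union bound over $j$. The differences are only cosmetic. You use Minkowski's inequality in $L_{p/2}$ where the paper uses convexity of $v\mapsto \E|\sum_i v_i w_i^2|^{p/2}$ on $B_1^N$, and the two are equivalent. You also symmetrize the centred variables $w_i^2-1$ and absorb the $1$ via $\|w^2\|_{L_p}\ge 1$, whereas the paper symmetrizes directly to $\sum_i \varepsilon_i w_i^2$. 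Note that the tool you actually invoke is Khintchine's inequality, not a Rosenthal-type inequality.
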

\begin{proof}
	Let $(\varepsilon_i)_{i=1}^N$ be i.i.d.\ Bernoulli random variables that are also independent of $Z$.
	By a standard symmetrization argument (see, e.g., \cite[Lemma 6.3]{ledoux1991probability}),
	\[ \|\inr{Z,z}\|_{L_p} 
	=\left\| \sum_{i=1}^N w_{i}  z_i \right\|_{L_p}
	\leq 2 \left\| \sum_{i=1}^N \varepsilon_i w_{i } z_i \right\|_{L_p}.\]
	Moreover, Bernoulli random variables are subgaussian with an absolute constant, in particular $\|\sum_{i=1}^N \varepsilon_i v_i \|_{L_p}\leq c \sqrt{p}\|v\|_2$ for every $v\in\R^N$ and $p\geq 2$.
	Applying the latter inequality  conditionally on $Z$,
	\begin{align*}
	\left\| \sum_{i=1}^N \varepsilon_i w_{i} z_i  \right\|_{L_p}^p
	=\E_Z\E_{\varepsilon}\left| \sum_{i=1}^N \varepsilon_i w_{i} z_{i} \right|^p
	\leq \E_Z  \left(c\sqrt{p}\right)^p  \left| \sum_{i=1}^N  w_{i}^2 z_i^2 \right|^{p/2},
	\end{align*}
	where here $\E_Z$ and $\E_{\varepsilon}$ denote the expectation  with respect to $Z$ and $(\varepsilon_i)_{i=1}^N$, respectively. 
	Let $B_1^N$  be the $\ell_1^N$-unit ball and observe that
	\begin{align*}
	\sup_{z\in B_2^N} \E_Z \left| \sum_{i=1}^N w_{i}^2 z_i^2 \right|^{p/2}
	=\sup_{v\in B_1^N} \E_Z \left| \sum_{i=1}^N w_{i}^2 v_i \right|^{p/2}
	=\max_{v\in B_1^N} \psi(v).
	\end{align*}
	As $\psi$ is convex, it follows that $\max_{v\in B_1^N} \psi(v)$ is attained at an extreme point of $B_1^N$, i.e.\ for $v$ of the form $v=\pm e_i$, $1\leq i\leq N$; hence
	\[\max_{v\in B_1^N} \psi(v)
	=\max_{1\leq i\leq N} \E_Z \left|  w_{i}^2  \right|^{p/2}
	=\|w\|_{L_p}^p.\]
	Combining all estimates, $\| \inr{Z,z}\|_{L_p} \leq  2c \sqrt{p} \|w\|_{L_p}$.
	
	As for the second statement, by a standard symmetrization argument followed by the arguments presented in the previous step, 
	\begin{align*}
	\left\| \frac{\|Z\|_{2}^{2}}{N} - 1 \right\|_{L_{p}}
	&\leq 2 \frac{1}{N} \left\| \sum_{i=1}^N \varepsilon_i w_{i }^{2} \right\|_{L_p} 
	\leq \frac{2c  \sqrt{p} \|w^{2}\|_{L_{p}}}{\sqrt N}.
	\end{align*}
	Hence, by Markov's inequality, for every $\lambda\geq 1$,
	\[ \P\left( \left| \frac{ \|Z\|_{2}^{2} }{N} - 1 \right| \geq \lambda  \frac{2c\sqrt{p} \|w^{2}\|_{L_{p}} }{\sqrt N} \right)\leq \lambda^{-p},\]
	and the claim follows from the union bound over $j=1,\dots, d$.
\end{proof}

\begin{proof}[Proof of Lemma \ref{prop:assumption.non.gaussian}]
	Set $Z = (w_{1},\dots, w_{N})$.
	The first part of Lemma \ref{lem:local.psi.alpha} implies that for every $z\in \R^{N}$ and every  $2\leq p \leq c_{0} \log(d)$, 
\[\|\inr{Z,z}\|_{L_p}\leq c_{1} L p^{1/\alpha + 1/2}   \|z\|_{2}\]
	and the second part of that lemma (applied with $\lambda = e^{11}$ and $p=\log(d)$) shows that,  with probability at least $1-d^{-10}$,
\[ \max_{j=1,\dots,d }\left| \frac{ \|Z_{j}\|_{2}^{2} }{N} - 1 \right| 
	\leq  c_{2} L    \frac{\log^{2/\alpha + 1/2}(d)  }{\sqrt N} .\]
	Thus $Z$ satisfies the assumption needed in \cite{bartl2022random} and it follows from Theorem 1.5 therein that	for every set $V\subset \R^{d}$ satisfying $d^{\ast}(V)\geq \log(d)$, setting
	\[ \mathcal{E}(V) =   \frac {\log^{2/\alpha + 1/2}(d)  }{\sqrt N}  d_{V}^{2}+  \log^{2/\alpha+ 1} \left( \frac{ e d }{ d^{\ast}(V) }\right)  \left( d_{V} \frac{\gamma_{2}(V)}{\sqrt N } +  \frac{\gamma_{2}^{2}(V)}{ N } \right)  ,\]
	with probability at least $1- d^{-10} - 2\exp(-c_{3} d^{\ast}(V) )$,
	\begin{align}
	\label{eq:almost.gaussian.embedding}
	\sup_{x\in V} \left| \left\| \frac{1}{\sqrt N} \Gamma x \right\|_{2}^{2} - \|x\|_{2}^{2 } \right| 
	\leq  c_{4}(L,\alpha) \mathcal{E}(V).
	\end{align}

	From this point, the proof of the lemma follows from the same path as the one presented for Lemma \ref{prop:assumption.gaussian} and we only sketch it:
	Set $\xi = \log^{2/\alpha + 1} ( \frac{ed}{d^{\ast}(A) }),$
	and put $r =  \xi \frac{\gamma_{2}(A)}{\sqrt N}$.
	Apply \eqref{eq:almost.gaussian.embedding} to 
	\[ A_r=\left\{ x\in {\rm conv} (\{ y-z: y,z\in A\cup\{0\}\}) : \|x\|_{2}\leq r\right\}.\]
	The claim follows by 	noting that  $d^{\ast}(A_{r})\geq c_{5}d^{\ast}(A)\geq c_{6} \log(d)$ and that  $\mathcal{E}(A_{r}) \leq c_{7}(L,\alpha)r^{2}$, where the latter holds because  $N \geq \log^{4/\alpha+1}(d)$.
\end{proof}

\vspace{1em}

\noindent
{\bf Acknowledgement:} 
This research was funded in whole or in part by the Austrian Science Fund (FWF) [doi: 10.55776/P34743 and 10.55776/ESP31], the Austrian National Bank [Jubil\"aumsfond, project 18983], and a Presidential Young Professorship grant [`Robust statistical learning for complex data'].

\bibliographystyle{abbrv}

\begin{thebibliography}{10}

\bibitem{abdalla2022covariance}
P.~Abdalla and N.~Zhivotovskiy.
\newblock Covariance estimation: Optimal dimension-free guarantees for
  adversarial corruption and heavy tails.
\newblock {\em Journal of the European Mathematical Society}, 28(4):1809--1847,
  2026.

\bibitem{adamczak2010quantitative}
R.~Adamczak, A.~Litvak, A.~Pajor, and N.~Tomczak-Jaegermann.
\newblock Quantitative estimates of the convergence of the empirical covariance
  matrix in log-concave ensembles.
\newblock {\em Journal of the American Mathematical Society}, 23(2):535--561,
  2010.

\bibitem{bai1993limit}
Z.-D. Bai and Y.-Q. Yin.
\newblock Limit of the smallest eigenvalue of a large dimensional sample
  covariance matrix.
\newblock {\em Annals of Probability}, 21(3):1275--1294, 1993.

\bibitem{bartl2022random}
D.~Bartl and S.~Mendelson.
\newblock Random embeddings with an almost gaussian distortion.
\newblock {\em Advances in Mathematics}, 400:108261, 2022.

\bibitem{bartl2025empirical}
D.~Bartl and S.~Mendelson.
\newblock Empirical approximation of the gaussian distribution in
  {$\mathbb{R}^d$}.
\newblock {\em Advances in Mathematics}, 460:110041, 2025.

\bibitem{bartl2022structure}
D.~Bartl and S.~Mendelson.
\newblock Structure preservation via the {W}asserstein distance.
\newblock {\em Journal of Functional Analysis}, 288:110810, 2025.

\bibitem{bartl2025uniform}
D.~Bartl and S.~Mendelson.
\newblock A uniform {D}voretzky--{K}iefer--{W}olfowitz inequality.
\newblock {\em Probability Theory and Related Fields}, pages 1--40, 2025.

\bibitem{boedihardjo2025sharp}
M.~T. Boedihardjo.
\newblock Sharp bounds for max-sliced wasserstein distances.
\newblock {\em Foundations of Computational Mathematics}, pages 1--32, 2025.

\bibitem{boucheron2005moment}
S.~Boucheron, O.~Bousquet, G.~Lugosi, and P.~Massart.
\newblock Moment inequalities for functions of independent random variables.
\newblock {\em Annals of Probability}, 33(2):514--560, 2005.

\bibitem{figalli2021invitation}
A.~Figalli and F.~Glaudo.
\newblock {\em An {I}nvitation to {O}ptimal {T}ransport, {W}asserstein
  {D}istances, and {G}radient {F}lows}.
\newblock EMS Textbooks in Mathematics, 2021.

\bibitem{gine2006concentration}
E.~Gin{\'e} and V.~Koltchinskii.
\newblock Concentration inequalities and asymptotic results for ratio type
  empirical processes.
\newblock {\em The Annals of Probability}, 34:1143--1216, 2006.

\bibitem{guedon2017interval}
O.~Gu{\'e}don, A.~E. Litvak, A.~Pajor, and N.~Tomczak-Jaegermann.
\newblock On the interval of fluctuation of the singular values of random
  matrices.
\newblock {\em Journal of the European Mathematical Society}, 19(5), 2017.

\bibitem{jirak2025concentration}
M.~Jirak, S.~Minsker, Y.~Shen, and M.~Wahl.
\newblock Concentration and moment inequalities for sums of independent
  heavy-tailed random matrices.
\newblock {\em Probability Theory and Related Fields}, pages 1--28, 2025.

\bibitem{koltchinskii2017concentration}
V.~Koltchinskii and K.~Lounici.
\newblock Concentration inequalities and moment bounds for sample covariance
  operators.
\newblock {\em Bernoulli}, pages 110--133, 2017.

\bibitem{latala1997estimation}
R.~Lata{\l}a.
\newblock Estimation of moments of sums of independent real random variables.
\newblock {\em Annals of Probability}, 25(3):1502--1513, 1997.

\bibitem{ledoux1991probability}
M.~Ledoux and M.~Talagrand.
\newblock {\em Probability in {B}anach {S}paces: isoperimetry and processes},
  volume~23.
\newblock Springer Science \& Business Media, 1991.

\bibitem{lugosi2024multivariate}
G.~Lugosi and S.~Mendelson.
\newblock Multivariate mean estimation with direction-dependent accuracy.
\newblock {\em Journal of the European Mathematical Society}, 26(6):2211--2247,
  2024.

\bibitem{mendelson2010empirical}
S.~Mendelson.
\newblock Empirical processes with a bounded $\psi_1$ diameter.
\newblock {\em Geometric and Functional Analysis}, 20(4):988--1027, 2010.

\bibitem{mendelson2016upper}
S.~Mendelson.
\newblock Upper bounds on product and multiplier empirical processes.
\newblock {\em Stochastic Processes and their Applications},
  126(12):3652--3680, 2016.

\bibitem{mendelson2007reconstruction}
S.~Mendelson, A.~Pajor, and N.~Tomczak-Jaegermann.
\newblock Reconstruction and subgaussian operators in asymptotic geometric
  analysis.
\newblock {\em Geometric and Functional Analysis}, 17(4):1248--1282, 2007.

\bibitem{mendelson2014singular}
S.~Mendelson and G.~Paouris.
\newblock On the singular values of random matrices.
\newblock {\em Journal of the European Mathematical Society}, 16(4):823--834,
  2014.

\bibitem{olea2022generalization}
J.~L.~M. Olea, C.~Rush, A.~Velez, and J.~Wiesel.
\newblock On the generalization error of norm penalty linear regression models.
\newblock {\em Annals of Statistics, to appear}, 2025.

\bibitem{pisier1999volume}
G.~Pisier.
\newblock {\em The volume of convex bodies and {B}anach space geometry},
  volume~94.
\newblock Cambridge University Press, 1999.

\bibitem{talagrand1987regularity}
M.~Talagrand.
\newblock Regularity of gaussian processes.
\newblock {\em Acta Mathematica}, 159:99--149, 1987.

\bibitem{talagrand2022upper}
M.~Talagrand.
\newblock {\em Upper and lower bounds for stochastic processes: decomposition
  theorems}, volume~60.
\newblock Springer Nature, 2022.

\bibitem{tikhomirov2018sample}
K.~Tikhomirov.
\newblock Sample covariance matrices of heavy-tailed distributions.
\newblock {\em International Mathematics Research Notices},
  2018(20):6254--6289, 2018.

\bibitem{vershynin2018high}
R.~Vershynin.
\newblock {\em High-dimensional probability: An introduction with applications
  in data science}, volume~47.
\newblock Cambridge university press, 2018.

\bibitem{villani2021topics}
C.~Villani.
\newblock {\em Topics in optimal transportation}, volume~58.
\newblock American Mathematical Soc., 2021.

\bibitem{walker1968note}
A.~Walker.
\newblock A note on the asymptotic distribution of sample quantiles.
\newblock {\em Journal of the Royal Statistical Society Series B: Statistical
  Methodology}, 30(3):570--575, 1968.

\end{thebibliography}

\end{document}